\documentclass[oneside,11pt]{amsart}
\usepackage[T1]{fontenc}
\usepackage[latin9]{inputenc}
\usepackage{amsthm}
\usepackage{amssymb}
\usepackage{mathtools}

\newcommand{\N}{\mathbb{N}}
\newcommand{\Z}{\mathbb{Z}}

\newcommand{\D}{\mathbb{D}}

\newcommand{\K}{\mathbb{K}}
\newcommand{\ad}{\mathrm{ad}\,}
\newcommand{\Aff}{\mathrm{Aff}}
\newcommand{\Alt}{\mathbb{A}}

\newcommand{\End}{\mathrm{End}}
\newcommand{\id}{\mathrm{id}}
\newcommand{\Inn}{\mathrm{Inn}}
\newcommand{\NA}{\mathcal{B}}
\newcommand{\Og}{g^G}
\newcommand{\Oh}{h^G}

\newcommand{\supp}{\mathrm{supp}\,}
\newcommand{\Sym}{\mathbb{S}}
\newcommand{\trid}{\triangleright}
\newcommand{\tril}{\triangleleft}

\makeatletter
\def\imod#1{\allowbreak\mkern0mu({\operator@font mod}\,#1)}
\makeatother

\makeatletter
\numberwithin{equation}{section}
\numberwithin{figure}{section}
\numberwithin{table}{section}
\theoremstyle{plain}
\newtheorem{thm}{Theorem}[section]
\newtheorem{lem}[thm]{Lemma}
\newtheorem{cor}[thm]{Corollary}
\newtheorem{pro}[thm]{Proposition}
\newtheorem{notation}[thm]{Notation}
\newtheorem{defn}[thm]{Definition}

\theoremstyle{remark}
\newtheorem{rem}[thm]{Remark}
\newtheorem{exa}[thm]{Example}

\makeatother

\begin{document}

\title[Nichols algebras with finite root system of rank two]{Nichols algebras over groups\\
with finite root system of rank two II}
\author{I. Heckenberger}
\author{L. Vendramin}
\address{Philipps-Universit\"at Marburg\\ 
FB Mathematik und Informatik \\
Hans-Meerwein-Stra\ss e\\
35032 Marburg, Germany}
\email{heckenberger@mathematik.uni-marburg.de}
\email{lvendramin@dm.uba.ar}
\thanks{
Istv\'an Heckenberger was supported by German Research Foundation via a
Heisenberg professorship.  Leandro Vendramin was supported by Conicet and the
Alexander von Humboldt Foundation.
}

\begin{abstract}
	We classify all non-abelian groups $G$ such that there exists a pair $(V,W)$
	of absolutely simple Yetter-Drinfeld modules over $G$ such that the Nichols
	algebra of the direct sum of $V$ and $W$ is finite-dimensional under two
	assumptions: the square of the braiding between $V$ and $W$ is not the
	identity, and $G$ is generated by the support of $V$ and $W$.  As a
	corollary, we prove that the dimensions of such $V$ and $W$ are at most six.
	As a tool we use the Weyl groupoid of $(V,W)$.
\end{abstract}

\maketitle
\setcounter{tocdepth}{1}

\section*{Introduction}

In the theory of Hopf algebras, deep structure results were achieved
since the introduction of the Lifting Method of Andruskiewitsch and
Schneider \cite{MR1659895}. The aim of the method is to classify
(finite-dimensional) pointed Hopf algebras. The idea of it is to
generalize Lusztig's approach to quantum groups \cite{MR2759715}.

The Lifting Method is based on the understanding of the structure theory of
certain braided Hopf algebras which are known as Nichols algebras. In Lusztig's
setting this is the algebra $\mathbf{f}$, also known as
$U_q(\mathfrak{n}_+)$.  Motivated by the first classification
results of finite-dimensional Nichols algebras of diagonal type
\cite{MR1780094}\cite{MR1632802}, a complete solution was obtained by the first
author \cite{MR2462836}. The tool for the latter classification was the Weyl
groupoid and the root system of a Nichols algebra of diagonal type, which was
discovered in \cite{MR2207786} using the theory of Lyndon words and PBW bases
\cite{MR1763385}. The Weyl groupoid was also used by Angiono to determine the
defining relations of finite-dimensional Nichols algebras of diagonal type
\cite{Ang}. These results have far reaching consequences in the theory of Hopf
algebras such as the classification of finite-dimensional pointed Hopf algebras
with abelian coradical of order coprime to $210$ \cite{MR2630042}, and the
proof of the Andruskiewitsch-Schneider conjecture for pointed Hopf algebras
with abelian coradical \cite{Ang_AS}.

In order to understand the structure of Nichols algebras of non-diagonal type,
the Weyl groupoid of a Nichols algebra of diagonal type was generalized further
in several successive papers such as \cite{MR2766176}, \cite{MR2390080},
\cite{MR2734956} and \cite{MR2732989}.  The first applications of this
generalization were powerful enough to study pointed Hopf algebras in some
cases where the coradical is a finite simple group \cite{MR2786171,MR2745542}.
The difficulties towards extensions of these results and the scientific
curiosity ask for a better understanding of finite-dimensional Nichols algebras
of semisimple Yetter-Drinfeld modules over arbitrary groups.

In \cite{MR2732989}, H.-J.~Schneider and the first author introduced a method
to study the Weyl groupoid of a Nichols algebra over a Hopf algebra with
invertible antipode. The main achievement of the paper was a description of
$(\ad V)^n(W)$ for two Yetter-Drinfeld modules $V,W$ in terms of the braiding.
Also, a family $(\Gamma_n)_{n\ge 2}$ of groups was introduced as candidates
admitting finite-dimensional Nichols algebras, and the finite-dimensional
Nichols algebras over $\Gamma_2$ with finite root system of rank two were
determined.

Roughly speaking, in this paper we prove that any non-abelian group $G$ having
a finite-dimensional Nichols algebra with an irreducible finite root system of
rank two has to be an epimorphic image of $\Gamma_2$, $\Gamma_3$, $\Gamma_4$ or
another group $T$.  As a corollary, we obtain that the dimension of the
subspace of primitive elements of such a Nichols algebra has dimension at most
$12$. For the precise statement we refer to Theorem~\ref{thm:main} and to
Corollary~\ref{cor:main}.  These claims are expected to become very useful in
different ways.  For example, the study of Nichols algebras of tuples of
irreducible Yetter-Drinfeld modules requires usually a good understanding of
the rank two case.  Further, our results combined with the methods in
\cite{MR2786171,MR2745542} can be used to obtain strong restrictions on the
support of an irreducible Yetter-Drinfeld module with finite-dimensional
Nichols algebra over a group.

In order to obtain more precise claims on Nichols algebras over $G$, one has to
perform detailed calculations about $(\ad V)^m(W)$ and $(\ad W)^m(V)$, $m\ge
1$, as in \cite[\S 4]{MR2732989}. These calculations lead then to the
classification of finite-dimensional Nichols algebras with finite root system
of rank two, see \cite{examples, rank2}.

Our method is based on the Weyl groupoid. Let $V,W$ be absolutely simple 
Yetter-Drinfeld modules over $G$ such that the pair $(V,W)$ admits all
reflections and the Weyl groupoid $\mathcal{W}(V,W)$ is finite.  By
\cite[Thm.~3.12, Prop.~3.23]{MR2766176}, this is the case if the
Nichols algebra of $V\oplus W$ is finite-dimensional.  We prove that there
exists an object of $\mathcal{W}(V,W)$ which has a Cartan matrix of finite
type.  Thus we have to analyze the consequences of $(\ad V)^2(W)=0$, $(\ad
W)^4(V)=0$. We obtain restrictions regarding decomposability, size and further
information on $\supp V$ and $\supp W$.
In particular, Theorem~\ref{thm:quandles_and_groups} tells that
for a pair $(V,W)$ of Yetter-Drinfeld modules over $G$ such that $(\ad
V)(W)\not=0$, $(\ad V)^2(W)=0$ and $(\ad W)^4(V)=0$ it is necessary that $\supp
V\cup \supp W$ is isomorphic to one of five quandles, all of size at most six.
Our results are based on
Proposition~\ref{pro:degrees} claiming the non-vanishing of $(\ad V)^{m+1}(W)$
under some assumptions on the structure of $\supp V$ and $\supp W$.  It is an
interesting fact that for this proposition and for many of its consequences we
do not need to assume that $V$ and $W$ have finite support or that their
supports are conjugacy classes. Therefore Proposition~\ref{pro:degrees} and its
consequences can also be used to deal with Nichols algebras of arbitrary tuples
of irreducible Yetter-Drinfeld modules over groups.

The structure of the paper is as follows. First we recall some facts on groups
with abelian centralizers, quandles and their enveloping groups in
Sections~\ref{section:preliminaries} and \ref{section:groups}.  In
Section~\ref{section:Weylgroupoid} we prove with
Corollary~\ref{cor:cartan_type} that connected Weyl groupoids of rank two
admitting a finite irreducible root system have an object with a Cartan matrix
of finite type. Section~\ref{section:Nichols} is devoted to the study of
Nichols algebras over groups. After discussing some technicalities,
we formulate our main results,
Theorem~\ref{thm:main} and Corollary~\ref{cor:main}.
In Section~\ref{section:proof} we
give a step-by-step proof of Theorem~\ref{thm:quandles_and_groups}.

\section{Preliminaries}
\label{section:preliminaries}

\subsection{Groups with abelian centralizers}

Recall from \cite{MR0360813} that a group has \emph{abelian centralizers} if
the centralizer of every non-central element is abelian. The following
definition goes back to Hall \cite{MR0003389}.

\begin{defn}
	\label{defn:isoclinic}
	Let $G$ and $H$ be two groups. We say that $G$ is \emph{isoclinic} to $H$ if
	there exist isomorphisms $\zeta:G/Z(G)\to H/Z(H)$ and $\eta:[G,G]\to [H,H]$
	such that if $g_1,g_2\in G$, $h_1,h_2\in H$, and $\zeta(g_iZ(G))=h_iZ(H)$ for
	$i=1,2$, then $\eta[g_1,g_2]=[h_1,h_2]$.  In this case we write $G\sim H$. 
\end{defn}

It is clear that the relation of isoclinism is an equivalence relation.  The
following lemma is due to Hall \cite[page 134]{MR0003389}.

\begin{lem}
	\label{lem:Hall}
	Let $G$ be a group and $K\vartriangleleft G$. The following hold.
	\begin{enumerate}
		\item\label{it:Hall_1} $G/K\sim G/(K\cap [G,G])$. 
		\item\label{it:Hall_2} If $K\subseteq[G,G]$ and $G\sim H$ for some group
      $H$ via the maps $\zeta $ and $\eta $, then $\eta (K)\vartriangleleft H$
      and	$G/K\sim H/\eta (K)$.
		\end{enumerate}
\end{lem}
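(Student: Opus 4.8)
The plan is to prove both parts by exploiting the defining feature of isoclinism: the commutator of two elements depends only on their cosets modulo the center, so an isoclinism transports commutators, and hence conjugation restricted to commutator subgroups, faithfully. Since the text records that isoclinism is an equivalence relation, I will freely use symmetry.

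For part \ref{it:Hall_1}, set $M=G/(K\cap[G,G])$ and let $\bar K$ denote the image of $K$ in $M$. Since $K\vartriangleleft G$ one has $[K,G]\subseteq K\cap[G,G]$, so $\bar K\le Z(M)$; and because $K\cap[G,G]$ is exactly the subgroup used to form $M$, a short computation gives $\bar K\cap[M,M]=1$. As $M/\bar K\cong G/K$, part \ref{it:Hall_1} reduces to the general assertion that $M\sim M/Z$ whenever $Z\le Z(M)$ satisfies $Z\cap[M,M]=1$. To prove this, let $p\colon M\to M/Z$ be the projection. Its restriction $p|_{[M,M]}\colon[M,M]\to[M/Z,M/Z]$ is surjective with kernel $[M,M]\cap Z=1$, hence an isomorphism $\eta$. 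For the centers, $\{m\in M:[m,M]\subseteq Z\}=Z(M)$, again because $[m,M]\subseteq[M,M]\cap Z=1$ forces $m\in Z(M)$; this identifies $(M/Z)/Z(M/Z)$ with $M/Z(M)$, so $p$ induces an isomorphism $\zeta\colon M/Z(M)\to(M/Z)/Z(M/Z)$. Compatibility of $\zeta$ and $\eta$ is immediate, since both are induced by the single homomorphism $p$, which respects commutators.

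For part \ref{it:Hall_2}, the first step is the conjugation-compatibility of $\eta$: for $c\in[G,G]$, $g\in G$, and any $h\in H$ with $\zeta(gZ(G))=hZ(H)$, I claim $\eta(gcg^{-1})=h\,\eta(c)\,h^{-1}$. Writing $c=\prod_i[a_i,b_i]$ and conjugating, $gcg^{-1}=\prod_i[ga_ig^{-1},gb_ig^{-1}]$; applying the defining property of the isoclinism to each factor, together with the fact that $\zeta$ is a homomorphism (so that if $\alpha_i$ is a $\zeta$-partner of $a_i$ then $h\alpha_ih^{-1}$ is one of $ga_ig^{-1}$), yields the claim after collecting terms. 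Applying this with $c=k\in K$ and using $gkg^{-1}\in K$ gives $h\,\eta(k)\,h^{-1}\in\eta(K)$ for every $k\in K$ and every $h$ in the image of $\zeta$; since $\zeta$ is surjective and central elements conjugate trivially, this proves $\eta(K)\vartriangleleft H$.

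It remains to produce the isoclinism $G/K\sim H/\eta(K)$. Because $K\subseteq[G,G]$ and $\eta(K)\subseteq[H,H]$, we have $[G/K,G/K]=[G,G]/K$ and $[H/\eta(K),H/\eta(K)]=[H,H]/\eta(K)$, and $\eta$ descends to an isomorphism $\bar\eta$ between them. For the central quotients, write $L=\{g\in G:[g,G]\subseteq K\}$ and $M'=\{h\in H:[h,H]\subseteq\eta(K)\}$, so that $(G/K)/Z(G/K)=G/L$ and $(H/\eta(K))/Z(H/\eta(K))=H/M'$. The commutator-compatibility of $(\zeta,\eta)$ shows that $[g,g']\in K$ for all $g'$ if and only if $[h,h']\in\eta(K)$ for all $h'$, i.e.\ $\zeta$ carries $L/Z(G)$ onto $M'/Z(H)$; hence $\zeta$ descends to an isomorphism $\bar\zeta\colon G/L\to H/M'$, and the compatibility of $\bar\zeta$ and $\bar\eta$ follows by passing to quotients. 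I expect the main obstacle to be the bookkeeping in this last step, keeping track of the various preimages and centers, together with the careful proof of conjugation-compatibility, which is the technical engine driving both normality of $\eta(K)$ and the matching of $L$ with $M'$.
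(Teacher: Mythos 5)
The paper offers no proof of this lemma: it is attributed to Hall with only a citation to \cite[page 134]{MR0003389}, so there is no in-paper argument to compare yours against. Your proof is correct and complete in all essentials. In part \eqref{it:Hall_1}, the reduction to the statement that $M\sim M/Z$ whenever $Z\le Z(M)$ and $Z\cap[M,M]=1$ is exactly the right move, and your verifications that $\bar K\le Z(M)$, that $\bar K\cap[M,M]=1$, and that $p^{-1}(Z(M/Z))=Z(M)$ are all sound. In part \eqref{it:Hall_2}, the conjugation-compatibility $\eta(gcg^{-1})=h\,\eta(c)\,h^{-1}$ is indeed the technical engine, and your derivation of it from the multiplicativity of $\zeta$ and the defining property of isoclinism is correct; combined with surjectivity of $\zeta$ and centrality it gives normality of $\eta(K)$. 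The one step you leave implicit is the final compatibility of $\bar\zeta$ and $\bar\eta$: if $\bar\zeta(g_iL)=h_i M'$ then $h_i$ need only agree with a genuine $\zeta$-partner $h_i'$ of $g_i$ up to an element of $M'=\{h:[h,H]\subseteq\eta(K)\}$, so one must check that $[h_1,h_2]\equiv[h_1',h_2']\pmod{\eta(K)}$, which follows from the commutator identities $[ab,c]=a[b,c]a^{-1}[a,c]$ and $[a,bc]=[a,b]\,b[a,c]b^{-1}$ together with normality of $\eta(K)$. That is routine bookkeeping, as you anticipated, and does not affect the correctness of the argument.
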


The following lemma was proved in \cite[Lemma 3.4]{MR0360813}. For completeness
we give a proof in the context of this paper. 

\begin{lem}
	\label{lem:abelian_centralizers}
	Let $G$ and $H$ be groups and assume $G\sim H$. If $G$ has abelian
	centralizers, then $H$ has abelian centralizers.
\end{lem}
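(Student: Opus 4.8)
The plan is to reformulate the abelian-centralizer property purely in terms of vanishing commutators and then transport it across the isoclinism using the compatibility of $\zeta$ and $\eta$. The key reformulation is this: a group $G$ has abelian centralizers if and only if for every non-central $a\in G$ and all $b,c\in G$ with $[a,b]=[a,c]=1$ one has $[b,c]=1$. Indeed, $[a,b]=1$ means precisely $b\in C_G(a)$, so the displayed condition says exactly that every $C_G(a)$ with $a$ non-central is abelian. Phrasing the property this way is what makes it visibly invariant under isoclinism, since isoclinism is by definition the data of compatible isomorphisms on $G/Z(G)$ and $[G,G]$.

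The two facts I would extract from the definition of $G\sim H$ are the following. First, an element $g\in G$ is central if and only if $gZ(G)$ is the trivial coset, if and only if its image $\zeta(gZ(G))$ is trivial (as $\zeta$ is an isomorphism), if and only if any $h\in H$ with $\zeta(gZ(G))=hZ(H)$ is central in $H$; thus $\zeta$ matches non-central elements of $G$ with non-central elements of $H$. Second, whenever $\zeta(g_iZ(G))=h_iZ(H)$ for $i=1,2$, the defining property gives $\eta[g_1,g_2]=[h_1,h_2]$, and since $\eta$ is an \emph{isomorphism} (in particular injective, with $\eta(1)=1$), we obtain the equivalence $[g_1,g_2]=1\iff[h_1,h_2]=1$.

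With these in hand the main argument is a short transport. Let $h\in H$ be non-central and let $h_1,h_2\in C_H(h)$, so that $[h,h_1]=[h,h_2]=1$; the goal is $[h_1,h_2]=1$. Since $\zeta$ is surjective I lift to elements $g,g_1,g_2\in G$ with $\zeta(gZ(G))=hZ(H)$ and $\zeta(g_iZ(G))=h_iZ(H)$. By the first fact $g$ is non-central, and by the second fact $[h,h_1]=1$ and $[h,h_2]=1$ force $[g,g_1]=1$ and $[g,g_2]=1$. Applying the hypothesis that $G$ has abelian centralizers (in the reformulated shape) to the non-central element $g$ yields $[g_1,g_2]=1$, whence $[h_1,h_2]=\eta[g_1,g_2]=1$. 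Therefore $C_H(h)$ is abelian, and $H$ has abelian centralizers.

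There is no substantial obstacle here; the argument is essentially a formal chase once the property is rephrased via commutators. The only points demanding care are the routine verifications that centrality is detected by triviality of the coset and that $\eta$ being a genuine isomorphism (not merely a homomorphism) upgrades $\eta[g_1,g_2]=[h_1,h_2]$ to a full equivalence $[g_1,g_2]=1\iff[h_1,h_2]=1$. Both are immediate from the definitions recalled above, so the proof reduces to the lifting-and-pushing step already described.
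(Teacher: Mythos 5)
Your proof is correct and follows essentially the same route as the paper's: lift $h,h_1,h_2$ through $\zeta$, note that non-centrality is preserved, use injectivity of $\eta$ to transfer $[h,h_i]=1$ to $[g,g_i]=1$, apply the hypothesis on $G$, and push $[g_1,g_2]=1$ back via $\eta$. The commutator reformulation you state up front is exactly what the paper uses implicitly.
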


\begin{proof}
	Let $h\in H\setminus Z(H)$ and let $h_1,h_2\in H^h$. Since $G\sim H$, there
	exist $g,g_1,g_2\in G$ such that $\zeta(gZ(G))=hZ(H)$ and
	$\zeta(g_iZ(G))=h_iZ(H)$ for $i=1,2$. Further $g\not\in Z(G)$ since $h\not\in
	Z(H)$. Then $1=[h,h_i]=\eta[g,g_i]$ and hence $g_i\in G^g$. Therefore
	$1=\eta[g_1,g_2]=[h_1,h_2]$ and $H^h$ is abelian. 
\end{proof}

\subsection{Quandles} 

Recall that a \textit{quandle} is a non-empty set $X$ with a binary operation
$\trid$ such that the map $\varphi_i:X\to X$, $j\mapsto i\trid j$, is
bijective for all $i\in X$, $i\trid(j\trid k)=(i\trid j)\trid(i\trid k)$ for
all $i,j,k\in X$, and $i\trid i=i$ for all $i\in X$. The bijectivity of
$\varphi _i$ can be expressed by the existence of a map $\tril :X\times X\to
X$ such that $(i\trid j)\tril i=j=i\trid (j\tril i)$ for all $i,j\in X$. Then
\begin{align}
  k\trid (i\tril j)=(k\trid i)\tril (k\trid j),\quad 
  (i\tril j)\tril k=(i\tril k)\tril (j\tril k)
  \label{eq:tril}
\end{align}
for all $i,j,k\in X$.
A \emph{crossed set} is a
quandle $X$ such that for all $i,j\in X$, $i\trid j=j$ implies $j\trid i=i$.
Unions of conjugacy classes of a group with the binary operation of conjugation
are examples of crossed sets. 

\begin{notation}\label{not:class} \
	\begin{enumerate}
		\item To describe a finite quandle $X$ we may assume that $X=\{1,\dots,n\}$
			for some $n\in\N$ and then write
			$X:\varphi_1\;\varphi_2\;\cdots\;\varphi_n$ to denote the quandle structure on $X$
			given by $\varphi_1,\dots,\varphi_n$.
		\item Let $G$ be a group and $g\in G$. The quandle structure on the
			conjugacy class of $g$ in $G$ will be denoted by $g^G$.	
		\end{enumerate}
\end{notation}

The \textit{inner group} of a quandle $X$ is the group
$\Inn(X)=\langle\varphi_i:i\in X\rangle$ We say that a quandle $X$ is
\textit{indecomposable} if the inner group $\Inn(X)$ acts transitively on $X$.
Also, $X$ is \textit{decomposable} if it is not indecomposable. 

\begin{rem} 
	\label{rem:smallcrossedsets}
	Crossed sets of size at most three are well-known. If $X$ is a crossed set
	and $1\leq|X|\leq2$ then $X$ is trivial (or commutative), that is,
  $i\trid j=j$ for all
	$i,j\in X$. If $|X|=3$ and $X$ is non-trivial, then $i\trid j=k$ for all
  pairwise different elements $i,j,k\in X$. Hence $X\simeq (12)^{\Sym_3}$.
\end{rem}

\begin{rem}
	\label{rem:uptosix}
    Using the classification of transitive groups of small degree,
    indecomposable quandles of small size were classified (up to isomorphism)
    in \cite{MR2926571}. The list of indecomposable quandles of size $\leq6$
    is:
\begin{align*}
  \{1\} &: \id\\
(12)^{\mathbb{S}_{3}}&:(23)\;(13)\;(12)\\
(123)^{\mathbb{A}_{4}}&:(243)\;(134)\;(142)\;(123)\\
\Aff(5,2)&:(2354)\;(1534)\;(1452)\;(1325)\;(1243)\\
\Aff(5,3)&:(2453)\;(1435)\;(1254)\;(1523)\;(1342)\\
\Aff(5,4)&:(25)(34)\;(13)(45)\;(15)(24)\;(12)(35)\;(14)(23)\\
(12)^{\mathbb{S}_{4}}&:(23)(56)\;(13)(45)\;(12)(46)\;(25)(36)\;(16)(24)\;(15)(34)\\
(1234)^{\mathbb{S}_{4}}&:(2436)\;(1654)\;(1456)\;(1253)\;(2634)\;(1352)
\end{align*}
\end{rem}

Let $X$ be a quandle and let $G_X$ denote its enveloping group 
\[
	G_X=\langle x_i\,|\,i\in X\rangle / (x_ix_j=x_{i\trid j}x_i\text{ for all $i,j\in X$}).
\]
This group is $\Z$-graded with $\deg(x_i)=1$ for all $i\in X$. 

\begin{rem}[Universal property]
	\label{rem:universal_property}
	For any group $G$ and any map $f:X\to G$ satisfying $f(x\triangleright
	y)=f(x)f(y)f(x)^{-1}$ there exists a unique group homomorphism $g:G_X\to G$
	such that $f=g\circ\partial$, where $\partial:X\to G_X$, $i\mapsto x_i$, see
	for example \cite[Lemma 1.6]{MR1994219}.
\end{rem}

Suppose that $X$ is a finite indecomposable quandle.  By \cite[Lemmas 2.17 and
2.18]{MR2803792}, $x_i^{|\varphi _i|}=x_j^{|\varphi _j|}$ for all $i,j\in X$.
This implies that the subgroup  $K=\langle x_i^{|\varphi_i|}: i\in X\rangle$ is
cyclic and central.  The \emph{finite enveloping group} is the finite group
$\overline{G_X}=G_X/K$, see \cite[Lemma 2.19]{MR2803792}.  Let $\pi
:G_X\to\overline{G_X}$ be the canonical surjection. 

A quandle $X$ is \emph{injective} if the map $\partial:X\to G_X$, $i\mapsto
x_i$, is injective.  For example, the group $\overline{G_X}$ can be used to
test indecomposable quandles for injectivity.

\begin{lem}
	\label{lem:injective}
	Let $X$ be a finite indecomposable quandle and let $u\in G_X$. Then the
	following hold.
	\begin{enumerate}
    \item The restriction of $\pi$ to the class $u^{G_X}$
      is a quandle isomorphism.
		\item $X$ is injective if and only if
			$X\xrightarrow{\partial}G_X\xrightarrow{\pi}\overline{G_X}$ is injective. 
	\end{enumerate}
\end{lem}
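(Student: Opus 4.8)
The plan is to exploit the $\Z$-grading of $G_X$, that is, the group homomorphism $\deg\colon G_X\to\Z$ with $\deg(x_i)=1$ for all $i$, which is well defined since every defining relation $x_ix_j=x_{i\trid j}x_i$ is homogeneous of degree two. The crucial observation is that $\ker\pi=K=\langle z\rangle$ with $z=x_i^{|\varphi_i|}$ independent of $i$, and that $\deg(z)=|\varphi_i|\ge 1$. Hence $\deg$ is injective on $K$: an element $z^m\in K$ has degree $m\,|\varphi_i|$, which vanishes only for $m=0$. In particular $K$ is infinite cyclic, and $K$ meets each fixed degree at most once.

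For part~(1), I would first note that $\pi$ is a group homomorphism and therefore respects conjugation, so its restriction to $u^{G_X}$ is a homomorphism of quandles onto the conjugacy class $\pi(u)^{\overline{G_X}}$; surjectivity onto this class is immediate from the surjectivity of $\pi$. It remains to prove injectivity. Suppose $a,b\in u^{G_X}$ satisfy $\pi(a)=\pi(b)$. Then $b^{-1}a\in\ker\pi=K$. Since $a$ and $b$ are both conjugate to $u$ and conjugation preserves degree, we have $\deg(a)=\deg(b)$, whence $\deg(b^{-1}a)=0$. By the injectivity of $\deg$ on $K$ this forces $b^{-1}a=1$, i.e.\ $a=b$.

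For part~(2), the implication that injectivity of $\pi\circ\partial$ forces injectivity of $\partial$ is trivial, since a composition can only be injective if its first factor is. For the converse I would use indecomposability. The relation $x_ix_jx_i^{-1}=x_{i\trid j}$ shows that conjugation by $x_i$ acts on $\partial(X)=\{x_j:j\in X\}$ exactly as $\varphi_i$ acts on $X$, and likewise $x_i^{-1}$ realizes $\varphi_i^{-1}$. Since $X$ is indecomposable, $\Inn(X)$ acts transitively, so for every $j\in X$ there is a word in the $\varphi_k^{\pm1}$ carrying a fixed $i_0$ to $j$; lifting this word to $G_X$ yields $w$ with $wx_{i_0}w^{-1}=x_j$. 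Thus $\partial(X)$ lies inside the single conjugacy class $x_{i_0}^{G_X}$, and part~(1) shows $\pi$ is injective there. Consequently $\pi\circ\partial$ is injective if and only if $\partial$ is.

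The main point where care is needed is the injectivity in part~(1): everything hinges on the grading forcing $K$ to meet each fixed degree at most once, together with the fact that conjugate elements share the same degree. Once this is in place, part~(2) becomes a short application of (1) via the observation that indecomposability collapses $\partial(X)$ into one conjugacy class.
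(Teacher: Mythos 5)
Your proof is correct and follows essentially the same route as the paper: part (1) is the paper's argument verbatim (conjugate elements share the same $\Z$-degree, and the grading kills any nontrivial element of $\ker\pi=\langle x_1^{|\varphi_1|}\rangle$ of degree zero), and part (2) is the intended deduction from (1), for which you correctly supply the detail that indecomposability places all the $x_j$ in a single conjugacy class of $G_X$. No gaps.
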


\begin{proof}
	Let $v\in G_{X}$ and assume that $u$ and $v$ are conjugate. Then $u$ and $v$
	have the same $\mathbb{Z}$-degree in $G_{X}$. Now, if $\pi u=\pi v$, then
	$u=vx_{1}^{m|\varphi _1|}$ for some $m\in\mathbb{Z}$.  The
	$\mathbb{Z}$-graduation of $G_{X}$ implies that $m=0$ and hence $u=v$. Thus
	(1) is proved. Now (2) follows from (1).  
\end{proof}

\begin{cor}
	\label{cor:GX_classes}
	Let $X$ be a finite indecomposable quandle and
	\[
		M=\max\{|\mathcal{O}|:\mathcal{O}\text{ is a conjugacy class of }\overline{G_{X}}\}.
	\]
	Then every conjugacy class of $G_{X}$ has at most $M$ elements. 
\end{cor}

\begin{lem} 
	\label{lem:GXisocl}
	Let $X$ be a finite indecomposable quandle. Then $G_X\sim\overline{G_X}$. 
\end{lem}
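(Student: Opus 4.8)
The plan is to deduce the statement from Lemma~\ref{lem:Hall}\eqref{it:Hall_1}, which gives $G_X/K\sim G_X/(K\cap[G_X,G_X])$ for the central cyclic subgroup $K=\langle x_i^{|\varphi_i|}:i\in X\rangle$. Since $\overline{G_X}=G_X/K$ by definition, it suffices to show that $K\cap[G_X,G_X]$ is trivial, for then $G_X/(K\cap[G_X,G_X])=G_X$ and the lemma yields $\overline{G_X}\sim G_X$; by symmetry of isoclinism this is $G_X\sim\overline{G_X}$.

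First I would analyze the abelianization of $G_X$ using the $\Z$-grading $\deg(x_i)=1$. In $G_X/[G_X,G_X]$ each defining relation $x_ix_j=x_{i\trid j}x_i$ becomes $x_j=x_{i\trid j}$, so the image of $x_j$ depends only on the $\Inn(X)$-orbit of $j$. As $X$ is indecomposable, $\Inn(X)$ acts transitively, hence all the $x_i$ have a common image in the abelianization and $G_X/[G_X,G_X]\cong\Z$, generated by that image. The natural map $G_X/[G_X,G_X]\to\Z$ induced by $\deg$ is then an isomorphism, so $[G_X,G_X]$ is exactly the subgroup of degree-zero elements.

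Next I would compute the intersection. Using the stated fact that $x_i^{|\varphi_i|}=x_j^{|\varphi_j|}$ for all $i,j$, the group $K$ is generated by a single central element $c=x_i^{|\varphi_i|}$, and a typical element of $K$ is $c^m$ with $\deg(c^m)=m|\varphi_i|$. If $c^m\in[G_X,G_X]$, then $\deg(c^m)=0$ by the previous paragraph, forcing $m|\varphi_i|=0$ and hence $m=0$ (as $|\varphi_i|\ge1$). Therefore $K\cap[G_X,G_X]=\{1\}$, and applying Lemma~\ref{lem:Hall}\eqref{it:Hall_1} finishes the argument.

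The only delicate point is the identification of $[G_X,G_X]$ with the degree-zero part, i.e.\ that the abelianization collapses to $\Z$; this is precisely where indecomposability of $X$ is used, since without transitivity of $\Inn(X)$ the abelianization could be free of higher rank and the intersection $K\cap[G_X,G_X]$ need not vanish. Everything else is a direct consequence of the $\Z$-grading and of Hall's lemma, which is assumed.
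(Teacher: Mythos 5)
Your proof is correct and follows essentially the same route as the paper: apply Hall's lemma with $K=\ker\pi$ and use the $\Z$-grading to see that $K\cap[G_X,G_X]=1$. The only difference is that you prove the stronger identification of $[G_X,G_X]$ with the full degree-zero subgroup, whereas the paper only needs the trivial inclusion $[G_X,G_X]\subseteq\deg^{-1}(0)$ (indecomposability enters rather through the fact that $K$ is generated by the single element $x_1^{|\varphi_1|}$ of positive degree).
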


\begin{proof}
	Let $\pi:G_X\to\overline{G_X}$ be the canonical surjection.  Since the
	elements of $[G_X,G_X]$ have degree zero, $\ker\pi\cap[G_X,G_X]=\langle
  x_1^{|\varphi_1|}\rangle\cap[G_X,G_X]=1$. Then the claim
  follows from Lemma \ref{lem:Hall}\eqref{it:Hall_1} with $G=G_X$,
  $K=\ker \pi$. 
\end{proof}

We conclude the subsection on quandles with two technical lemmas needed for the
proof of our main result.

\begin{lem} 
	\label{lem:invsubset}
  Let $X$ be a crossed set, $Y\subseteq X$ a subset, and
  $$ C(Y)=\{i\in X\,|\,\forall j\in Y: i\trid j=j\}. $$
  Assume that $Y\cup C(Y)=X$. Then $X\trid Y=Y$.
\end{lem}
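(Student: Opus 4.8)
The plan is to establish the nontrivial inclusion $X\trid Y\subseteq Y$; the reverse inclusion is immediate, since $j=j\trid j\in X\trid Y$ for each $j\in Y$ by idempotency. So I fix $i\in X$ and $j\in Y$ and aim to show $i\trid j\in Y$. The hypothesis $Y\cup C(Y)=X$ splits this into two cases according to whether $i\in C(Y)$ or $i\in Y$. If $i\in C(Y)$, then $i\trid j=j\in Y$ directly from the definition of $C(Y)$, and nothing further is needed.

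The remaining case $i\in Y$ is where I expect the real work to lie, and it is the only place the crossed set hypothesis enters. The first step is to record that $Y$ and $C(Y)$ fix each other: for $c\in C(Y)$ and $j'\in Y$ the definition of $C(Y)$ gives $c\trid j'=j'$, whence the crossed set axiom yields $j'\trid c=c$. I then argue by contradiction. Suppose $i\trid j\notin Y$; the covering hypothesis forces $i\trid j\in C(Y)$, so applying the reciprocity just derived with $c=i\trid j$ and $j'=i\in Y$ gives $i\trid(i\trid j)=i\trid j$. Since $\varphi_i$ is a bijection, this forces $i\trid j=j$, which lies in $Y$, contradicting $i\trid j\notin Y$. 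Hence $i\trid j\in Y$ in this case as well.

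The crux, and the step I would watch most carefully, is this reciprocity: the crossed set axiom promotes the one-sided statement that elements of $C(Y)$ fix $Y$ to its converse that elements of $Y$ fix $C(Y)$, and it is precisely this symmetry, together with the injectivity of $\varphi_i$, that prevents $i\trid j$ from escaping into $C(Y)\setminus Y$. It is worth noting that neither the self-distributivity law nor the companion operation $\tril$ appears to be needed: the argument runs entirely on idempotency, the bijectivity of the maps $\varphi_i$, and the defining implication of a crossed set.
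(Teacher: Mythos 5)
Your proof is correct, but it runs along a genuinely different and more economical track than the paper's. The paper proves the lemma globally: using self-distributivity it shows $C(Y)\trid C(Y)\subseteq C(Y)$, combines this with $Y\trid C(Y)=C(Y)$ to get $X\trid C(Y)\subseteq C(Y)$, repeats the argument with the companion operation $\tril$ to upgrade the inclusion to the equality $X\trid C(Y)=C(Y)$ (hence $X\trid (X\setminus C(Y))=X\setminus C(Y)$), and then treats the overlap $Y\cap C(Y)$ separately before assembling the conclusion. You instead argue pointwise: the only nontrivial case is $i,j\in Y$, where the assumption $i\trid j\in C(Y)$ together with the crossed-set reciprocity applied to the pair $(i\trid j,\,i)$ gives $i\trid(i\trid j)=i\trid j$, and injectivity of $\varphi_i$ forces $i\trid j=j\in Y$, a contradiction. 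Your key step is sound, and your observation that neither self-distributivity nor $\tril$ is needed is accurate, so your argument is formally weaker in its hypotheses. What the paper's longer route buys is the additional structural information that each $\varphi_p$ restricts to a bijection of $C(Y)$ and of its complement, i.e.\ that $C(Y)$ and $X\setminus C(Y)$ are $\Inn(X)$-invariant; this is closer in spirit to how the lemma is deployed in Corollary~\ref{cor:size_suppV}, though the stated conclusion $X\trid Y=Y$ is all that is formally invoked there, so your proof suffices for the paper's purposes.
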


\begin{proof}
  Let $p,q\in C(Y)$ and $i\in Y$. Then
  $$ (p\trid q)\trid i=p\trid (q\trid i)=p\trid i=i $$
  and hence $C(Y)\trid C(Y)\subseteq C(Y)$. Since $Y\trid C(Y)=C(Y)$
  by definition of $C(Y)$
  and since $X=Y\cup C(Y)$, we conclude that $X\trid C(Y)\subseteq C(Y)$. Since
  $i\tril p=i\tril q=i$, we obtain similarly that $C(Y)\tril C(Y)\subseteq
  C(Y)$ and that $C(Y)\tril X\subseteq C(Y)$. Hence $X\trid C(Y)=C(Y)$ and
  $X\trid (X\setminus C(Y))=X\setminus C(Y)$.

  Let $k\in Y\cap C(Y)$. Then $i\trid k=k$ for all $i\in Y$ since $k\in C(Y)$,
  and $i\trid k=k$ for all $i\in C(Y)$ since $k\in Y$.
  Since $Y\cup C(Y)=X$, we conclude
  that $X\trid \{k\}=\{k\}$ for all $k\in Y\cap C(Y)$.
  This and the first paragraph imply that $X\trid Y=Y$.
\end{proof}

\begin{lem}
	\label{lem:about_quandles}
	Let $X=Y_1\cup Y_2$ be a finite quandle, where $Y_1$ and $Y_2$ are
  disjoint
	$\Inn(X)$-orbits. Assume that there exists an isomorphism of quandles $g:Y_1\to
	Y_2$ and that $Y_1$ is commutative. Then $X$ is isomorphic to the quandle
	structure on $\{1,\dots,2n\}$ given by
	\begin{equation}
		\label{eq:permutations}
		\varphi_i=\begin{cases}
			(n+1\;\cdots\;2n) & \text{if $1\leq i\leq n$,}\\
			(1\;\cdots \;n) & \text{if $n+1\leq i\leq 2n$.}
		\end{cases}
	\end{equation}
\end{lem}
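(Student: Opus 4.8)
The plan is to determine all the permutations $\varphi_i$ explicitly and then to rename the elements of $Y_1$ and $Y_2$ conveniently. Put $n=|Y_1|=|Y_2|$; these are equal because $g$ is a bijection. Since $Y_1$ and $Y_2$ are $\Inn(X)$-orbits, every $\varphi_i\in\Inn(X)$ maps each of them onto itself. The first observation is that both orbits are commutative: this holds for $Y_1$ by hypothesis, and since $g$ is a quandle isomorphism, $g(i)\trid g(j)=g(i\trid j)=g(j)$ shows it for $Y_2$. Consequently $\varphi_i|_{Y_1}=\id$ for all $i\in Y_1$ and $\varphi_i|_{Y_2}=\id$ for all $i\in Y_2$, so each $\varphi_i$ with $i\in Y_1$ is determined by $\alpha_i:=\varphi_i|_{Y_2}\in\Sym(Y_2)$, and each $\varphi_i$ with $i\in Y_2$ by $\beta_i:=\varphi_i|_{Y_1}\in\Sym(Y_1)$.

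The heart of the proof is to show that $\alpha_i$ does not depend on $i\in Y_1$, and likewise $\beta_i$ does not depend on $i\in Y_2$. For this I would use the identity $\varphi_i\varphi_j\varphi_i^{-1}=\varphi_{i\trid j}$, a restatement of the quandle axiom $i\trid(j\trid k)=(i\trid j)\trid(i\trid k)$. Taking $i\in Y_2$, $j\in Y_1$ and restricting this identity to $Y_2$, where $\varphi_i$ acts trivially, gives $\alpha_j=\alpha_{i\trid j}=\alpha_{\beta_i(j)}$. Thus $j\mapsto\alpha_j$ is constant on the orbits of $\langle\beta_i:i\in Y_2\rangle$ acting on $Y_1$. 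But this group is exactly the image of $\Inn(X)$ under restriction to $Y_1$ (the generators indexed by $Y_1$ act trivially there), so it is transitive on $Y_1$ because $Y_1$ is a single $\Inn(X)$-orbit. Hence all $\alpha_j$ coincide; write $\alpha$ for the common value. The symmetric argument produces a single $\beta$ with $\beta_i=\beta$ for all $i\in Y_2$.

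Transitivity then pins down the cycle type. Since $\Inn(X)$ acts transitively on $Y_2$ and its image in $\Sym(Y_2)$ is the cyclic group $\langle\alpha\rangle$, the element $\alpha$ acts transitively on the $n$-element set $Y_2$, hence is an $n$-cycle; similarly $\beta$ is an $n$-cycle on $Y_1$. At this point the operation is completely described: $i\trid j=j$ inside each orbit, $i\trid j=\alpha(j)$ for $i\in Y_1$, $j\in Y_2$, and $i\trid j=\beta(j)$ for $i\in Y_2$, $j\in Y_1$, in each case independently of the chosen $i$. I would conclude by labelling $Y_1$ as $\{1,\dots,n\}$ so that $\beta=(1\;\cdots\;n)$ and $Y_2$ as $\{n+1,\dots,2n\}$ so that $\alpha=(n+1\;\cdots\;2n)$; these two relabellings are independent, since they concern disjoint sets, and under them $X$ becomes exactly the quandle \eqref{eq:permutations}.

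The main obstacle is the middle step, the coincidence of all the $\alpha_i$ (and all the $\beta_i$): a priori the maps $\varphi_i$, $i\in Y_1$, could act by different permutations on $Y_2$, and excluding this is precisely what forces the rigid shape \eqref{eq:permutations}. The mechanism is the conjugation relation combined with the transitivity of $\Inn(X)$ on each orbit; the point requiring care is to check that $j\mapsto\alpha_j$ is genuinely constant along the \emph{whole} orbit, that is along the group generated by all the $\beta_i$, rather than merely invariant under the individual maps.
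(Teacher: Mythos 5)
Your proof is correct and follows essentially the same route as the paper: both arguments show that $\varphi_{j\trid i}=\varphi_i$ (you via restricting the conjugation identity to $Y_2$ where $\varphi_j$ acts trivially, the paper via the disjointness of the supports of $\varphi_i$ and $\varphi_j$), then use transitivity of $\Inn(X)$ on each orbit to conclude that all the $\varphi_i$ with $i\in Y_1$ coincide and give an $n$-cycle on $Y_2$, and symmetrically. The care you take over constancy along the whole orbit (not just invariance under single generators) is exactly the point the paper's phrase ``since $Y_1$ is an $\Inn(X)$-orbit'' is covering.
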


\begin{proof}
	Without loss of generality we may assume that $Y_1=\{1,\dots,n\}$ and
	$Y_2=\{n+1,\dots,2n\}$. For all $i\in Y_1$ and $j\in Y_2$ the permutations
	$\varphi_i$ and $\varphi_j$ commute, since $\supp\varphi_i\subseteq Y_2$ and
	$\supp\varphi_j\subseteq Y_1$. Further,
  $$\varphi_{j\triangleright i}=\varphi_j\varphi_i\varphi_j^{-1}=\varphi_i.$$
  As $\varphi _{k\trid i}=\varphi _i$ for all $k\in Y_1$ by the commutativity
  of $Y_1$, we conclude that $\varphi_i=\varphi_l$ for all $i,l\in Y_1$ since
  $Y_1$ is an $\Inn (X)$-orbit.

	Since $Y_1$ and $Y_2$ are isomorphic, $\varphi_i=\varphi_l$ for all $i,l\in
  Y_2$. Since $Y_1$ is an $\Inn(X)$-orbit, it is a $\varphi _{n+1}$-orbit and
  hence for all $j\in Y_2$ the
	permutation $\varphi_j$ is a cycle of length $|Y_1|$. This implies the
  claim.
\end{proof}
 
\section{Groups with finite-dimensional Nichols algebras}
\label{section:groups}

Here we introduce the groups that realize the examples of decomposable quandles
which are essential for our classification. These quandles are:
\begin{align*}
Z_T^{4,1} &:\;(243)\;(134)\;(142)\;(123)\;\id\\
Z_2^{2,2} &:\;(24)\;(13)\;(24)\; (13)\\
Z_3^{3,1} &:\;(23)\;(13)\;(12)\; \id\\
Z_3^{3,2} &:\;(23)(45)\;(13)(45)\;(12)(45)\;(123)\;(132)\\
Z_4^{4,2} &:\;(24)(56)\;(13)(56)\;(24)(56)\;(13)(56)\;(1234)\;(1432)
\end{align*}

First we study the dimension of group representations.

\begin{lem} 
	\label{lem:dim_asmod_bound}
	\label{lem:dim_asmod_specbound}
	Let $G$ be a group, $x\in G$, and $d\in \N $.  Suppose that $[G:G^x]$ is
	finite.  If $\dim _\K V\le d$ for any finite-dimensional absolutely simple
	$\K G^x$-module $V$, then $\dim _\K U\le d[G:G^x]$ for any finite-dimensional
	absolutely simple $\K G$-module $U$. In particular, $\dim_{\K}U\leq[G:G^x]$
	if $G^x$ is abelian.
\end{lem}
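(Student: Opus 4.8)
The plan is to pass to the algebraic closure of $\K$, where ``absolutely simple'' becomes simply ``simple,'' and then to realize $U$ as a quotient of a module induced from $G^x$ via Frobenius reciprocity, controlling the dimension by a single dimension count. Throughout write $H=G^x$ and $n=[G:H]$, which is finite by hypothesis.

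Since $U$ is absolutely simple, $\overline{U}:=U\otimes_\K\overline{\K}$ is a simple $\overline{\K}G$-module with $\dim_{\overline{\K}}\overline{U}=\dim_\K U$, so it suffices to bound $\dim_{\overline{\K}}\overline{U}$. The restriction $\overline{U}|_H$ is a nonzero finite-dimensional $\overline{\K}H$-module and hence contains a simple submodule $S$, which over the algebraically closed field $\overline{\K}$ is automatically absolutely simple. The inclusion $S\hookrightarrow\overline{U}|_H$ is a nonzero element of $\mathrm{Hom}_{\overline{\K}H}(S,\mathrm{Res}_H\overline{U})$, so by Frobenius reciprocity (Nakayama) it corresponds to a nonzero homomorphism $\mathrm{Ind}_H^G S=\overline{\K}G\otimes_{\overline{\K}H}S\to\overline{U}$. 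As $\overline{U}$ is simple this map is surjective, whence
\[
  \dim_\K U=\dim_{\overline{\K}}\overline{U}\le\dim_{\overline{\K}}\bigl(\overline{\K}G\otimes_{\overline{\K}H}S\bigr)=n\,\dim_{\overline{\K}}S .
\]
Thus the entire statement reduces to the single inequality $\dim_{\overline{\K}}S\le d$.

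The crux, and the step I expect to be the main obstacle, is exactly this last bound, because the hypothesis controls absolutely simple $\K H$-modules while $S$ lives over $\overline{\K}$, and over a non-closed field a simple module need not be absolutely simple (the gap is measured by the endomorphism division algebra $\End_{\K H}(V)$, i.e.\ by Schur indices). The case actually needed for the applications is clean: when $H=G^x$ is abelian, every simple $\overline{\K}H$-module is one-dimensional, so $\dim_{\overline{\K}}S=1$, necessarily $d\ge 1$, and the displayed inequality already gives $\dim_\K U\le n=[G:H]$, which is the ``in particular'' clause. For the general bound one observes that $S$ is a constituent of $\overline{U}|_H=(U|_H)\otimes_\K\overline{\K}$, hence of $V\otimes_\K\overline{\K}$ for some simple $\K H$-constituent $V$ of $U|_H$, with $\dim_{\overline{\K}}S\le\dim_\K V$; the task is then to show $\dim_{\overline{\K}}S\le d$ despite $V$ not being absolutely simple. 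If $\K$ is algebraically closed this is immediate, since $d$ is unchanged and $S\cong V$; otherwise one must use that $\overline{U}$ descends to $\K$ together with the absolute simplicity of $U$ to exclude constituents with $\dim_{\overline{\K}}S>d$. Once this endomorphism-algebra point is settled, the display yields $\dim_\K U\le d\,n=d[G:G^x]$, completing the proof.
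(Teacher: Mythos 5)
You follow the same route as the paper: the published proof likewise reduces to an algebraically closed field, picks a simple $\K G^x$-submodule $V$ of $U$, and notes that $U=\K G\,V$ is an epimorphic image of $\K G\otimes_{\K G^x}V$, whose dimension is $[G:G^x]\dim_\K V$; your Frobenius-reciprocity phrasing of the surjection $\overline{\K}G\otimes_{\overline{\K}G^x}S\twoheadrightarrow\overline U$ is the identical dimension count. The obstacle you isolate at the end is genuine: the hypothesis bounds \emph{absolutely simple $\K G^x$-modules}, and a simple $\overline{\K}G^x$-module need not arise by scalar extension from one of these, so the bound $d$ does not automatically survive the passage to $\overline{\K}$ (for $G^x\simeq Q_8$ and $\K=\mathbb{Q}$ every absolutely simple $\K G^x$-module is one-dimensional, while $\overline{\K}G^x$ has a two-dimensional simple module). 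But you should be aware that the paper's own proof has exactly the same soft spot: it opens with ``we may assume that $\K $ is algebraically closed'' and never justifies why the hypothesis transfers, then applies the hypothesis to a simple submodule of $U|_{G^x}$, which is only legitimately ``absolutely simple'' after that unexamined reduction. So your proposal is not weaker than the published argument; it is the published argument with the delicate step made visible rather than elided. For what the lemma is actually used for in the paper --- only the final clause with $G^x$ abelian, in Lemmas \ref{lem:quotients_of_T} and \ref{lem:quotients_of_Gn} --- your treatment is complete: there $d=1$ holds over every extension of $\K$, and the induced-module bound gives $\dim_\K U\le[G:G^x]$ with no further input.
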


\begin{proof}
	We may assume that $\K $ is algebraically closed.  Let $U$ be a simple $\K
	G$-module with $\dim _\K U<\infty $ and let $V$ be a simple $\K
	G^x$-submodule of $U$.  Then $U=\K G V$ is an epimorphic image of $\K
	G\otimes _{\K G^x} V$ and $\dim _\K V\le d$, and hence $\dim _\K U\le
	d[G:G^x]$.  Now the second claim follows, as finite-dimensional absolutely
	simple modules of abelian groups are one-dimensional.
\end{proof}

\subsection{The group $T$}

Let us consider the group
\[
	T=\langle z\rangle\times\langle x_1,x_2,x_3,x_4\mid
  x_ix_j=x_{\varphi_i(j)}x_i,\quad i,j\in \{1,2,3,4\}\rangle,
\]
where $\{\varphi_i:1\leq i\leq4\}$ is the set of permutations that defines
$(123)^{\Alt_4}$. This group is not nearly abelian, see \cite[Definition
3.1]{MR2732989}, since the commutator subgroup $[T,T]$ is not cyclic. For
example $[x_1,x_2]$ and $[x_1,x_3]$ do not commute.
(One can prove that $[T,T]\simeq Q_8$,
the quaternion group of eight elements.)

\begin{exa}
\label{exa:T+1}
	Let $Z_T^{4,1}=x_1^T\cup z^T$, see Notation~\ref{not:class}(2).  Then $T$ is
	isomorphic to the enveloping group of $Z_T^{4,1}$. 
\end{exa}

\begin{lem}
	\label{lem:quotients_of_T}
	Let $G$ be an epimorphic image of $T$. Then the following hold.
	\begin{enumerate}
		\item\label{it:quotients_of_T}
			$G$ has abelian centralizers.
		\item\label{it:classes_of_T}
			Every conjugacy class of $G$ has at most six elements.
		\item\label{it:degrees_of_T}
			Every finite-dimensional absolutely simple $\K G$-module 
			has dimension at most four.
	\end{enumerate}
\end{lem}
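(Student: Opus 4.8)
The starting point is the description $T=\langle z\rangle\times G_X$, where $G_X$ is the enveloping group of the indecomposable quandle $X=(123)^{\Alt_4}$ and $z$ is central. Since $z$ is central we have $[T,T]=[G_X,G_X]$, which by assumption is isomorphic to the finite group $Q_8$. The guiding idea for all three parts is that the finiteness of $[T,T]$ forces every quotient of $T$ to be isoclinic to one of finitely many finite groups, which can then be inspected directly; the transfer of the abelian-centralizer property along isoclinism is supplied by Lemma~\ref{lem:abelian_centralizers}. I also record that, as $\langle z\rangle$ is a central direct factor, the centralizer of $(z^a,g)$ is $\langle z\rangle\times G_X^{\,g}$, so abelian centralizers and conjugacy class sizes of $T$ (and of its quotients, after the same splitting) are governed entirely by the factor $G_X$.

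For \eqref{it:quotients_of_T}, let $G=T/N$. By Lemma~\ref{lem:Hall}\eqref{it:Hall_1} we have $G\sim T/L$ with $L=N\cap[T,T]$, a normal subgroup of $T$ contained in $Q_8$. Conjugation by the $x_i$ permutes the three cyclic subgroups of order four of $Q_8$ without fixing any of them, reflecting the $\Alt_4$-structure of $X$, so the only $T$-invariant subgroups of $Q_8$ are $1$, $Z(Q_8)$ and $Q_8$; hence there are only three isoclinism types to consider. Each $T/L$ has finite commutator subgroup and finite central quotient, hence is isoclinic to a finite group, and I would verify the abelian-centralizer property on these finitely many finite representatives and then pull it back to $G$ via Lemma~\ref{lem:abelian_centralizers}. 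The main obstacle lies precisely here: having abelian centralizers is not inherited by quotients in general, so the reduction to finite groups through isoclinism, rather than a naive quotient argument, is unavoidable.

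For \eqref{it:classes_of_T}, I would argue directly. In $T=\langle z\rangle\times G_X$ the conjugacy class of $(z^a,g)$ is $\{z^a\}\times g^{G_X}$, so its cardinality equals $|g^{G_X}|$; by Corollary~\ref{cor:GX_classes} this is at most the maximal size $M$ of a conjugacy class of the finite group $\overline{G_X}$, and a direct inspection of $\overline{G_X}$ gives $M\le 6$. Since the image of a conjugacy class under a surjection is again a conjugacy class, the class sizes of a quotient $G=T/N$ do not exceed those of $T$, and \eqref{it:classes_of_T} follows.

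Finally, for \eqref{it:degrees_of_T} I would apply Lemma~\ref{lem:dim_asmod_bound} to the element $x_1$. The universal property (Remark~\ref{rem:universal_property}) yields a homomorphism $G_X\to\Alt_4$ sending $x_i$ to the corresponding $3$-cycle, so $x_1,\dots,x_4$ are pairwise distinct, and since the orbit of $1$ under $\Inn(X)$ is all of $X$ one gets $x_1^T=\{x_1,x_2,x_3,x_4\}$, that is, $|x_1^T|=4$. Now let $G=T/N$ and let $\bar x_1$ be the image of $x_1$. If $\bar x_1$ is central, then all $\bar x_i$ coincide with $\bar x_1$ (conjugate central elements are equal), so $G=\langle\bar z,\bar x_1\rangle$ is abelian and its finite-dimensional absolutely simple modules are one-dimensional. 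Otherwise $\bar x_1$ is non-central, so $G^{\bar x_1}$ is abelian by \eqref{it:quotients_of_T}, and $[G:G^{\bar x_1}]=|\bar x_1^G|\le|x_1^T|=4$; hence $\dim_\K U\le 4$ by the last assertion of Lemma~\ref{lem:dim_asmod_bound}. The delicate point here is the sharp bound $4$ rather than $6$: it rests on using the small class $x_1^T$ of size exactly four, together with the case distinction according to whether $x_1$ remains non-central in $G$.
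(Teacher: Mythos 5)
Your proposal is correct and follows essentially the same route as the paper: an isoclinism reduction via Lemma~\ref{lem:Hall}\eqref{it:Hall_1} to finitely many finite groups for \eqref{it:quotients_of_T}, Corollary~\ref{cor:GX_classes} for \eqref{it:classes_of_T}, and Lemma~\ref{lem:dim_asmod_bound} applied to the four-element conjugacy class of $x_1$ together with \eqref{it:quotients_of_T} for \eqref{it:degrees_of_T}. The only real differences are that in \eqref{it:quotients_of_T} you leave the finite representatives unnamed and appeal to a general stem-group fact, whereas the paper pins them down as quotients of $\overline{G_X}\simeq\mathbf{SL}(2,3)$ via Lemmas~\ref{lem:GXisocl} and \ref{lem:Hall}\eqref{it:Hall_2}, while in \eqref{it:degrees_of_T} you are more careful than the paper about the degenerate case where the image of $x_1$ is central.
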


\begin{proof}
  By Lemma~\ref{lem:abelian_centralizers}, we may replace $G$ by a group
  which is isoclinic to $G$. Let $K\vartriangleleft T$ with $G=T/K$. By
  Lemma~\ref{lem:Hall}\eqref{it:Hall_1} we may assume that $K\subseteq [T,T]$.
  Let $X=(123)^{\Alt_4}$. Since $T\sim G_X$ and $G_X\sim \overline{G_X}$ by
  Lemma~\ref{lem:GXisocl}, $T/K\sim \overline{G_X}/L$
  for some $L\vartriangleleft \overline{G_X}$ by
  Lemma~\ref{lem:Hall}\eqref{it:Hall_2}. Now $\overline{G_X}\simeq
  \mathbf{SL}(2,3)$ and the only non-trivial normal subgroups of $\mathbf{SL}(2,3)$
  are its commutator subgroup and its center. Since
  all quotients of $\mathbf{SL}(2,3)$ have abelian
  centralizers, Claim \eqref{it:quotients_of_T} holds.

	To prove (2) we use Corollary \ref{cor:GX_classes}, as every conjugacy class
	of $\mathbf{SL}(2,3)$ has at most six elements. Then Lemma
	\ref{lem:dim_asmod_specbound} and (1) and $|X|=4$ imply (3).
\end{proof}

\subsection{The groups $\Gamma_n$}
\label{subsection:G_n}

Let $n\in\N_{\geq2}$. Recall from \cite{MR2732989} that 
\[
	\Gamma_{n}=\langle g,h,\epsilon\mid hg=\epsilon gh,\, g\epsilon=\epsilon^{-1}g,\, h\epsilon=\epsilon h,\,\epsilon^{n}=1\rangle.
\]
(These groups were denoted by $G_n$ in \cite{MR2732989}.)
Any element of $\Gamma_{n}$ can be written uniquely as $\epsilon^{i}h^{j}g^{k}$,
where $0\leq i\leq n-1$ and $j,k\in\mathbb{Z}$. 
By \cite[\S3]{MR2732989}, the conjugacy classes of $\Gamma_{n}$ are
\begin{gather*}
	{z}^{\Gamma_n}=\{z\},\quad
	{(gz)}^{\Gamma_n}=\{\epsilon^mgz\mid 0\leq m\leq n-1\},\\
	{(h^jz)}^{\Gamma_n}=\{h^jz,\epsilon^{-j}h^jz\},\quad
	{(hgz)}^{\Gamma_n}=\{\epsilon^mhgz\mid 0\leq m\leq n-1\},
\end{gather*}
where $z\in Z(\Gamma_n)=\langle\epsilon^{-1}h^2,h^n,g^2\rangle$ and $1\leq j\leq
n/2$. The centralizers 
	\begin{gather*}
    (\Gamma_n)^{gz}=\langle\epsilon ^{-1}h^2,g,h^n\rangle,\quad
    (\Gamma_n)^{hgz}=\langle\epsilon ^{-1}h^2,hg,h^n\rangle,
		\quad (\Gamma_n)^{h^jz}=\langle\epsilon,h,g^2\rangle
	\end{gather*}
are abelian. The commutator subgroup is $[\Gamma_n,\Gamma_n]=\langle\epsilon\rangle$.

\medskip
Now we show four examples of decomposable quandles.

\begin{exa}
	\label{exa:D4}
	Let $Z_2^{2,2}=h^{\Gamma_2}\cup g^{\Gamma_2}$. Then $Z_2^{2,2}\simeq\D_4$, the dihedral
	quandle of four elements. The enveloping group of $Z_2^{2,2}$ is 
	\[
		\langle x_1,x_2,x_3,x_4\mid x_ix_j=x_{2i-j\imod{4}}x_i,\,i,j\in
    \{1,2,3,4\}\}\simeq \Gamma_2.
	\]
	The isomorphism is given by $x_1\mapsto g$, $x_2\mapsto h$. 
\end{exa}

\begin{exa}
\label{exa:D3+1}
Let $Z_{3}^{3,1}=g^{\Gamma_3}\cup \{\epsilon h\}$. Note that $\epsilon h\in
Z(\Gamma_3)$.
	The enveloping group of $Z_3^{3,1}$ is isomorphic to
	\[
		\langle z\rangle\times\langle x_1,x_2,x_3\mid x_ix_j=x_{2i-j\imod{3}}x_i,
    \,i,j\in \{1,2,3\}\}\simeq \Gamma_3.
	\]
	The latter isomorphism is given by 
	$z\mapsto\epsilon h$, $x_1\mapsto g$, $x_2\mapsto\epsilon g$ and
	$x_3\mapsto\epsilon^2 g$.
\end{exa}

\begin{exa}
	\label{exa:X_3^32}
  Let $Z_{3}^{3,2}=g^{\Gamma_3}\cup h^{\Gamma_3}$. The enveloping group 
	of $Z_3^{3,2}$ is isomorphic to $\Gamma_3$.
\end{exa}

\begin{exa}
  \label{exa:X_4^42}
	Let $Z_4^{4,2}=g^{\Gamma_4}\cup h^{\Gamma_4}$.  The enveloping group of $Z_4^{4,2}$
	is isomorphic to $\Gamma_4$.
\end{exa}

\begin{lem}
	\label{lem:quotients_of_Gn}
	Let $G$ be an epimorphic image of $\Gamma_n$ for some $n\ge 2$. Then the following hold.
	\begin{enumerate}
		\item\label{it:quotients_of_Gn}
			$G$ has abelian centralizers.
		\item\label{it:classes_of_Gn}
			Every conjugacy class of $G$ has at most $n$ elements.
		\item\label{it:degrees_of_Gn}
			Every finite-dimensional absolutely simple $\K G$-module 
			has dimension at most two.
	\end{enumerate}
\end{lem}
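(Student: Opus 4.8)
The plan is to imitate the proof of Lemma~\ref{lem:quotients_of_T}, which is in fact easier here because the commutator subgroup $[\Gamma_n,\Gamma_n]=\langle\epsilon\rangle$ is cyclic. For \eqref{it:quotients_of_Gn}, write $G=\Gamma_n/K$. By Lemma~\ref{lem:abelian_centralizers} it suffices to produce a group isoclinic to $G$ with abelian centralizers, and by Lemma~\ref{lem:Hall}\eqref{it:Hall_1} we have $G\sim\Gamma_n/(K\cap[\Gamma_n,\Gamma_n])$. Now $K\cap[\Gamma_n,\Gamma_n]=K\cap\langle\epsilon\rangle=\langle\epsilon^d\rangle$ for some divisor $d$ of $n$, and adjoining the relation $\epsilon^d=1$ to the defining presentation identifies $\Gamma_n/\langle\epsilon^d\rangle$ with $\Gamma_d$ if $d\ge2$ and with an abelian group if $d=1$. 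Since every non-central element of $\Gamma_m$ ($m\ge2$) is conjugate to one of $gz$, $h^jz$, $hgz$, and the listed centralizers $(\Gamma_m)^{gz}$, $(\Gamma_m)^{hgz}$, $(\Gamma_m)^{h^jz}$ are abelian, each such $\Gamma_d$ has abelian centralizers; abelian groups trivially do. Thus Lemma~\ref{lem:abelian_centralizers} yields \eqref{it:quotients_of_Gn}.

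For \eqref{it:classes_of_Gn}, I would use that the canonical surjection $\pi\colon\Gamma_n\to G$ maps each conjugacy class of $\Gamma_n$ onto a conjugacy class of $G$, and that every conjugacy class of $G$ arises in this way; hence no conjugacy class of $G$ exceeds in size the largest conjugacy class of $\Gamma_n$. From the explicit list, $(gz)^{\Gamma_n}$ and $(hgz)^{\Gamma_n}$ have $n$ elements while $(h^jz)^{\Gamma_n}$ has $2\le n$, so this maximum equals $n$.

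For \eqref{it:degrees_of_Gn}, I would apply Lemma~\ref{lem:dim_asmod_specbound} to the image $\bar h$ of $h$ in $G$, exploiting that $h^{\Gamma_n}=\{h,\epsilon^{-1}h\}$ has only two elements. If $\bar h\notin Z(G)$, then $G^{\bar h}$ is abelian by \eqref{it:quotients_of_Gn} and $[G:G^{\bar h}]=|\bar h^G|\le|h^{\Gamma_n}|=2$, so Lemma~\ref{lem:dim_asmod_specbound} bounds the dimension of every absolutely simple $\K G$-module by $2$. The step I expect to need the most care is the degenerate case $\bar h\in Z(G)$, in which $G^{\bar h}=G$ gives no control: here I would use $[h,g]=\epsilon$, so that $\bar h\in Z(G)$ forces $\epsilon\in K$, whence $G$ is a quotient of the abelian group $\Gamma_n/\langle\epsilon\rangle$ and its absolutely simple modules are one-dimensional. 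Choosing $h$ rather than $g$ or $hg$, whose classes have the larger size $n$, is precisely what produces the sharp bound $2$.
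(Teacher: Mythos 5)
Your proof is correct and follows essentially the same route as the paper's: reduce via Hall's Lemma~\ref{lem:Hall}\eqref{it:Hall_1} to a quotient with trivial intersection with $\langle\epsilon\rangle$ (i.e.\ to some $\Gamma_d$), apply Lemma~\ref{lem:abelian_centralizers}, read off (2) from the listed conjugacy classes, and get (3) from Lemma~\ref{lem:dim_asmod_specbound} applied to the image of $h$. The only difference is that you explicitly treat the degenerate case where $\bar h$ becomes central (forcing $\epsilon\in K$ and $G$ abelian), a point the paper's one-line argument for (3) leaves implicit.
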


\begin{proof}
  Let $p:\Gamma_n\to G$ be the canonical map. If $\epsilon ^k\in \ker p$ for
  some $k>0$, then $G$ is also an epimorphic image of $\Gamma_k$. Since $[\Gamma_n,\Gamma_n]=\langle
  \epsilon \rangle $, we may asume that $\ker p\cap [\Gamma_n,\Gamma_n]=1$.
	Hence $\Gamma_n\sim G$ by Lemma \ref{lem:Hall}\eqref{it:Hall_1}.
  Therefore (1) follows
	from Lemma \ref{lem:abelian_centralizers},
  since $\Gamma_n$ has abelian centralizers. 

	Claim (2) follows from the description of conjugacy classes of $\Gamma_n$.
	Finally (3) follows from Lemma \ref{lem:dim_asmod_specbound} since $h^G$ has two elements. 
\end{proof}

\section{Weyl groupoids of rank two}
\label{section:Weylgroupoid}

Let us consider the map 
\[
\eta:\Z\to\mathbf{SL}(2,\Z),\quad\eta(c)=
\left(\begin{array}{cc}
	c & -1\\
	1 & 0\end{array}\right).
\]
A finite sequence $(c_1,c_2,\dots,c_n)$, $n\in\N$, of positive integers is a
\emph{characteristic sequence} if $\eta(c_1)\cdots\eta(c_n)=-\id$,  and the
entries of the first column of the matrix $\eta(c_1)\cdots\eta(c_i)$ are
non-negative integers for all $i<n$. We denote by $\mathcal{A}^+$ the set of
characteristic sequences. By \cite[Lemma 5.2]{MR2525553},
\begin{equation}
	\label{eq:reduction}
	(c_1,c_2,\dots,c_n)\in\mathcal{A}^+, c_2=1,n\ge 4 \Leftrightarrow
        (c_1-1,c_3-1,c_4,\dots,c_n)\in\mathcal{A}^+.
\end{equation}

\begin{lem}
	\label{lem:sequences}
	Let $(c_1,c_2,\dots,c_n)\in\mathcal{A}^+$. Then $n\ge 3$ and there exists
	$i\in\{1,\dots,n\}$ such that $c_i=1$ and $c_{i+1}\in\{1,2,3\}$, or $c_i=1$
	and $c_{i-1}\in\{1,2,3\}$, where $c_0=c_n$ and $c_{n+1}=c_1$.
\end{lem}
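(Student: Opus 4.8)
The plan is to convert the matrix identity into a scalar three-term recurrence and then read off a suitable $1$ together with a small neighbour directly, rather than iterating the reduction \eqref{eq:reduction}.

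First I would record the linear algebra. For $0\le i\le n$ let $v_i\in\Z^2$ be the first column of $\eta(c_1)\cdots\eta(c_i)$, so $v_0=\binom{1}{0}$; setting $v_{-1}=\binom{0}{-1}$ one checks that the second column of $\eta(c_1)\cdots\eta(c_i)$ equals $-v_{i-1}$, whence $v_i=c_iv_{i-1}-v_{i-2}$. The hypothesis $\eta(c_1)\cdots\eta(c_n)=-\id$ forces $v_{n-1}=\binom{0}{1}$ and $v_n=\binom{-1}{0}$, and the non-negativity of the partial products says $v_i$ has non-negative entries for $0\le i\le n-1$. Since $\eta(c_1)\cdots\eta(c_i)$ is invertible we have $v_i\ne0$, so the coordinate sum $s_i:=\langle(1,1),v_i\rangle$ satisfies $s_i\ge1$ for $0\le i\le n-1$; moreover $s_{-1}=-1$, $s_0=s_{n-1}=1$, $s_n=-1$, and $s_i=c_is_{i-1}-s_{i-2}$. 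The bound $n\ge3$ is now immediate, since the required value $v_{n-1}=\binom{0}{1}$ equals neither $v_0=\binom{1}{0}$ nor $v_1=\binom{c_1}{1}$, ruling out $n\le2$.

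Next I would locate a $1$ just after the highest peak of $(s_i)$. Put $M=\max_{0\le j\le n-1}s_j$. If $M=1$ then $s_1=c_1s_0-s_{-1}=c_1+1=1$ gives $c_1=0$, which is impossible; hence $M\ge2$, and since $s_0=s_{n-1}=1$ the maximum is attained at an interior index. Let $m$ be the \emph{largest} index with $s_m=M$; then $1\le m\le n-2$ and $s_{m+1}<M$. From $s_{m+1}=c_{m+1}s_m-s_{m-1}$ we obtain $c_{m+1}=(s_{m+1}+s_{m-1})/M<2$, because $s_{m+1}<M$ and $s_{m-1}\le M$; as $c_{m+1}$ is a positive integer, $c_{m+1}=1$. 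This also gives $a+b=M$ with $a:=s_{m-1}\ge1$ and $b:=s_{m+1}\ge1$.

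Finally I would bound a neighbour of this $1$. Assume for contradiction that $c_m\ge4$ and $c_{m+2}\ge4$. From $c_m=(M+s_{m-2})/a\ge4$ and $s_{m-2}\le M=a+b$ I get $3a-b\le s_{m-2}\le a+b$, so $a\le b$. From $c_{m+2}=(s_{m+2}+M)/b\ge4$ and the \emph{strict} inequality $s_{m+2}<M=a+b$ (valid because $m$ is the largest maximiser, with $s_n=-1$ in the case $m+2=n$) I get $3b-a\le s_{m+2}<a+b$, so $b<a$, a contradiction. Hence $c_m\le3$ or $c_{m+2}\le3$, and since $c_{m+1}=1$ the pair $(c_m,c_{m+1})$ or $(c_{m+1},c_{m+2})$ proves the claim with $i=m+1$. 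The crux — and the reason for choosing the largest maximiser — is precisely this asymmetry: one needs a strict bound on the right and only a weak bound on the left, so that $a\le b$ and $b<a$ collide; the column recurrence, the boundary values, and the degenerate indices $m\in\{1,n-2\}$ (where one of $s_{m-2},s_{m+2}$ equals $-1$) are routine to handle.
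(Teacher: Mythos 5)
Your argument is correct, and it is a genuinely different (and more self-contained) route than the paper's. The paper reduces everything to cited facts about characteristic sequences from \cite{MR2525553}: it invokes \cite[Prop.~5.3(4)]{MR2525553} for $n\le 3$, \cite[Cor.~4.2]{MR2525553} for the existence of an entry equal to $1$, cyclic-rotation invariance, and then iterates the reduction \eqref{eq:reduction} on a block decomposition of the sequence to force some $c_{ij}-1$ or $c_{ij}-2$ to equal $1$. You instead work directly with the columns $v_i$ of the partial products: the recurrence $v_i=c_iv_{i-1}-v_{i-2}$, the boundary values $v_0=\binom{1}{0}$, $v_{n-1}=\binom{0}{1}$, and positivity turn the problem into a statement about the coordinate-sum sequence $(s_i)$ (essentially the quiddity/frieze-pattern argument), and the choice of the \emph{last} maximiser $m$ gives $c_{m+1}=1$ together with the asymmetric bounds $a\le b$ and $b<a$ that rule out $c_m\ge 4$ and $c_{m+2}\ge 4$ simultaneously. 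I checked the details: the identities $s_{-1}=s_n=-1$, $s_0=s_{n-1}=1$, the deduction $M\ge 2$, the strictness of $s_{m+1}<M$ and $s_{m+2}<M$ (including the boundary cases $m+2=n$ and $m=1$, where $s_{m\pm2}=-1$), and the divisions by $a=s_{m-1}\ge1$ and $b=s_{m+1}\ge1$ all hold; note also that your index $i=m+1$ lies in $\{1,\dots,n-1\}$ and its relevant neighbour stays inside the sequence, so the cyclic convention $c_0=c_n$, $c_{n+1}=c_1$ is not even needed. The trade-off: the paper's proof is short given the external results and reuses machinery already present in the rank-two Weyl groupoid theory, while yours is elementary, avoids \cite{MR2525553} entirely, and in passing reproves the existence of an entry $1$ and the bound $n\ge3$ from scratch.
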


\begin{proof}
	Let $(c_{1},c_{2},\dots,c_{n})\in\mathcal{A}^+$. If $n\le 3$ then $(c_1,\dots
	,c_n)=(1,1,1)$ by \cite[Prop.~5.3(4)]{MR2525553}. Hence we may assume
	that $n>3$.  By \cite[Cor.~4.2]{MR2525553}, there exists $i\in \{1,\dots
	,n\}$ such that $c_i=1$.  Further, $(c_j,c_{j+1},\dots, c_n,c_1,\dots,
	c_{j-1})\in \mathcal{A}^+$ for all $j\in\{1,\dots,n\}$ by
	\cite[Prop.~5.3(2)]{MR2525553}. Also, $c_i=1$ implies that
	$c_{i-1},c_{i+1}>1$ by \eqref{eq:reduction}.  Therefore, without loss of
	generality we may assume that $c=(b_{1},b_{2},\dots,b_{r})$, where
	$b_{i}=(c_{i1},1,c_{i2},1,\dots,c_{i{m_{i}}},1,c_{i{m_i+1}})$ and $c_{ij}\ge
	2$ for all $1\le i\le r$, $1\le j\le m_i+1$, or $c=(d_1,1,\dots ,d_m,1)$ with
	$n=2m$, $d_1,\dots ,d_m\ge 2$.

	Assume first that $c=(b_{1},b_{2},\dots,b_{r})$. Then $m_i\ge 2$ for at
  least one $i$. By applying
	\eqref{eq:reduction} several times we obtain that
	$(b_{1}',b_{2}',\dots,b'_{r})\in \mathcal{A}^+$ , where 
	\[
		b_{i}'=(c_{i{1}}-1,c_{i{2}}-2,c_{i{3}}-2,\dots,c_{i{m_{i}}}-2,c_{i{m_i+1}}-1)
                \quad \text{for all $1\le i\le r$.}
	\]
	Since $(b'_{1},b'_{2},\dots,b'_{r})\in\mathcal{A}^+$,
	by \cite[Cor.~4.2]{MR2525553} there exists
	$i\in\{1,\dots,r\}$ such that $c_{i{1}}-1=1$ or $c_{i{m_i+1}}-1=1$ or
	$c_{ij}-2=1$ for some $j\in\{2,\dots,m_{i}\}$. Then the lemma holds.

	Now assume that $c=(d_{1},1,d_{2},1,\dots,d_{m},1)$, $n=2m$. If $d_i=2$ for
	some $1\le i\le m$ then we are done. Otherwise, after applying
	\eqref{eq:reduction} $m$ times we obtain that
	$(d_{1}-2,d_{2}-2,\dots,d_{m}-2)\in \mathcal{A}^+$. Hence there exists $i\in
	\{1,\dots,m\}$ such that $d_{i}-2=1$. This implies the lemma.
\end{proof}

Cartan schemes of rank two, their Weyl groupoids and their root systems
were studied in
\cite{MR2525553}. An indecomposable Cartan matrix
$C\in \Z ^{2\times 2}$ of finite type is a matrix of the
form \[
	\begin{pmatrix} 
		2 & -c_1 \\ 
		-c_2 & 2
	\end{pmatrix},
\]
where $c_1,c_2\in \N $, $1\le c_1c_2\le 3$.

\begin{cor}
	\label{cor:cartan_type}
	Let $\mathcal{C}=\mathcal{C}(\{1,2\},A,(\rho _i)_{i\in \{1,2\}},(C^a)_{a\in A})$
        be a connected Cartan scheme admitting a finite irreducible root
        system $(R^a)_{a\in A}$.
	Then there exists $a\in A$ such that $C^a\in\Z^{2\times2}$
	is an indecomposable Cartan matrix of finite type.
\end{cor}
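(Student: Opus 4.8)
The plan is to reduce the statement to the purely combinatorial Lemma~\ref{lem:sequences} by reading off a characteristic sequence from the Cartan matrices of $\mathcal{C}$. The bridge between $\mathcal{A}^+$ and the Cartan scheme is the following dictionary, which I would extract from \cite{MR2525553}: since $\mathcal{C}$ is connected and $(R^a)_{a\in A}$ is finite and irreducible, I fix an object $a^{(0)}\in A$ and form the alternating walk $a^{(k)}=\rho_{i_k}(a^{(k-1)})$ with $i_1=1$, $i_2=2$, $i_3=1,\dots$ Writing $c^a_{ij}$ for the entries of $C^a$ and setting $c_k=-c^{a^{(k-1)}}_{i_k i_{k+1}}$ (where $i_{k+1}$ is the index $\neq i_k$), finiteness and irreducibility of $(R^a)_{a\in A}$ guarantee that this walk closes into a cycle and that the resulting sequence $(c_1,\dots,c_n)$ satisfies $\eta(c_1)\cdots\eta(c_n)=-\id$ with non-negative partial first columns, so that $(c_1,\dots,c_n)\in\mathcal{A}^+$; here $n$ is the number of positive roots of $R^{a^{(0)}}$, and the indices are read cyclically, matching the conventions $c_0=c_n$, $c_{n+1}=c_1$ of Lemma~\ref{lem:sequences}.

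The key observation is that two cyclically consecutive entries of this sequence are precisely the two off-diagonal entries of the Cartan matrix at a single object. Indeed, the Cartan scheme axiom $c^{\rho_i(a)}_{ij}=c^a_{ij}$ applied at $a^{(k)}=\rho_{i_k}(a^{(k-1)})$ gives $-c^{a^{(k)}}_{i_k i_{k+1}}=-c^{a^{(k-1)}}_{i_k i_{k+1}}=c_k$, while by definition $-c^{a^{(k)}}_{i_{k+1} i_k}=c_{k+1}$ (using $i_{k+2}=i_k$ in rank two). Hence $C^{a^{(k)}}$ has off-diagonal entries $c_k$ and $c_{k+1}$, so its off-diagonal product equals $c_k c_{k+1}$. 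This identification is purely local and is insensitive to how the walk closes up globally, which is convenient.

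Now I would apply Lemma~\ref{lem:sequences} to $(c_1,\dots,c_n)$: it yields an index $i$ with $c_i=1$ and either $c_{i+1}\in\{1,2,3\}$ or $c_{i-1}\in\{1,2,3\}$. In the first case $C^{a^{(i)}}$ has off-diagonal product $c_i c_{i+1}=c_{i+1}\in\{1,2,3\}$; in the second case $C^{a^{(i-1)}}$ has off-diagonal product $c_{i-1}c_i=c_{i-1}\in\{1,2,3\}$. Since the $c_j$ are positive integers the product is at least $1$, so in either case the relevant Cartan matrix has the form $\begin{pmatrix}2&-c_1\\-c_2&2\end{pmatrix}$ with $1\le c_1 c_2\le 3$, i.e.\ it is an indecomposable Cartan matrix of finite type. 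This produces the desired object $a$.

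The main obstacle is Step one: correctly installing the dictionary from \cite{MR2525553}, in particular confirming that the off-diagonal Cartan entries along the alternating reflection walk do assemble, in the right order, into an element of $\mathcal{A}^+$, and that this is exactly the point where finiteness and irreducibility of $(R^a)_{a\in A}$ enter (ensuring the product of the $\eta(c_k)$ equals $-\id$). Once this correspondence and its conventions are pinned down, the axiom $c^{\rho_i(a)}_{ij}=c^a_{ij}$ and the combinatorial input of Lemma~\ref{lem:sequences} make the remainder immediate.
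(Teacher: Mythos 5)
Your proposal is correct and follows essentially the same route as the paper: build the alternating reflection walk, invoke \cite[Prop.~6.5]{MR2525553} to get a characteristic sequence in $\mathcal{A}^+$, and apply Lemma~\ref{lem:sequences}. Your explicit use of the axiom $c^{\rho_i(a)}_{ij}=c^a_{ij}$ to identify two cyclically consecutive entries with the off-diagonal entries of a single Cartan matrix is exactly the step the paper leaves implicit in ``This implies the corollary.''
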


\begin{proof}
        Let $a\in A$, $n=|R^a_+|$, $a_1,\dots ,a_{2n}\in A$, $c_1,\dots ,c_{2n}\in \N $
        such that
\begin{align*}
 a_{2r-1}=&(\rho _2\rho _1)^{r-1}(a),& a_{2r}=&\rho _1(\rho _2\rho _1)^{r-1}(a),\\
 c_{2r-1}=&-c_{12}^{a_{2r-1}},& c_{2r}=&-c_{21}^{a_{2r}}
\end{align*}
        for all $r\in \{1,2,\dots,n\}$. Then $(c_1,\dots,c_n)\in \mathcal{A}^+$
	by \cite[Prop.~6.5]{MR2525553}.
	By Lemma \ref{lem:sequences},
	there exists $i$ such that $c_i=1$ and $c_{i+1}\in\{1,2,3\}$, or $c_i=1$ and
	$c_{i-1}\in\{1,2,3\}$. This implies the corollary.
\end{proof}

\section{Nichols algebras over groups} \label{section:Nichols}

Recall that a Yetter-Drinfeld module over a group $G$ is a $\K G$-module
$V=\oplus _{g\in G}V_g$ such that $hV_g\subseteq V_{hgh^{-1}}$ for all $g,h\in
G$.

\begin{lem}
	\label{lem:TandGn}
    Let $G$ be an epimorphic image of one of the groups $\Gamma_2$, $\Gamma_3$,
    $\Gamma_4$ or $T$. Then every finite-dimensional absolutely simple
    Yetter-Drinfeld module over $G$ has dimension at most six.
\end{lem}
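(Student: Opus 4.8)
The plan is to reduce the bound on $\dim_\K V$ to the two numerical invariants already controlled by the preceding lemmas: the size of a conjugacy class of $G$ and the dimension of an absolutely simple module over a point stabilizer. Since $V$ is absolutely simple, I may extend scalars and assume that $\K$ is algebraically closed. First I would recall the standard description of simple Yetter--Drinfeld modules over a group: the relation $hV_g\subseteq V_{hgh^{-1}}$ means that the $G$-action permutes the homogeneous components along conjugation, so for a simple $V$ the support $\supp V$ is a single conjugacy class $g^G$, and $V$ is isomorphic to the induced module $\K G\otimes_{\K G^g}V_g$, where the isotypic component $V_g$ is an absolutely simple $\K G^g$-module. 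In particular
\[
  \dim_\K V=[G:G^g]\cdot\dim_\K V_g=|g^G|\cdot\dim_\K V_g .
\]

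Next I would split according to whether the representative $g$ is central. If $g\in Z(G)$, then $g^G=\{g\}$ and $V=V_g$ is itself an absolutely simple $\K G$-module; the bound is then immediate from Lemma~\ref{lem:quotients_of_Gn}\eqref{it:degrees_of_Gn} (dimension at most two) when $G$ is a quotient of $\Gamma_n$ with $n\in\{2,3,4\}$, and from Lemma~\ref{lem:quotients_of_T}\eqref{it:degrees_of_T} (dimension at most four) when $G$ is a quotient of $T$. If instead $g\notin Z(G)$, then $G^g$ is abelian by Lemma~\ref{lem:quotients_of_Gn}\eqref{it:quotients_of_Gn}, respectively Lemma~\ref{lem:quotients_of_T}\eqref{it:quotients_of_T}, so $V_g$, being absolutely simple over an abelian group, is one-dimensional; hence $\dim_\K V=|g^G|$ and it remains only to bound the size of the class.

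Finally I would insert the class-size bounds. For a quotient of $\Gamma_n$ with $n\le 4$ every conjugacy class has at most $n\le 4$ elements by Lemma~\ref{lem:quotients_of_Gn}\eqref{it:classes_of_Gn}, giving $\dim_\K V\le 4$; for a quotient of $T$ every class has at most six elements by Lemma~\ref{lem:quotients_of_T}\eqref{it:classes_of_T}, giving $\dim_\K V\le 6$. Together with the central case this yields $\dim_\K V\le 6$ in all cases.

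The only genuinely non-routine ingredient is the structural fact that an absolutely simple Yetter--Drinfeld module is induced from an absolutely simple representation of a point stabilizer, supported on a single conjugacy class; the rest is a bookkeeping assembly of the numerical bounds from Lemmas~\ref{lem:quotients_of_Gn} and~\ref{lem:quotients_of_T}, which were evidently tailored for exactly this conclusion. I expect the main point requiring care to be the verification that absolute simplicity of $V$ forces absolute simplicity---and hence, in the abelian case, one-dimensionality---of the stalk $V_g$, so that the reduction to $\dim_\K V=|g^G|$ is legitimate; treating the central support separately is what prevents the abelian-centralizer argument from being applied to a central element, where $G^g=G$ need not be abelian.
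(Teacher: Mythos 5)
Your proposal is correct and follows essentially the same route as the paper: both use the description of an absolutely simple Yetter--Drinfeld module as induced from an absolutely simple representation of the centralizer of an element of its (single conjugacy class) support, giving $\dim V=|g^G|\cdot\dim V_g$, and then combine the class-size, abelian-centralizer and module-dimension bounds of Lemmas~\ref{lem:quotients_of_Gn} and~\ref{lem:quotients_of_T}. Your explicit case split between central and non-central support is exactly the bookkeeping the paper leaves implicit.
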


\begin{proof}
	Any simple Yetter-Drinfeld module over $G$ is uniquely given by a conjugacy
	class $\mathcal{O}$ of $G$ and an irreducible representation $\rho$ of the
	centralizer of an element of $\mathcal{O}$.  In this case, $\dim
	V=|\mathcal{O}|\mathrm{deg}\,\rho $. Hence the claim follows from Lemmas
	\ref{lem:quotients_of_Gn} and \ref{lem:quotients_of_T}.
\end{proof}

For the study of Nichols algebras over groups the Weyl groupoid of a
tuple of simple Yetter-Drinfeld modules plays an important role.
For the definition we refer to \cite{MR2732989}.

 \begin{thm}
	\label{thm:HS}
	Let $\theta \in \N$, let $H$ be a Hopf algebra with bijective antipode and
	let $M=(M_1,\dots ,M_\theta )$, where each $M_i$ is a simple 
	Yetter-Drinfeld module over $H$.  Assume that $M$ admits all reflections and
	that the Weyl groupoid $\mathcal{W}(M)$ is finite. Then $\NA(M)$ is
	decomposable and admits a finite root system of type $\mathcal{C}(M)$. 
\end{thm}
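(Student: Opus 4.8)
The plan is to manufacture the Cartan scheme $\mathcal{C}(M)$ out of the reflection operators attached to $M$, transport the finiteness of $\mathcal{W}(M)$ into finiteness of the root system of $\NA(M)$, and extract a PBW decomposition on the way. First I would make the reflections explicit: since $M$ admits all reflections, for each $i$ and each $j\neq i$ the integer $a_{ij}=-\max\{m\ge 0:(\ad M_i)^m(M_j)\neq 0\}$ is finite, and, setting $a_{ii}=2$, the reflected tuple $R_i(M)$ whose $i$-th entry is the dual $M_i^*$ and whose $j$-th entry is $(\ad M_i)^{-a_{ij}}(M_j)$ again consists of simple Yetter-Drinfeld modules over $H$. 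Iterating the $R_i$ produces the objects of a Cartan scheme $\mathcal{C}(M)$, with Cartan matrices $(a_{ij})$ read off at each object, and $\mathcal{W}(M)$ is by definition its Weyl groupoid; the simple reflections act on $\Z^\theta$ by $s_i(\alpha_j)=\alpha_j-a_{ij}\alpha_i$.

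Next I would set up the dictionary between the algebraic degrees of $\NA(M)$ and the combinatorial roots of $\mathcal{C}(M)$. The Nichols algebra is $\N_0^\theta$-graded with $M_i$ in degree $\alpha_i$, and I call $\beta\in\N_0^\theta$ a root when $\NA(M)$ carries a nonzero homogeneous PBW generator of degree $\beta$. The key input is the reflection isomorphism relating $\NA(M)$ and $\NA(R_i(M))$: it intertwines the two gradings through $s_i$, so the root set is stable under every $s_i$ and hence under $\mathcal{W}(M)$. In particular each element of the $\mathcal{W}(M)$-orbit of $\alpha_1,\dots,\alpha_\theta$ --- each \emph{real root} --- is a root of $\NA(M)$, and the real roots form a root system of type $\mathcal{C}(M)$ in the sense of Cartan schemes.

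Finiteness is then immediate: since $\mathcal{W}(M)$ is finite, the orbit of the finitely many simple roots is finite, so at each object there are only finitely many real roots. The delicate point is the converse --- that \emph{every} root of $\NA(M)$ is real, so that the algebraic root system coincides with the real one. Here one exploits that each reflection permutes the positive roots other than the one being reflected; a non-real positive root would stay positive under reflection words of unbounded length, which the finiteness of $\mathcal{W}(M)$ forbids by a length argument parallel to the reduction of characteristic sequences in \eqref{eq:reduction}.

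It remains to assemble the decomposition. Using a reduced expression for the longest element of $\mathcal{W}(M)$ based at $M$, I would order the positive real roots $\beta_1,\dots,\beta_N$, define the associated root modules $M_{\beta_k}$ as images of simple modules under the corresponding partial reflection operators, and verify that the multiplication map $\bigotimes_k \NA(M_{\beta_k})\to\NA(M)$ is an isomorphism of $\N_0^\theta$-graded objects; this is precisely the statement that $\NA(M)$ is decomposable with root system of type $\mathcal{C}(M)$. I expect this compatibility to be the main obstacle: one must check that the root modules obtained along different reflection paths coincide and that their product genuinely exhausts $\NA(M)$ rather than only injecting into it.
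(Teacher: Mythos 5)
The paper does not prove this theorem internally: its entire proof consists of the citations ``$\NA(M)$ is decomposable by [MR2732989, Cor.~2.4]'' and ``then the theorem becomes precisely [MR2732989, Thm.~2.3]''. So your proposal has to be measured against the content of those cited results rather than against any in-paper argument. Your first two paragraphs (the definition of $a_{ij}$ via the vanishing of $(\ad M_i)^m(M_j)$, the reflected tuple $R_i(M)$ with $i$-th entry $M_i^*$ and $j$-th entry $(\ad M_i)^{-a_{ij}}(M_j)$, and the assembly of these data into a Cartan scheme) are correct and standard.

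As a standalone proof, however, your outline has a genuine circularity. You define a root of $\NA(M)$ as the degree of a ``nonzero homogeneous PBW generator'' and then use reflection-stability of this root set to deduce finiteness; but the existence of a PBW-type tensor decomposition indexed by such degrees is exactly the decomposability assertion of the theorem, which you only confront in your last paragraph and explicitly defer as ``the main obstacle.'' In the non-diagonal setting there is no Lyndon-word machinery available a priori, so one cannot speak of PBW generators before decomposability is established. The logic of the cited results runs the other way: one first splits off a single simple factor, $\NA(N)\simeq K_i^N\,\#\,\NA(N_i)$ with $K_i^N$ the algebra of right coinvariants, identifies $K_i^N$ with a Nichols algebra built from the reflected tuple, and then uses the finiteness of $\mathcal{W}(M)$ to iterate this splitting along reduced words in the groupoid until it terminates; decomposability and the identification of the resulting degrees with the real roots of $\mathcal{C}(M)$ emerge simultaneously from that induction. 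Your closing step (that $\bigotimes_k\NA(M_{\beta_k})\to\NA(M)$ is an isomorphism) and your ``length argument'' for the reality of every root are precisely where all the work lies, and neither is supplied.
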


\begin{proof}
	By \cite[Cor.~2.4]{MR2732989}, $\NA(M)$ is decomposable.  Then the theorem
	becomes precisely \cite[Thm.~2.3]{MR2732989}.
\end{proof}

For any Yetter-Drinfeld module $V$ over a Hopf algebra $H$ with bijective
antipode let $[V]$ denote the isomorphism class of $V$.  The first step in the
proof of Theorem~\ref{thm:main} will be the following claim.

\begin{pro}
	\label{pro:a12=-1}
	Let $H$ be a Hopf algebra with bijective antipode and let
  $M=(M_1,M_2)$ be a pair of simple Yetter-Drinfeld modules over $H$.
  Assume that $M$ admits all reflections and 
  $\mathcal{W}(M)$ is finite. If $(\id -c_{M_2,M_1}c_{M_1,M_2})\not=0$,
  then there exists a pair $N=(N_1,N_2)$
  of simple Yetter-Drinfeld modules over $H$, such that
  $[N]=([N_1],[N_2])\in\mathcal{W}(M)$ and
  $1\le a_{12}^{[N]}a_{21}^{[N]}\le 3$.
\end{pro}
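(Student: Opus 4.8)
The plan is to combine the structure theory of the Weyl groupoid with the combinatorial result on characteristic sequences that was just established in Corollary~\ref{cor:cartan_type}. The Weyl groupoid $\mathcal{W}(M)$ is, in the rank-two setting, governed by an underlying Cartan scheme $\mathcal{C}(M)=\mathcal{C}(\{1,2\},A,(\rho_i)_{i\in\{1,2\}},(C^a)_{a\in A})$, whose objects $a\in A$ index the pairs of simple Yetter-Drinfeld modules obtained from $M$ by iterated reflection, and whose Cartan entries $a_{12}^{[N]},a_{21}^{[N]}$ record the integers attached to each such pair $[N]\in\mathcal{W}(M)$. So the first step is purely a translation: I would invoke Theorem~\ref{thm:HS} to see that, since $M$ admits all reflections and $\mathcal{W}(M)$ is finite, $\NA(M)$ is decomposable and admits a finite root system of type $\mathcal{C}(M)$; in particular the Cartan scheme $\mathcal{C}(M)$ has a finite irreducible root system $(R^a)_{a\in A}$.

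Once $\mathcal{C}(M)$ is in hand, I would want to apply Corollary~\ref{cor:cartan_type} to produce an object $a\in A$ whose Cartan matrix $C^a$ is an indecomposable Cartan matrix of finite type, i.e.\ $1\le c_1 c_2\le 3$ in the notation there. Translating back, the pair $N=(N_1,N_2)$ corresponding to that object $a$ satisfies $1\le a_{12}^{[N]}a_{21}^{[N]}\le 3$, which is exactly the desired conclusion. The crucial hypothesis allowing Corollary~\ref{cor:cartan_type} to fire is that the root system is \emph{irreducible} and that the Cartan scheme is \emph{connected}; connectedness is automatic because all objects of $\mathcal{W}(M)$ arise from $M$ by reflections. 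The irreducibility of $(R^a)_{a\in A}$ is the point where the hypothesis $(\id-c_{M_2,M_1}c_{M_1,M_2})\neq 0$ must be used: if the root system were reducible, the root system of rank two would decompose into two rank-one pieces, forcing every Cartan matrix in the scheme to be diagonal and hence $(\ad M_1)(M_2)=0$, and this reducibility is precisely what is controlled by the square of the braiding $c_{M_2,M_1}c_{M_1,M_2}$ being trivial.

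Thus the main obstacle is establishing that the finite root system $(R^a)_{a\in A}$ of $\mathcal{C}(M)$ is irreducible, given only that $(\id-c_{M_2,M_1}c_{M_1,M_2})\neq 0$. I would argue by contrapositive: I would show that if the root system is reducible, then at every object the Cartan matrix is the $2\times 2$ identity-type diagonal matrix $\mathrm{diag}(2,2)$, so that $M_1$ and $M_2$ never interact under reflection. Concretely, reducibility means $R^a=R^a_1\sqcup R^a_2$ with $R^a_i\subseteq \Z\alpha_i$, which forces $a_{12}^a=a_{21}^a=0$ for all $a$, and in the Yetter-Drinfeld setting this is equivalent to $(\ad M_1)(M_2)=0$, i.e.\ the braided commutator vanishes, which in turn is governed by $c_{M_2,M_1}c_{M_1,M_2}=\id$. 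The careful part is matching the representation-theoretic condition $(\id-c_{M_2,M_1}c_{M_1,M_2})\neq 0$ with the groupoid-theoretic condition of irreducibility; I expect this to rest on the description of $(\ad V)^n(W)$ in terms of the braiding from \cite{MR2732989}, specifically the equivalence between $(\ad M_1)(M_2)=0$ and the triviality of $\id-c_{M_2,M_1}c_{M_1,M_2}$.

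Having secured irreducibility, the remaining steps are then formal: connectedness plus irreducibility feeds Corollary~\ref{cor:cartan_type}, and the object it yields gives the pair $N$ with $1\le a_{12}^{[N]}a_{21}^{[N]}\le 3$.
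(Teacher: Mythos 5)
Your proposal is correct and follows essentially the same route as the paper: the paper's proof likewise observes that the hypothesis $(\id-c_{M_2,M_1}c_{M_1,M_2})\neq 0$ (together with $M$ admitting all reflections) forces the real root system of $\mathcal{W}(M)$ to be irreducible, and then applies Corollary~\ref{cor:cartan_type} to produce the object with $1\le a_{12}^{[N]}a_{21}^{[N]}\le 3$. Your extra elaboration of the irreducibility step via $(\ad M_1)(M_2)\simeq(\id-c_{M_2,M_1}c_{M_1,M_2})(M_1\otimes M_2)$ is a correct filling-in of what the paper leaves implicit.
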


\begin{proof}
  Since $M$ admits all reflections and $(\id -c_{M_2,M_1}c_{M_1,M_2})\not=0$,
  the set of real roots of $\mathcal{W}(M)$ is irreducible. Therefore
  the proposition follows from Corollary~\ref{cor:cartan_type}.
\end{proof}

\begin{thm}
	\label{thm:quandles_and_groups}
	Let $\K$ be a field, $G$ be a non-abelian group, and $V$ and $W$ be two
	Yetter-Drinfeld modules over $G$. Assume that $G$ is generated as a group by
	$\supp(V\oplus W)$, $\supp V$ and $\supp W$ are conjugacy classes of $G$,
	$(\ad V)^2(W)=0$ and $(\ad W)^4(V)=0$.  If $(\id-c_{W,V}c_{V,W})(V\otimes 
	W)\ne0$ then $\supp(V\oplus W)$ is isomorphic to one of the quandles 
	\[
 	 Z_T^{4,1},Z_2^{2,2},Z_3^{3,1},Z_3^{3,2}\text{ and }Z_4^{4,2},
	\]
    and $G$ is isomorphic to an epimorphic image of the corresponding
    enveloping groups $T$, $\Gamma_2$, $\Gamma_3$, $\Gamma_3$ and $\Gamma_4$,
    respectively. 
\end{thm}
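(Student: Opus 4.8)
The plan is to convert the two vanishing hypotheses into purely combinatorial restrictions on the crossed set $X=\supp(V\oplus W)$, to classify the crossed sets that survive these restrictions, and finally to recover $G$ from $X$ through the universal property of the enveloping group. Write $A=\supp V$ and $B=\supp W$; these are conjugacy classes of $G$, hence sub-crossed-sets of $X$, and $X=A\cup B$ is a finite crossed set on which $\Inn(X)$ acts with $A$ and $B$ as unions of orbits. The hypothesis $(\id-c_{W,V}c_{V,W})(V\otimes W)\ne0$ says precisely that $(\ad V)(W)\ne0$, so conjugation by elements of $A$ acts non-trivially on $B$ and the two classes genuinely interact; in particular $A\neq B$.

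First I would use Proposition~\ref{pro:degrees} contrapositively. That proposition asserts the non-vanishing of $(\ad V)^{m+1}(W)$ as soon as $\supp V$ and $\supp W$ exhibit a sufficiently rich configuration under the conjugation action. Applied with $m=1$, the hypothesis $(\ad V)^2(W)=0$ forbids such a configuration and thereby bounds how each $\varphi_a$ ($a\in A$) may permute $B$, as well as the size of $B$. Applied with the roles of $V$ and $W$ interchanged and $m=3$, the hypothesis $(\ad W)^4(V)=0$ constrains the action of $B$ on $A$, but now allowing strictly more complexity. Together these two applications confine $A$, $B$ and their mutual permutation actions to a short finite list, and in particular force $|A|$, $|B|$ and hence $|X|$ to be small.

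The heart of the argument, and the step I expect to be the main obstacle, is the ensuing classification of $X=A\cup B$. Here I would run a case analysis on the individual structures of $A$ and $B$ — using Remark~\ref{rem:smallcrossedsets} for classes of size at most three and Remark~\ref{rem:uptosix} for the indecomposable pieces — together with the admissible mutual actions surviving the first step. Lemma~\ref{lem:invsubset} serves to show that once a subset of one class is centralized by part of the other it is preserved, pinning down which elements fix which; Lemma~\ref{lem:about_quandles} treats the case where $A$ and $B$ are isomorphic commutative orbits interchanged by the action, producing the paired-cycle pattern \eqref{eq:permutations}. Matching every surviving configuration against the explicit permutation descriptions in Section~\ref{section:groups}, one checks that $X$ must be isomorphic to one of $Z_T^{4,1}$, $Z_2^{2,2}$, $Z_3^{3,1}$, $Z_3^{3,2}$ or $Z_4^{4,2}$. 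The real difficulty is the bookkeeping: controlling how the constrained permutations $\varphi_a$ and $\varphi_b$ can simultaneously obey the quandle axioms, the crossed-set condition, and the smallness bounds without ever reintroducing a configuration that would make $(\ad V)^2(W)$ or $(\ad W)^4(V)$ nonzero.

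Finally I would identify $G$. Since $G$ is generated by $X$ and conjugation in $G$ realizes the quandle operation, the map $f:X\to G$ sending each point to the corresponding group element satisfies $f(i\trid j)=f(i)f(j)f(i)^{-1}$; by the universal property in Remark~\ref{rem:universal_property} it extends to a surjective homomorphism $G_X\to G$, so $G$ is a quotient of the enveloping group $G_X$. By Examples~\ref{exa:T+1}, \ref{exa:D4}, \ref{exa:D3+1}, \ref{exa:X_3^32} and \ref{exa:X_4^42}, the enveloping groups of $Z_T^{4,1}$, $Z_2^{2,2}$, $Z_3^{3,1}$, $Z_3^{3,2}$ and $Z_4^{4,2}$ are respectively $T$, $\Gamma_2$, $\Gamma_3$, $\Gamma_3$ and $\Gamma_4$, which gives the asserted description of $G$.
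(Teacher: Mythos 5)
Your overall strategy coincides with the paper's: use Proposition~\ref{pro:degrees} to convert the vanishing of $(\ad V)^2(W)$ and $(\ad W)^4(V)$ into combinatorial constraints on $X=\supp V\cup\supp W$, classify the surviving crossed sets with the help of Remarks~\ref{rem:smallcrossedsets} and \ref{rem:uptosix} and Lemmas~\ref{lem:invsubset} and \ref{lem:about_quandles}, and then recover $G$ from the universal property of the enveloping group together with Examples~\ref{exa:T+1}, \ref{exa:D4}, \ref{exa:D3+1}, \ref{exa:X_3^32} and \ref{exa:X_4^42}. The last step is correct and is exactly how the paper concludes. However, there is a genuine error at the outset: from $(\id-c_{W,V}c_{V,W})(V\otimes W)\ne0$, equivalently $(\ad V)(W)\ne0$, you infer that ``conjugation by elements of $A$ acts non-trivially on $B$.'' This is false. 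The braiding $c_{W,V}c_{V,W}$ acts on $V_r\otimes W_s$ both by conjugating degrees and by the module action, so it can differ from the identity even when $\supp V$ and $\supp W$ commute elementwise. Two of the five target quandles, $Z_3^{3,1}$ and $Z_T^{4,1}$, arise precisely in this commuting situation (in $Z_3^{3,1}$ the class of $h$ is the central singleton $\{\epsilon h\}\subseteq Z(\Gamma_3)$). The paper's proof is organized around exactly this dichotomy, treating the commuting and non-commuting cases separately in Propositions~\ref{pro:COMM} and \ref{pro:NC}; your opening assumption silently discards the commuting branch and with it two of the five answers.

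The second, larger gap is that the core of the theorem, the classification of $X=A\cup B$, is announced rather than carried out: ``one checks that $X$ must be isomorphic to one of\dots'' is the statement to be proved, not an argument. The paper needs an entire section for this step: Corollary~\ref{cor:size_suppV} to bound $|\supp V|$ and $|\supp W|$; Lemma~\ref{lem:NC:varphi_h} to extract from $(\ad V)^2(W)=0$ that $\Og$ is commutative, that each $\varphi_s|_{\Og}$ with $s\in\Oh$ is a transposition, and that $h^2\trid g=g$; Corollary~\ref{cor:classification} to reduce the indecomposable case to the list of Remark~\ref{rem:uptosix}; and then the targeted exclusion of $(123)^{\Alt_4}$, $\Aff(5,2)$, $\Aff(5,3)$, $\Aff(5,4)$, $(12)^{\Sym_4}$ and $(1234)^{\Sym_4}$ as candidates for $\Oh$ via Corollaries~\ref{cor:COMM:degrees} and \ref{cor:NC:degrees} (Lemmas~\ref{lem:123^A4}, \ref{lem:NC:affine5} and \ref{lem:12^S4}). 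None of this ``bookkeeping'' is routine --- it is where the theorem lives --- so a proposal that defers it has not yet proved the theorem.
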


Before proving Theorem \ref{thm:quandles_and_groups} we turn our attention to
some consequences.

\begin{thm}
	\label{thm:main}
    Let $\K$ be a field, $G$ be a non-abelian group, and $V$ and $W$ be
    finite-dimensional absolutely simple Yetter-Drinfeld modules over $G$.
    Assume that $G$ is generated by $\supp(V\oplus W)$, the pair $(V,W)$ admits
    all reflections, and the Weyl groupoid of $(V,W)$ is finite.  If
    $(\id-c_{W,V}c_{V,W})(V\otimes	W)\ne0$, then  $G$ is isomorphic to an
    epimorphic image of $\Gamma_n$ for some $n\in\{2,3,4\}$
    or $T$. Moreover,
    $\dim V\leq6$ and $\dim W\leq6$. 
\end{thm}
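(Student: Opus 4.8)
The plan is to deduce Theorem~\ref{thm:main} from Theorem~\ref{thm:quandles_and_groups} by transporting the problem to a suitable object of the Weyl groupoid, and then to read off the dimension bounds from Lemma~\ref{lem:TandGn}. The mismatch to overcome is that Theorem~\ref{thm:quandles_and_groups} is stated in terms of the explicit vanishing conditions $(\ad V)^2(W)=0$ and $(\ad W)^4(V)=0$, whereas here we only assume that $(V,W)$ admits all reflections and that $\mathcal{W}(V,W)$ is finite. Proposition~\ref{pro:a12=-1} is exactly the tool that bridges this gap.

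First I would apply Proposition~\ref{pro:a12=-1} to $M=(V,W)$; its hypotheses hold verbatim, so there is a pair $N=(N_1,N_2)$ of simple Yetter-Drinfeld modules over $G$ with $[N]\in\mathcal{W}(M)$ and $1\le a_{12}^{[N]}a_{21}^{[N]}\le3$. Hence the Cartan matrix at $[N]$ is indecomposable of finite type, and for rank two this forces one of $-a_{12}^{[N]},-a_{21}^{[N]}$ to equal $1$ and the other to lie in $\{1,2,3\}$. Interchanging $N_1$ and $N_2$ if necessary, assume $-a_{12}^{[N]}=1$. Since $-a_{ij}^{[N]}$ is the largest $m$ with $(\ad N_i)^m(N_j)\ne0$, this yields $(\ad N_1)^2(N_2)=0$, and from $-a_{21}^{[N]}\le3$ we get $(\ad N_2)^4(N_1)=0$.

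Next I would verify the remaining hypotheses of Theorem~\ref{thm:quandles_and_groups} for the pair $(N_1,N_2)$. As $N_1$ and $N_2$ are simple Yetter-Drinfeld modules over the same group $G$, their supports $\supp N_1$ and $\supp N_2$ are conjugacy classes. Each reflection replaces a module by its dual and the other module by an iterated braided adjoint image, so the supports of $N_1$ and $N_2$ lie in $\langle\supp(V\oplus W)\rangle=G$; since reflections are invertible, $\langle\supp(N_1\oplus N_2)\rangle=G$ as well. Moreover $a_{12}^{[N]}a_{21}^{[N]}\ge1$ gives $a_{12}^{[N]}\ne0$, i.e.\ $(\ad N_1)(N_2)\ne0$, which is equivalent to the double braiding $c_{N_2,N_1}c_{N_1,N_2}$ not being the identity on $N_1\otimes N_2$; thus $(\id-c_{N_2,N_1}c_{N_1,N_2})(N_1\otimes N_2)\ne0$. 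As $G$ is non-abelian, Theorem~\ref{thm:quandles_and_groups} applies to $(N_1,N_2)$ and shows that $G$ is isomorphic to a quotient of one of $T,\Gamma_2,\Gamma_3,\Gamma_4$; in other words $G$ is a quotient of $T$ or of $\Gamma_n$ for some $n\in\{2,3,4\}$.

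It remains to bound $\dim V$ and $\dim W$, which are unaffected by the reduction. Since $G$ is a quotient of one of $\Gamma_2,\Gamma_3,\Gamma_4,T$, Lemma~\ref{lem:TandGn} bounds the dimension of every finite-dimensional absolutely simple Yetter-Drinfeld module over $G$ by six; applying this to $V$ and $W$ gives $\dim V\le6$ and $\dim W\le6$. The hard part, I expect, is the careful transfer of all hypotheses of Theorem~\ref{thm:quandles_and_groups} from $(V,W)$ to the reflected pair $(N_1,N_2)$: namely, that the subgroup generated by the support is invariant under reflection, and that $a_{12}^{[N]}\ne0$ is equivalent to the non-triviality of the double braiding. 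Everything else is then a direct application of the quoted results.
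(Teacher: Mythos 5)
Your proposal is correct and follows essentially the same route as the paper: apply Proposition~\ref{pro:a12=-1} to move to an object of the Weyl groupoid where the Cartan matrix is of finite type (giving $(\ad N_1)^2(N_2)=0$ and $(\ad N_2)^4(N_1)=0$ after a possible interchange), invoke Theorem~\ref{thm:quandles_and_groups} to identify $G$ as a quotient of $T$ or $\Gamma_n$, and conclude the dimension bounds from Lemma~\ref{lem:TandGn}. The only difference is that you spell out the transfer of hypotheses to the reflected pair (supports being conjugacy classes, invariance of the generated subgroup under reflections, and the equivalence of $a_{12}^{[N]}\ne0$ with the non-triviality of the double braiding), which the paper leaves implicit.
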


\begin{proof}
    Proposition \ref{pro:a12=-1} implies that after changing the object of
    $\mathcal{W}(V,W)$, and possibly interchanging $V$ and $W$, we may assume
    that $(\ad V)(W)\not=0$, $(\ad V)^2(W)=0$ and $(\ad W)^4(V)=0$.  Theorem
    \ref{thm:quandles_and_groups} implies that the group $G$ is 
    an epimorphic image of $\Gamma_n$ for $n\in\{2,3,4\}$ or an epimorphic
    image of $T$.  After applying reflections to the pair $(V,W)$ we obtain new
    pairs $(V',W')$ of absolutely simple Yetter-Drinfeld modules over $G$. Thus
    the claim follows from Lemma~\ref{lem:TandGn}.
\end{proof}

\begin{cor} 
	\label{cor:main}
	Let $\K$ be a field, $G$ be a non-abelian group, and $V$ and $W$ be
	finite-dimensional absolutely simple Yetter-Drinfeld modules over $G$.
	Assume that $G$ is generated by $\supp(V\oplus W)$ and $\NA(V\oplus W)$ is
	finite-dimensional.  If $(\id-c_{W,V}c_{V,W})(V\otimes	W)\ne0$, then $\dim
	V\leq6$ and $\dim W\leq6$. 
\end{cor}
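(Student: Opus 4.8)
The plan is to reduce the statement to Theorem~\ref{thm:main}. The hypotheses of the corollary and those of Theorem~\ref{thm:main} coincide in every respect except one: the corollary assumes that $\NA(V\oplus W)$ is finite-dimensional, whereas Theorem~\ref{thm:main} assumes instead that the pair $(V,W)$ admits all reflections and that its Weyl groupoid $\mathcal{W}(V,W)$ is finite. Hence the only thing left to establish is that finite-dimensionality of the Nichols algebra implies these two conditions.

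For this I would appeal to the general theory of Weyl groupoids of Nichols algebras: by \cite[Thm.~3.12, Prop.~3.23]{MR2766176}, if $\NA(V\oplus W)$ is finite-dimensional, then the pair $(V,W)$ admits all reflections and $\mathcal{W}(V,W)$ is finite. This is precisely the implication already recorded in the introduction. Granting it, all remaining hypotheses of Theorem~\ref{thm:main} hold unchanged: $G$ is non-abelian and generated by $\supp(V\oplus W)$, the modules $V$ and $W$ are finite-dimensional and absolutely simple, and $(\id-c_{W,V}c_{V,W})(V\otimes W)\ne 0$.

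With these in place, Theorem~\ref{thm:main} applies verbatim and yields the desired bounds $\dim V\le 6$ and $\dim W\le 6$, which is exactly the assertion. Since all the substantial work is carried by Theorem~\ref{thm:main} together with the finiteness criterion from \cite{MR2766176}, there is no genuine obstacle in the deduction itself; the only point requiring care is checking that the reflection and finiteness conditions are indeed forced by the finite-dimensionality assumption, and this is precisely what the cited results guarantee.
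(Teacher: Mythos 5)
Your proposal is correct and follows exactly the same route as the paper: deduce from \cite{MR2766176} that finite-dimensionality of $\NA(V\oplus W)$ forces $(V,W)$ to admit all reflections with finite Weyl groupoid, and then invoke Theorem~\ref{thm:main}. The only cosmetic difference is the precise reference cited for the reflection property (the paper's proof uses Cor.~3.18 of \cite{MR2766176} rather than Thm.~3.12), which does not affect the argument.
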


\begin{proof}
	Assume that $\NA(V\oplus W)$ is finite-dimensional. Then $(V,W)$ admits all
	reflections by \cite[Cor.~3.18]{MR2766176} and the Weyl groupoid is
	finite by \cite[Prop.~3.23]{MR2766176}. So Theorem \ref{thm:main}
	applies.
\end{proof}

\section{Proof of Theorem \ref{thm:quandles_and_groups}}
\label{section:proof}

The key of our proof is Proposition~\ref{pro:degrees} which
allows us to construct
non-zero elements of $(\ad V)^m(W)$ for any two
Yetter-Drinfeld modules $V,W$ over a
group $G$ and for any $m\in\N$ under some assumption on $G$.  Then we split our
analysis into two parts depending on the question whether $\supp V$ and $\supp
W$ commute.  Finally, we prove Theorem~\ref{thm:quandles_and_groups} in
\S\ref{subsection:proof}. 

In the whole section, let $G$ be a non-abelian group and let $V=\oplus_{s\in
G}V_s$ and $W=\oplus_{t\in G}W_t$ be Yetter-Drinfeld modules over $G$.

\subsection{General considerations}

\begin{lem}
	\label{lem:decomposingG}
	Let $G$ be a group, and $g,h\in G$. Assume that $G$ is
	generated by $g^G$ and $h^G$. Then
	$G=AB$, where $A=\langle \Og\rangle$, $B=\langle\Oh\rangle$, and 
	\[
		AB=\{ab\mid a\in A,\;b\in B\}.
	\]
\end{lem}

\begin{proof}
	Let $r\in\Og$ and $s\in\Oh$. Writing $sr=r(r^{-1}sr)$ we conclude that
	$\Og\Oh=\Oh\Og$. From this the claim follows.
\end{proof}

Recall that $S_n\in \End (V^{\otimes n})$, where $n\in \N $, denotes the
quantum symmetrizer.

\begin{lem}{\cite[Prop.~6.5]{MR2734956}}
	\label{lem:T_n}
	Let $n\in\N$. Then 
	\[
	(\ad V)^n(W)\simeq (S_n\otimes\id)T_n(V^{\otimes n}\otimes W),
	\]
	where $T_n\in\End(V^{\otimes
	n}\otimes W)$ is defined by
	\[
	T_n = (\id-c^2_{n,n+1}c_{n-1,n}\cdots
  c_{1,2})\cdots(\id-c_{n,n+1}^2c_{n-1,n})(\id-c_{n,n+1}^2).
	\]
\end{lem}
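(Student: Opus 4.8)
The plan is to work inside the Nichols algebra $\NA(V\oplus W)$ and to realize $(\ad V)^n(W)$ as the image of the iterated adjoint map $\mathrm{ad}_n\colon V^{\otimes n}\otimes W\to\NA(V\oplus W)$, $v_1\otimes\cdots\otimes v_n\otimes w\mapsto(\ad v_1)\cdots(\ad v_n)(w)$. Since $\mathrm{ad}_n$ and the operator $(S_n\otimes\id)T_n$ share the source $V^{\otimes n}\otimes W$, it suffices to prove the equality of kernels $\ker(\mathrm{ad}_n)=\ker\bigl((S_n\otimes\id)T_n\bigr)$: both maps are morphisms of Yetter-Drinfeld modules, so this equality forces a canonical isomorphism $\mathrm{im}(\mathrm{ad}_n)\simeq\mathrm{im}\bigl((S_n\otimes\id)T_n\bigr)$ of Yetter-Drinfeld modules, which is exactly the assertion.

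First I would settle the case $n=1$ by hand. Here $S_1=\id$ and $T_1=\id-c^2_{1,2}$. Using the braided commutator formula $(\ad v)(w)=vw-(v_{(-1)}\cdot w)v_{(0)}$ together with the description of $\NA(V\oplus W)_{(1,1)}$ as the image of the degree-two quantum symmetrizer of $V\oplus W$, one checks directly that $\mathrm{ad}_1(v\otimes w)=0$ holds in the Nichols algebra if and only if $(\id-c^2_{1,2})(v\otimes w)=0$: of the two defining conditions for membership in the kernel of that symmetrizer, one is satisfied automatically and the other reduces precisely to $(\id-c^2_{1,2})(v\otimes w)=0$. Thus $\ker(\mathrm{ad}_1)=\ker T_1$, and the appearance of the squared braiding $c^2_{1,2}$ is already visible here.

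For the inductive step I would exploit three compatible recursions. On the adjoint side, $\mathrm{ad}_n=(\ad v_1)\circ(\id_V\otimes\mathrm{ad}_{n-1})$, the inner factor acting on the tensor positions $2,\dots,n+1$. On the operator side, a direct inspection of the defining products gives $T_n=\beta_n\,(\id_V\otimes T_{n-1})$ with new leftmost factor $\beta_n=\id-c^2_{n,n+1}c_{n-1,n}\cdots c_{1,2}$, while the quantum symmetrizer obeys the standard recursion $S_n=\sigma_n\,(\id_V\otimes S_{n-1})$ with $\sigma_n=\id+c_{1,2}+c_{1,2}c_{2,3}+\cdots+c_{1,2}\cdots c_{n-1,n}$. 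Combining these yields $(S_n\otimes\id)T_n=(\sigma_n\otimes\id)\,(\id_V\otimes S_{n-1}\otimes\id)\,\beta_n\,(\id_V\otimes T_{n-1})$, so that, granting the inductive hypothesis for the innermost operator $\id_V\otimes(S_{n-1}\otimes\id)T_{n-1}$, the problem is reduced to a single extra application of $(\ad v_1)$ against the head $(\sigma_n\otimes\id)(\id_V\otimes S_{n-1}\otimes\id)\beta_n$.

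The hard part will be matching these two one-step passages. Unwinding $(\ad v_1)(y)=v_1y-(v_1)_{(-1)}\cdot y\,(v_1)_{(0)}$ on a normal-ordered element $y$ braids $v_1$ all the way to the right past $y$; bringing the result back to normal order modulo the relations of $\NA(V\oplus W)$ is what converts the single braidings of the adjoint action into the squared braiding $c^2_{n,n+1}$ recorded by $\beta_n$, and simultaneously constrains the $V$-factors to lie in $\NA(V)_n=\mathrm{im}(S_n)$, producing $\sigma_n$. The technical core is therefore a braid-relation computation combined with the defining property of the Nichols algebra, namely that the canonical projection onto $\NA(V\oplus W)$ has in each degree the kernel of the corresponding quantum symmetrizer, used to identify $\ker(\mathrm{ad}_n)$ with $\ker\bigl((S_n\otimes\id)T_n\bigr)$. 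I expect the most delicate point to be commuting $\beta_n$ past $\id_V\otimes S_{n-1}\otimes\id$ so as to expose the inductive operator, which is governed by repeated use of the braid equation $c_{i,i+1}c_{i+1,i+2}c_{i,i+1}=c_{i+1,i+2}c_{i,i+1}c_{i+1,i+2}$; once the case $n=1$ and the three recursions are in place, everything else is formal bookkeeping.
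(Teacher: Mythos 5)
First, a point of comparison: the paper offers no proof of Lemma~\ref{lem:T_n} at all --- it is imported verbatim from \cite[Prop.~6.5]{MR2734956} --- so the only in-paper material your attempt can be measured against is the companion recursion in Lemma~\ref{lem:X_n}. Your overall architecture (realize $(\ad V)^n(W)$ as the image of the iterated adjoint map out of $V^{\otimes n}\otimes W$ and prove equality of kernels with $(S_n\otimes \id)T_n$ by induction) is indeed the strategy of the cited source. Your base case is correct and complete: $S_2(\id-c)(v\otimes w)$ has vanishing $W\otimes V$ component and $V\otimes W$ component $(\id-c^2_{1,2})(v\otimes w)$, so $\ker(\mathrm{ad}_1)=\ker T_1$. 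The three recursions you record for $\mathrm{ad}_n$, $T_n$ and $S_n$ are also all correct.

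The gap sits exactly at the point you flag as ``delicate'' and then dismiss as formal bookkeeping. The factor $\beta_n=\id-c^2_{n,n+1}c_{n-1,n}\cdots c_{1,2}$ does \emph{not} commute past $\id_V\otimes S_{n-1}\otimes\id_W$, and when it is pushed to the left it does not survive in the shape ``identity minus a single braiding word'': the correct head operator is the map $\varphi_n$ of Lemma~\ref{lem:X_n}, whose recursive definition carries the additional summand $(\id\otimes\varphi_{n-1})c_{1,2}$. The resulting identity $(S_n\otimes\id_W)T_n=\varphi_n(\id_V\otimes S_{n-1}\otimes\id_W)(\id_V\otimes T_{n-1})$ is the main theorem of \cite{MR2732989} and does not follow from repeated applications of the braid relation alone. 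Moreover, even granting that identity, your induction does not yet close: the hypothesis $\ker(\mathrm{ad}_{n-1})=\ker((S_{n-1}\otimes\id)T_{n-1})$ only reduces the problem to computing the kernel of the single adjoint action $V\otimes X\to\NA(V\oplus W)$ on the Yetter-Drinfeld submodule $X=(\ad V)^{n-1}(W)\subseteq\NA(V\oplus W)_n$, and identifying that kernel with the kernel of $\varphi_n$ on $V\otimes (S_{n-1}\otimes\id)T_{n-1}(V^{\otimes(n-1)}\otimes W)$ requires a separate argument with the degree-$(n+1)$ quantum symmetrizer (the defining property of the Nichols algebra in that degree), not just the degree-$n$ induction hypothesis. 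So the skeleton is the right one, but the load-bearing operator identity is misstated and its proof --- which is the actual content of the lemma --- is missing.
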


\begin{lem}{\cite[Thm.~1.1]{MR2732989}}
	\label{lem:X_n}
	Let $\varphi _0=0$, $X_0^{V,W}=W$, and 
	\begin{align*}
		\varphi_m &= \id-c_{V^{\otimes(m-1)}\otimes W,V}\,c_{V,V^{\otimes(m-1)}
    \otimes W}+(\id\otimes\varphi_{m-1})c_{1,2},\\
		X_m^{V,W} &= \varphi_m(V\otimes X_{m-1})
	\end{align*}
	for all $m\geq1$. Then
	\[
		(S_{n+1}\otimes\id_W)T_{n+1}=\varphi_{n+1}(\id_V\otimes
    S_n\otimes\id_W)(\id_V\otimes T_n).
	\]
	and $(\ad V)^n(W)\simeq X_n^{V,W}$ 
  for all $n\in \N _0$.
\end{lem}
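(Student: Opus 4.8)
The plan is to deduce the isomorphism $(\ad V)^n(W)\simeq X_n^{V,W}$ from the operator factorization, and to regard that factorization as the real content. By Lemma~\ref{lem:T_n} we have $(\ad V)^n(W)\simeq (S_n\otimes\id)T_n(V^{\otimes n}\otimes W)$, so it is enough to identify the subspace $(S_n\otimes\id)T_n(V^{\otimes n}\otimes W)$ with $X_n^{V,W}$. I would do this by induction on $n$. The case $n=0$ is immediate, since $S_0=T_0=\id$ and $X_0^{V,W}=W$. For the inductive step one applies the factorization together with the elementary identity $(\id_V\otimes A)(V\otimes U)=V\otimes A(U)$ for an operator $A$ on the trailing factors and a subspace $U$; this yields
\[
  (S_{n+1}\otimes\id)T_{n+1}(V^{\otimes(n+1)}\otimes W)
   =\varphi_{n+1}\bigl(V\otimes X_n^{V,W}\bigr)=X_{n+1}^{V,W},
\]
the last equality being the defining recursion of $X^{V,W}$. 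So this half is routine once the factorization is available.

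For the factorization I would isolate two structural facts. First, splitting off the leftmost factor of $T_{n+1}$ gives the clean relation $T_{n+1}=\bigl(\id-c^2_{n+1,n+2}c_{n,n+1}\cdots c_{1,2}\bigr)(\id_V\otimes T_n)$, because the remaining $n$ factors of $T_{n+1}$ are exactly the factors of $\id_V\otimes T_n$ with all indices shifted by one. Second, I would use the braided analogue of the coset decomposition of the symmetric group, namely $S_{n+1}=(\id_V\otimes S_n)\,\Sigma_{n+1}$ with $\Sigma_{n+1}=\sum_{k=0}^{n}c_{1,2}c_{2,3}\cdots c_{k,k+1}$ (the empty product for $k=0$ being $\id$). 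Substituting both into $(S_{n+1}\otimes\id_W)T_{n+1}$, the factorization reduces to the single operator identity
\[
  (\id_V\otimes S_n\otimes\id_W)(\Sigma_{n+1}\otimes\id_W)
    \bigl(\id-c^2_{n+1,n+2}c_{n,n+1}\cdots c_{1,2}\bigr)
   =\varphi_{n+1}(\id_V\otimes S_n\otimes\id_W)
\]
on $V^{\otimes(n+1)}\otimes W$.

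I would prove this identity by induction on $n$, matching the two recursions term by term. On the left, $\Sigma_{n+1}=\id+c_{1,2}(\id_V\otimes\Sigma_n)$ splits the shuffle factor into its $k=0$ part and a part beginning with $c_{1,2}$; on the right, $\varphi_{n+1}=\id-c_{V^{\otimes n}\otimes W,V}\,c_{V,V^{\otimes n}\otimes W}+(\id_V\otimes\varphi_n)c_{1,2}$ has the analogous shape. The terms beginning with $c_{1,2}$ are handled by the inductive hypothesis applied on tensor positions $2,\dots,n+2$, after invoking the recursion for $S_n$ once more. The remaining terms must reproduce the full double braiding $c_{V^{\otimes n}\otimes W,V}c_{V,V^{\otimes n}\otimes W}$, which braids the first $V$ past all the other $n$ copies of $V$ and past $W$ and back. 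The braid (Yang--Baxter) relations are used repeatedly to bring the double braiding $c^2_{n+1,n+2}c_{n,n+1}\cdots c_{1,2}$ and the partial cycles of $\Sigma_{n+1}$ into the required positions.

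The main obstacle is exactly this last reconciliation: the simple expression $c^2_{n+1,n+2}c_{n,n+1}\cdots c_{1,2}$ coming from $T_{n+1}$ carries only a single squared crossing with the $W$-strand, whereas the double braiding in $\varphi_{n+1}$ crosses the entire $V^{\otimes n}$ block twice. These two operators are not equal in general; they agree only after composition with the symmetrizer $\id_V\otimes S_n$, which symmetrizes the $V$-strands in positions $2,\dots,n+1$ and thereby lets one straighten the crossings of the first strand through that block. Making this ``straightening'' precise---tracking how $S_n$ absorbs the discrepancy at each stage of the induction---is where the bulk of the work lies, and it is the step for which the recursive definition of $\varphi_m$ was engineered.
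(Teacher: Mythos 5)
The paper does not actually prove this lemma: it is quoted verbatim as \cite[Thm.~1.1]{MR2732989}, so there is no in-paper argument to compare yours against, and your proposal has to stand on its own. The parts you do carry out are correct. The peeling-off identity $T_{n+1}=\bigl(\id-c^2_{n+1,n+2}c_{n,n+1}\cdots c_{1,2}\bigr)(\id_V\otimes T_n)$ is an immediate index shift; the coset factorization $S_{n+1}=(\id_V\otimes S_n)\Sigma_{n+1}$ is the standard Matsumoto/length-additive decomposition; and deducing $(\ad V)^n(W)\simeq X_n^{V,W}$ from the operator factorization by induction is routine, as you say.

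The genuine gap is that the identity you reduce everything to,
\[
  (\id_V\otimes S_n\otimes\id_W)(\Sigma_{n+1}\otimes\id_W)
    \bigl(\id-c^2_{n+1,n+2}c_{n,n+1}\cdots c_{1,2}\bigr)
   =\varphi_{n+1}(\id_V\otimes S_n\otimes\id_W),
\]
is never proved; the last two paragraphs of your proposal describe a strategy and then explicitly defer ``the bulk of the work,'' which is precisely the entire content of the cited theorem. Two points need attention there. First, this identity is \emph{strictly stronger} than the lemma: you have cancelled the right factor $(\id_V\otimes T_n)$, so you must show the bracketed difference vanishes on all of $V^{\otimes(n+1)}\otimes W$, not merely on the image of $\id_V\otimes T_n$; this strengthening happens to be true, but it is not automatic and deserves a remark. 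Second, the reconciliation is more concrete than your description suggests: writing $P=c^2_{n+1,n+2}c_{n,n+1}\cdots c_{1,2}$, the identity is equivalent to $S_{n+1}\otimes\id_W-(S_{n+1}\otimes\id_W)P=\varphi_{n+1}(\id_V\otimes S_n\otimes\id_W)$, and after unfolding the recursion for $\varphi_{n+1}$ both sides become signed sums of braid-group lifts which match term by term using only the braid relation and commutativity of distant crossings (I checked $n=1,2$; e.g.\ for $n=2$ one needs $c_1c_3^2c_2c_1=c_3^2c_2c_1c_2$ and its two translates). So the approach is viable, but until that term-by-term matching is organized into an actual induction, the proposal is a reduction of the theorem to an unproved operator identity rather than a proof.
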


Let $m\in \N _0$.
Recall that an element of $V^{\otimes m}\otimes W$ has \emph{degree}
$(r_1,\dots,r_m,s)$, where
$r_1,\dots,r_m,s\in G$, if it is contained in
$V_{r_1}\otimes\cdots V_{r_m}\otimes W_s$.

Let
$r_1,r_2,\dots,r_m\in\supp V$ and $s\in\supp W$, and write 
\[
	Q_m(r_1,\dots,r_m,s)=(S_m\otimes\id)T_m(V_{r_1}\otimes\cdots\otimes
  V_{r_m}\otimes W_s)\subseteq V^{\otimes m}\otimes W.
\]
Although the vector space $V^{\otimes m}\otimes W$ is graded by $(\supp
V)^m\times \supp W$, the subspace $Q_m(r_1,\dots,r_m,s)$ is usually not graded.
For $t\in V^{\otimes m}\otimes W$ we write $\supp t$ for the set of $d\in
(\supp V)^m\times \supp W$, such that the homogeneous component of $t$ of
degree $d$ is non-zero. We let $$\supp Q=\{\supp t\,|\,t\in Q\}$$ for all
subspaces $Q\subseteq V^{\otimes m}\otimes W$.

\begin{rem} 
	\label{rem:adVmW=0}
  Let $m\in \N $. By Lemma~\ref{lem:T_n},
  $(\ad V)^m(W)=0$ if and only if $\supp Q_m(r_1,\dots ,r_m,s)=0$
  for all $r_1,\dots ,r_m\in \supp V$, $s\in \supp W$.
\end{rem}

\begin{pro}
	\label{pro:degrees}
  Let $m\in\N_0$, $p_1,\dots ,p_m,r_1,\dots,r_m\in\supp V$ and
  $p_{m+1}$, $s\in\supp W$ such that
  \[
		(p_1,\dots,p_m,p_{m+1})\in\supp Q_m(r_1,\dots,r_m,s).
  \]
  Let $p\in \supp V$ and $i\in \{1,\dots,m+1\}$ and assume that
  \begin{gather}
		\label{eq:degreeprop+}
    p_i\trid p\not=p,\quad p_j\triangleright p=p \text{ for all $j$ with
    $i<j\le m+1$,}\\
		\label{eq:degreeprop-}
    p\not\in\{p_j\,|\,1\le j\le m\} \cup
    \{(p_{j+1}p_{j+2}\cdots p_{m+1})^{-1}\triangleright p_j\,|\,
    1\le j<i\}.
	\end{gather}
	Then
	$(p\triangleright p_1,\dots,p\triangleright
  p_{i-1},p,p_i,\dots,p_m,p_{m+1})\in \supp Q_{m+1}(p,r_1,\dots,r_m,s)$.
\end{pro}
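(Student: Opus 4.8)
The plan is to isolate a single homogeneous component and track it through the recursion of Lemma~\ref{lem:X_n}. Taking $n=m$ in that lemma gives $(S_{m+1}\otimes\id_W)T_{m+1}=\varphi_{m+1}(\id_V\otimes(S_m\otimes\id_W)T_m)$, whence
\[
Q_{m+1}(p,r_1,\dots,r_m,s)=\varphi_{m+1}\bigl(V_p\otimes Q_m(r_1,\dots,r_m,s)\bigr).
\]
Fix $x\in Q_m(r_1,\dots,r_m,s)$ whose homogeneous component $x_d$ of degree $d=(p_1,\dots,p_{m+1})$ is nonzero, and $0\ne v\in V_p$. Writing $T=(p\trid p_1,\dots,p\trid p_{i-1},p,p_i,\dots,p_{m+1})$ for the target degree, it suffices to show that the degree-$T$ component of $\varphi_{m+1}(v\otimes x)$ is nonzero.

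Next I would expand $\varphi_{m+1}$ through its recursive definition. Each summand $(\id\otimes\varphi_k)c_{1,2}$ moves the inserted factor (degree $p$) one slot to the right, replacing each letter $p_j$ it passes by $p\trid p_j$, while a branch terminates either with the identity term or with the double braiding. Thus every leaf is indexed by a travel length $\ell\in\{0,\dots,m\}$, i.e.\ by the braidings $c_{1,2},c_{2,3},\dots,c_{\ell,\ell+1}$ applied, followed by a choice of the identity or the double braiding at level $m+1-\ell$. The \emph{target leaf} is $\ell=i-1$ followed by the identity: applied to $v\otimes x_d$ it is a composite of braiding isomorphisms with the identity, so it sends the nonzero homogeneous element $v\otimes x_d$ to a nonzero homogeneous element whose degree is exactly $T$.

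It then remains to prove that no other (leaf, component) pair contributes to degree $T$. An identity leaf of travel length $\ell$ places an unconjugated $p$ in slot $\ell+1$; since among the $V$-slots the word $T$ carries the value $p$ only in slot $i$ (the entries $p\trid p_k$ and $p_k$ being distinct from $p$ by the first part of \eqref{eq:degreeprop-}), this forces $\ell=i-1$ and, matching slot by slot, forces the component to be $d$, i.e.\ exactly the target leaf. For the double-braiding leaf with $\ell=i-1$ applied to $x_d$, the defect coordinate in slot $i$ equals $(p_ip_{i+1}\cdots p_{m+1})\trid p$, which reduces to $p_i\trid p$ because $p$ commutes with $p_{i+1},\dots,p_{m+1}$ by \eqref{eq:degreeprop+}; since $p_i\trid p\ne p$ by \eqref{eq:degreeprop+}, this misses $T$. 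The remaining double-braiding leaves from $x_d$ (those with $\ell\ne i-1$) are discarded again by $p\notin\{p_1,\dots,p_m\}$.

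The delicate point, which I expect to be the main obstacle, is that $Q_m(r_1,\dots,r_m,s)$ is not homogeneous, so one must also exclude double-braiding contributions coming from the non-leading components $x_e$, $e\ne d$, of $x$. Here I would first invoke that all homogeneous components of $x$ share the total $G$-degree $r_1\cdots r_m s$ (the braidings preserve it); this pins down the defect coordinate of every double-braiding leaf, and matching such a leaf to $T$ forces $x_e$ to carry the value $p$ in a prescribed slot together with a conjugacy relation, both of which are excluded precisely by the conditions $p\ne(p_{j+1}\cdots p_{m+1})^{-1}\trid p_j$, $1\le j<i$, of \eqref{eq:degreeprop-}. This exhaustive bookkeeping over all leaves and all components, where the full strength of \eqref{eq:degreeprop+} and \eqref{eq:degreeprop-} is consumed, is the step requiring the most care; once it is complete, the degree-$T$ component of $\varphi_{m+1}(v\otimes x)$ equals the nonzero target-leaf image, proving the claim.
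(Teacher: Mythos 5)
Your proposal follows essentially the same route as the paper's proof: both pass through Lemma~\ref{lem:X_n} to write $Q_{m+1}(p,r_1,\dots,r_m,s)=\varphi_{m+1}(V_p\otimes Q_m(r_1,\dots,r_m,s))$, enumerate the homogeneous degrees produced by the identity and double-braiding branches of $\varphi_{m+1}$, observe that the target degree arises from the branch $j=i$ applied to the component of degree $(p_1,\dots,p_{m+1})$, and then rule out every other (branch, component) contribution by exactly the case analysis you sketch, with \eqref{eq:degreeprop+} killing the $j=i$ and $j>i$ double-braiding branches and the two parts of \eqref{eq:degreeprop-} killing the rest. Your bookkeeping, including the attribution of the condition $p\ne(p_{j+1}\cdots p_{m+1})^{-1}\trid p_j$ to the double-braiding branches with $j<i$ (which necessarily involve shifted components), matches the paper's argument, and the total-degree invariance you invoke is a valid, mildly streamlining observation.
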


\begin{proof}
  Let $t\in Q_m(r_1,\dots ,r_m,s)$ and let $p\in \supp V$.
  By Lemma~\ref{lem:X_n}, $\varphi _{m+1}(v\otimes t)\in
  Q_{m+1}(p,r_1,\dots ,r_m,s)$ for all $v\in V_p$. Moreover,
  $\varphi _{m+1}(v\otimes t)$ is a sum of non-zero homogeneous tensors
	of degrees
  \begin{equation} \label{eq:Qm+1degrees}
	\begin{gathered}
    (p\trid p'_1,\dots ,p\trid p'_{j-1},p,p'_j,\dots ,p'_m,p'_{m+1}),\\
    (p\trid p'_1,\dots ,p\trid p'_{j-1},pp'_j\cdots p'_{m+1}\trid p,
    p\trid p'_j,\dots ,p\trid p'_m,p\trid p'_{m+1})
	\end{gathered}
\end{equation}
  with $1\le j\le m+1$,
  where $(p_1',\dots,p_m',p_{m+1}')\in \supp t$.
  By assumption on $p_1,\dots ,p_m,p_{m+1}$, the tuple
	\begin{equation}
		\label{eq:tuple}
    (p\triangleright p_1,\dots,p\triangleright
    p_{i-1},p,p_i,p_{i+1},\dots,p_m,p_{m+1})
	\end{equation}
  appears among the degrees in \eqref{eq:Qm+1degrees}.
  It suffices to show that it appears exactly once.
  We split the proof into several cases.

	Assume first that \eqref{eq:tuple} is equal to $(p\triangleright
  p_1',\dots,p\triangleright p_{j-1}',p,p_j',\dots,p_m',p'_{m+1})$ for some
	$j\in\{1,\dots,m+1\}$. There are three cases to consider.  First, if $j<i$,
	then $p\triangleright p_j=p$ and hence $p=p_j$, a contradiction to
  \eqref{eq:degreeprop-}. If $j=i$ then we obtain
	$p_l=p_l'$ for all $l\in\{1,\dots,m+1\}$ which gives us just the tuple we
  are looking at. Finally, if $j>i$, then $p=p_{j-1}$, again a contradiction
  to \eqref{eq:degreeprop-}.

	Now assume that \eqref{eq:tuple} is equal to 
  \begin{align} \label{eq:secondtuple}
    (p\trid p'_1,\dots ,p\trid p'_{j-1},pp'_j\cdots p'_{m+1}\trid p,
    p\trid p'_j,\dots ,p\trid p'_m,p\trid p'_{m+1})
  \end{align}
  for some $j\in \{1,\dots ,m+1\}$. Again there are three cases to consider.
	
  If $j>i$ then
  $$p\trid p'_k=p_k \text{ for all $k\in \{j,j+1,\dots ,m+1\}$,}\quad
  pp'_j\cdots p'_{m+1}\trid p=p_{j-1}.$$
  By \eqref{eq:degreeprop+}
  we conclude that $p'_k=p_k$ for all $k\in \{j,j+1,\dots ,m+1\}$ and
  $p=p_{j-1}$.

  If $j=i$ then
  $$p\trid p'_k=p_k \text{ for all $k\in \{i,i+1,\dots ,m+1\}$,}\quad
  pp'_i\cdots p'_{m+1}\trid p=p.$$
  We conclude from \eqref{eq:degreeprop+} that $p'_k=p_k$ for all $k\in
  \{i+1,\dots ,m+1\}$, $p\trid p'_i=p_i$ and $pp'_i\trid p=p$.
  This implies that
  $$p_i\trid p=(p\trid p'_i)\trid p=pp'_ip^{-1}\trid p=pp'_i\trid p=p,$$
  a contradiction to \eqref{eq:degreeprop+}.

  Finally, assume that $1\le j<i$. Then $pp'_j\cdots p'_{m+1}\trid p=p\trid
  p_j$, or
  $$ (p\trid p'_j)(p\trid p'_{j+1})\cdots (p\trid p'_{m+1})\trid p=p\trid
  p_j.$$
  We conclude from this and the equality of \eqref{eq:tuple} and
  \eqref{eq:secondtuple} that
  $$ (p\trid p_{j+1})\cdots (p\trid p_{i-1})pp_i\cdots p_{m+1}\trid p=p\trid
  p_j.$$
  The latter is equivalent to
  $$ pp_{j+1}\cdots p_{i-1}p_i\cdots p_{m+1}\trid p=p\trid p_j,$$
  which after cancelling $p\trid $ gives a contradiction to
  \eqref{eq:degreeprop-}.
\end{proof}

\begin{rem} 
	\label{rem:degrees0}
	If $m=0$ then Proposition~\ref{pro:degrees} reads as follows.
  Let $s\in \supp W$ and
	$p\in \supp V$ and assume that $s\trid p\not=p$. Then $(p,s)\in \supp
	Q_1(p,s)$. 
\end{rem}

\begin{cor} 
	\label{cor:size_suppV}
  Let $m\ge 1$.
  Assume that the following hold.
  \begin{enumerate}
    \item $\supp V$ is indecomposable or $\supp V=Y_1\cup Y_2$ is a decomposition
  into $\Inn (\supp V)$-orbits, and $x\trid Y_1=Y_2$, $x\trid Y_2=Y_1$ for
  all $x\in \supp W$.
    \item $(\ad V)^m(W)\ne 0$, $(\ad V)^{m+1}(W)=0$.
  \end{enumerate}
  Then $|\supp V|\le 2m-1$ if $\supp V$ and $\supp W$ commute,
  and $|\supp V|\le 2m$ otherwise.
\end{cor}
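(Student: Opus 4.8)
The plan is to argue by contraposition of Proposition~\ref{pro:degrees}, using $(\ad V)^{m+1}(W)=0$ to pin every element of $\supp V$ down to a short explicit list of witnesses. First I would invoke $(\ad V)^m(W)\ne 0$ together with Remark~\ref{rem:adVmW=0} to fix $r_1,\dots,r_m\in\supp V$, $s\in\supp W$ and a degree tuple $(p_1,\dots,p_m,p_{m+1})\in\supp Q_m(r_1,\dots,r_m,s)$, so that $p_1,\dots,p_m\in\supp V$ and $p_{m+1}\in\supp W$. Since $(\ad V)^{m+1}(W)=0$, Remark~\ref{rem:adVmW=0} gives $\supp Q_{m+1}(p,r_1,\dots,r_m,s)=0$ for every $p\in\supp V$; hence by Proposition~\ref{pro:degrees} the conditions \eqref{eq:degreeprop+} and \eqref{eq:degreeprop-} can never hold simultaneously.

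For each $p\in\supp V$ I would set $i(p)=\max\{j\in\{1,\dots,m+1\}:p_j\trid p\ne p\}$ whenever this set is nonempty. By maximality, \eqref{eq:degreeprop+} holds for $i=i(p)$, so \eqref{eq:degreeprop-} must fail, that is, $p\in\{p_1,\dots,p_m\}\cup\{q_j:1\le j<i(p)\}$, where $q_j=(p_{j+1}\cdots p_{m+1})^{-1}\trid p_j$; note that $q_j\in\supp V$ because $\supp V$ is conjugation invariant. Writing $R=\{p_1,\dots,p_m\}\cup\{q_1,\dots,q_m\}\subseteq\supp V$, I therefore obtain that every $p$ with $i(p)$ defined lies in $R$. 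The commuting/non-commuting dichotomy enters exactly here: if $\supp V$ and $\supp W$ commute then $p_{m+1}\trid p=p$ for all $p\in\supp V$, hence $i(p)\le m$ and only $q_1,\dots,q_{m-1}$ can occur, so $|R|\le 2m-1$; otherwise $|R|\le 2m$.

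It remains to handle the elements $p$ for which $i(p)$ is undefined, that is, the set $A=\{p\in\supp V:p_j\trid p=p\text{ for all }1\le j\le m+1\}$, and this is the step I expect to be the main obstacle. The idea is to show $A\subseteq C(R)$ in the sense of Lemma~\ref{lem:invsubset}: for $p\in A$ the relation $p_j\trid p=p$ says that $p$ commutes with $p_j$ as a group element, and a short computation with the quandle axioms then yields $p\trid q_j=q_j$ as well, so that $p$ fixes all of $R$ under $\trid$. Combined with $\supp V\setminus A\subseteq R$ this gives $\supp V=R\cup C(R)$, so Lemma~\ref{lem:invsubset} applies with $X=\supp V$ and $Y=R$ and shows that $R$ is invariant under $\Inn(\supp V)$. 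If $\supp V$ is indecomposable this forces $R=\supp V$, since $R$ is nonempty ($p_1\in R$), whence $|\supp V|=|R|$. In the two-orbit case of hypothesis~(1) I would instead observe that the swap $x\trid Y_1=Y_2$ places $p_m$ and $q_m=p_{m+1}^{-1}\trid p_m$ in different orbits, so the invariant set $R$ meets both $Y_1$ and $Y_2$ and therefore again equals $\supp V$. In all cases $|\supp V|=|R|$, and the two estimates on $|R|$ obtained in the previous paragraph give the claimed bounds.
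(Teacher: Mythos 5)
Your argument is correct and follows the paper's own proof essentially step for step: both contrapose Proposition~\ref{pro:degrees} at the maximal index $i$ with $p_i\trid p\ne p$ to conclude that every element of $\supp V$ outside the ($\le 2m$)-element witness set $\{p_j\}\cup\{(p_{j+1}\cdots p_{m+1})^{-1}\trid p_j\}$ lies in its centralizer set $C(Y)$, and then apply Lemma~\ref{lem:invsubset} together with hypothesis~(1) (using that $p_m$ and $p_{m+1}^{-1}\trid p_m$ land in different orbits in the decomposable case) to get $Y=\supp V$. The only differences are cosmetic: the paper records the commuting-case saving as $q_m=p_{m+1}^{-1}\trid p_m=p_m$ rather than as $i(p)\le m$, and it leaves implicit the verification you spell out, namely that $p_j\trid p=p$ for all $j$ forces $p\trid q_j=q_j$ because $\trid$ is conjugation.
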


\begin{proof}
  By (2), there exist $r_1,\dots ,r_m,p_1\dots,p_m\in \supp V$,
  $s',p_{m+1}\in \supp W$ and $t\in Q_m(r_1,\dots ,r_m,s')$ such that
  $(p_1,\dots ,p_{m+1})\in \supp t$ and $\varphi _{m+1}(v\otimes t)=0$
  for all $p\in \supp V$, $v\in V_p$. Let
   \begin{align*}
    Y=\{p_j\,|\,1\le j\le m\} \cup
    \{(p_{j+1}p_{j+2}\cdots p_{m+1})^{-1}\triangleright p_j\,|\,
    1\le j\le m\}.
  \end{align*}
  Then $|Y|\le 2m$. Moreover, if $r\trid s=s$ for all $r\in \supp V$, $s\in
  \supp W$, then
  $p_{m+1}^{-1}\trid p_m=p_m$ and hence $|Y|\le 2m-1$.
  Therefore it suffices to prove
  that $Y=\supp V$.

  By Proposition~\ref{pro:degrees}, any $p\in \supp V$ with
  $p\notin Y$ satisfies $p_j\trid p=p$ for all $j\in \{1,\dots ,m+1\}$ and hence
  $\supp V=Y\cup C(Y)$. By Lemma~\ref{lem:invsubset},
  $Y$ is $\Inn (\supp V)$-invariant. Thus $Y=\supp V$ if $\supp V$
  is indecomposable. If $\supp V$ is decomposable as in (1), then
  $p_m$ and $p_{m+1}^{-1}\trid p_m$ are in different components of $\supp V$
  by (1). Therefore the $\Inn (\supp V)$-invariance of $Y$ implies again that
  $Y=\supp V$.
\end{proof}

\begin{cor}
	\label{cor:COMM:degrees}
  Let $r_1,r_2,r_3,r_4\in \supp V$ and $s\in \supp W$. 
	Assume that $\supp V$ and $\supp W$ commute,
	$(r_3,r_4,s)\in\supp Q_2(r_3,r_4,s)$, and 
	\begin{gather}
		r_2\not\in\{r_3,r_4,r_4^{-1}\triangleright r_3\},\quad r_2\triangleright r_4\ne r_4,\\
		r_1\not\in\{r_2\triangleright r_3,r_2,r_4,r_4^{-1}\triangleright r_2,r_4^{-1}\triangleright r_3\},\quad
                r_1\triangleright r_4\ne r_4.
	\end{gather}
	Then $(\ad V)^4(W)\ne0$.
\end{cor}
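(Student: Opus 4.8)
The plan is to derive the conclusion by applying Proposition~\ref{pro:degrees} twice, starting from the hypothesis $(r_3,r_4,s)\in\supp Q_2(r_3,r_4,s)$ and enlarging the degree first by inserting $r_2$ and then by inserting $r_1$, so that in the end $\supp Q_4(r_1,r_2,r_3,r_4,s)\neq\emptyset$. Once a non-zero homogeneous component in $\supp Q_4(r_1,r_2,r_3,r_4,s)$ is produced, Remark~\ref{rem:adVmW=0} immediately yields $(\ad V)^4(W)\neq0$. Throughout I use that the supports are stable under $G$-conjugation, so they are subquandles of the conjugation quandle of $G$, in which $x\trid y=y$ is equivalent to $xy=yx$; in particular $x\trid y\neq y$ is equivalent to $y\trid x\neq x$.

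For the first application I take $m=2$, the degree $(p_1,p_2,p_3)=(r_3,r_4,s)$, the element $p=r_2$, and the index $i=2$. To verify \eqref{eq:degreeprop+}: from $r_2\trid r_4\neq r_4$ I get $r_4\trid r_2\neq r_2$, that is $p_2\trid p\neq p$; and since $\supp V$ and $\supp W$ commute, $p_3\trid p=s\trid r_2=r_2=p$. For \eqref{eq:degreeprop-} the only nontrivial term is $(p_2p_3)^{-1}\trid p_1=(r_4s)^{-1}\trid r_3$, which collapses to $r_4^{-1}\trid r_3$ after dropping the commuting factor $s$; thus \eqref{eq:degreeprop-} reads $r_2\notin\{r_3,r_4,r_4^{-1}\trid r_3\}$, exactly the first hypothesis. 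Proposition~\ref{pro:degrees} then gives $(r_2\trid r_3,r_2,r_4,s)\in\supp Q_3(r_2,r_3,r_4,s)$.

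For the second application I take $m=3$, the new degree $(p_1,p_2,p_3,p_4)=(r_2\trid r_3,r_2,r_4,s)$, the element $p=r_1$, and the index $i=3$. Condition \eqref{eq:degreeprop+} follows again from $r_1\trid r_4\neq r_4\Rightarrow r_4\trid r_1\neq r_1$ together with $s\trid r_1=r_1$. The bookkeeping for \eqref{eq:degreeprop-} is the only delicate point: besides $\{p_1,p_2,p_3\}=\{r_2\trid r_3,r_2,r_4\}$ one must evaluate $(p_3p_4)^{-1}\trid p_2=(r_4s)^{-1}\trid r_2$, which simplifies to $r_4^{-1}\trid r_2$, and $(p_2p_3p_4)^{-1}\trid p_1=(r_2r_4s)^{-1}\trid(r_2\trid r_3)$, which simplifies to $r_4^{-1}\trid r_3$ after using $r_2^{-1}\trid(r_2\trid r_3)=r_3$ and dropping $s$. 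Hence \eqref{eq:degreeprop-} becomes $r_1\notin\{r_2\trid r_3,r_2,r_4,r_4^{-1}\trid r_2,r_4^{-1}\trid r_3\}$, the second hypothesis, and Proposition~\ref{pro:degrees} produces a tuple in $\supp Q_4(r_1,r_2,r_3,r_4,s)$.

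The main obstacle is purely the algebraic bookkeeping in \eqref{eq:degreeprop-}: one must correctly evaluate the conjugation expressions $(p_{j+1}\cdots p_{m+1})^{-1}\trid p_j$ and recognize, using both the commutativity of $\supp V$ with $\supp W$ and the cancellation $r_2^{-1}\trid(r_2\trid r_3)=r_3$, that they collapse precisely to the elements excluded by the hypotheses. Beyond choosing the insertion positions $i=2$ and then $i=3$ (each time pointing at the entry $r_4$ that does not commute with the inserted element), no new idea is required.
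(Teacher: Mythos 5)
Your proposal is correct and follows exactly the paper's own argument: two successive applications of Proposition~\ref{pro:degrees} with $(m,i,p)=(2,2,r_2)$ and then $(3,3,r_1)$, using commutativity of the supports to drop $s$ and the cancellation $r_2^{-1}\trid(r_2\trid r_3)=r_3$ so that the excluded sets in \eqref{eq:degreeprop-} match the hypotheses verbatim. The bookkeeping of the conjugation expressions agrees with the paper's computation, so there is nothing to add.
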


\begin{proof}
	Let $(p_1,p_2,p_3)=(r_3,r_4,s)\in\supp Q_2(r_3,r_4,s)$. By assumption,
        Conditions \eqref{eq:degreeprop+}, \eqref{eq:degreeprop-}
	with $m=i=2$, $p=r_2$ are fulfilled: $r_4\trid r_2\not=r_2$, $s\trid r_2=r_2$ and
        $r_2\not\in \{r_3,r_4,r_4^{-1}\trid r_3\}$.
	Hence $(r_2\trid r_3,r_2,r_4,s)\in \supp Q_3(r_2,r_3,r_4,s)$ by Proposition \ref{pro:degrees}.

	Now let $(p_1,p_2,p_3,p_4)=(r_2\trid r_3,r_2,r_4,s)\in\supp Q_3(r_2,r_3,r_4,s)$. Then
        $$ (p_3p_4)^{-1}\trid p_2=r_4^{-1}\trid r_2,\quad
           (p_2p_3p_4)^{-1}\trid p_1=r_4^{-1}\trid r_3.
        $$
        By assumption,
        Conditions \eqref{eq:degreeprop+}, \eqref{eq:degreeprop-} with $m=i=3$, $p=r_1$ are fulfilled:
        $p_3\trid p\not=p$, $p_4\trid p=p$, and
	$p\not\in\{p_1,p_2,p_3,(p_3p_4)^{-1}\trid p_2,(p_2p_3p_4)^{-1}\trid p_1\}$.
	Hence $(r_1r_2\trid r_3,r_1\trid r_2,r_1,r_4,s)\in \supp Q_4(r_1,r_2,r_3,r_4,s)$ and therefore
        $(\ad V)^4(W)\not=0$ by Remark~\ref{rem:adVmW=0}.
\end{proof}

\begin{cor}
	\label{cor:COMM:size5}
  Assume that $|\supp V|\ge 5$, $\supp V$ and $\supp W$ commute, $(\ad
  V)(W)\not=0$ and
  $x\trid y\not=y$ for all $x,y\in \supp V$. Then $(\ad V)^4(W)\not=0$.
\end{cor}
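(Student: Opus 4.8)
The plan is to build, out of the hypotheses, exactly the data $(r_1,r_2,r_3,r_4,s)$ needed to apply Corollary~\ref{cor:COMM:degrees}, and then invoke it. Throughout I use that commutativity means $r\trid s=s$ and $s\trid r=r$ for $r\in\supp V$, $s\in\supp W$, and that the hypothesis $x\trid y\ne y$ says precisely that $x\trid y=y$ with $x,y\in\supp V$ forces $x=y$. First I would fix the seed in degree one. By Remark~\ref{rem:adVmW=0}, $(\ad V)(W)\ne0$ gives $r_4\in\supp V$, $s\in\supp W$ with $Q_1(r_4,s)\ne0$; since the double braiding sends $V_{r_4}\otimes W_s$ into $V_{(r_4\trid s)\trid r_4}\otimes W_{r_4\trid s}=V_{r_4}\otimes W_s$ by commutativity, $Q_1(r_4,s)\subseteq V_{r_4}\otimes W_s$ is a single graded component, whence $(r_4,s)\in\supp Q_1(r_4,s)$. (Remark~\ref{rem:degrees0} is unavailable here precisely because $s\trid r_4=r_4$; this grading computation takes its place.) Then, picking any $r_3\in\supp V\setminus\{r_4\}$ — available since $|\supp V|\ge5$ — the relations $r_4\trid r_3\ne r_3$ and $s\trid r_3=r_3$ let me apply Proposition~\ref{pro:degrees} with $m=i=1$, $p=r_3$, yielding $(r_3,r_4,s)\in\supp Q_2(r_3,r_4,s)$, which is the hypothesis of Corollary~\ref{cor:COMM:degrees}.

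It remains to choose $r_2$ and $r_1$. Both requirements $r_i\trid r_4\ne r_4$ reduce to $r_i\ne r_4$, so I only have to treat the two exclusion lists. The list for $r_2$, namely $\{r_3,r_4,r_4^{-1}\trid r_3\}$, has at most three members, so $|\supp V|\ge5$ leaves at least two admissible values of $r_2$. I would set $r_2^\ast=r_4\trid r_3$ and choose an admissible $r_2\ne r_2^\ast$. The key observation is that $r_3$ lies in the second list $\{r_2\trid r_3,r_2,r_4,r_4^{-1}\trid r_2,r_4^{-1}\trid r_3\}$ only through the equality $r_3=r_4^{-1}\trid r_2$: the others are ruled out by $r_2,r_4\ne r_3$ together with the fixed-point-free hypothesis, while $r_3=r_4^{-1}\trid r_2$ is equivalent to $r_2=r_2^\ast$, which I have excluded. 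Hence $r_3$ is not in the second list, and I may simply take $r_1=r_3$; then all conditions of Corollary~\ref{cor:COMM:degrees} hold and $(\ad V)^4(W)\ne0$.

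The delicate point, and the step I expect to be the main obstacle, is the boundary case $|\supp V|=5$. Every entry of the second exclusion list is a conjugate of an element of $\supp V$, hence lies in $\supp V$, so a crude count of five forbidden values leaves no room when $|\supp V|=5$. The two devices above are exactly what make $|\supp V|\ge5$ sufficient: taking $r_1=r_3$ removes the need to avoid a fresh fifth element, and the coincidence analysis shows $r_3$ is forbidden for only the single value $r_2=r_4\trid r_3$, so the two guaranteed admissible values of $r_2$ cannot both fail. Note that the whole argument avoids any indecomposability assumption on $\supp V$, so no appeal to Corollary~\ref{cor:size_suppV} is made — which is fortunate, since it would not settle $|\supp V|=5$ anyway.
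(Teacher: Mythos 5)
Your proof is correct and follows essentially the same route as the paper's: seed $(r_4,s)\in\supp Q_1(r_4,s)$, extend to $(r_3,r_4,s)$ via Proposition~\ref{pro:degrees}, choose $r_2\notin\{r_3,r_4,r_4^{-1}\trid r_3,r_4\trid r_3\}$ using $|\supp V|\ge5$, and take $r_1=r_3$ in Corollary~\ref{cor:COMM:degrees}. The only difference is that you spell out the grading argument for $\supp Q_1(r_4,s)=\{(r_4,s)\}$, which the paper leaves implicit.
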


\begin{proof}
  Since $(\ad V)(W)\not=0$, there exist $r_4\in \supp V$, $s\in \supp W$
  with $Q_1(r_4,s)\not=0$. Then $(r_4,s)=\supp Q_1(r_4,s)$. Let
  $r_3\in \supp V\setminus \{r_4\}$. Then $r_3\trid r_4\not=r_4$ and hence
  $(r_3,r_4,s)\in \supp Q_2(r_3,r_4,s)$ by Proposition~\ref{pro:degrees}.
  Since $|\supp V|\ge 5$, there exists $r_2\in \supp V$ with $r_2\notin
  \{r_3,r_4,r_4^{-1}\trid r_3,r_4\trid r_3\}$.
  By assumption, $r_2\trid r_4\not=r_4$. By construction,
  $$r_3\notin \{r_2\trid r_3,r_2,r_4,r_4^{-1}\trid r_2,r_4^{-1}\trid r_3\},
  \quad r_3\trid r_4\not=r_4.$$
  Thus Corollary~\ref{cor:COMM:degrees} with $r_1=r_3$ implies that $(\ad
  V)^4(W)\not=0$.
\end{proof}

\begin{cor}
	\label{cor:NC:degrees}
	Let $r_1,r_2,r_3\in \supp V$ and $s\in \supp W$. Assume that the following hold:
\begin{enumerate}
  \item $r_2\trid r_3\not=r_3$,
  \item $r_1\not\in\{r_3r_2\trid r_3,r_3\trid r_2,r_3,s^{-1}\trid r_3,s^{-1}\trid r_2\}$,
  \item $s\trid r_2,s\trid r_3\not\in \{r_2,r_3\}$, 
  \item $r_1\trid s\not=s$ or $r_1\trid r_3\not=r_3$.
\end{enumerate}
Then $(\ad V)^4(W)\ne0$.
\end{cor}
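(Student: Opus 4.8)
The plan is to build, one insertion at a time, a non-zero homogeneous component of some $Q_4$, and then to invoke Remark~\ref{rem:adVmW=0}. At each stage I apply Proposition~\ref{pro:degrees}, placing a suitably chosen element of $\supp V$ at the front of the argument of $Q$. Throughout I use that $\supp V$ and $\supp W$ are unions of conjugacy classes of $G$, so that their union is a crossed set under conjugation and $x\trid y=y$ is equivalent to $y\trid x=x$ for its elements; in particular hypothesis~(1) forces $r_2\ne r_3$, since otherwise $r_2\trid r_3=r_3\trid r_3=r_3$.

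For the base case, hypothesis~(3) gives $s\trid r_3\ne r_3$, so $(r_3,s)\in\supp Q_1(r_3,s)$ by Remark~\ref{rem:degrees0}. I then apply Proposition~\ref{pro:degrees} with $m=1$, $i=2$, $p=r_2$; here \eqref{eq:degreeprop+} and \eqref{eq:degreeprop-} reduce to $s\trid r_2\ne r_2$ and $r_2\notin\{r_3,s^{-1}\trid r_3\}$, which follow from hypotheses (1) and (3). This produces $(r_2\trid r_3,r_2,s)\in\supp Q_2(r_2,r_3,s)$. Applying the proposition once more with $m=2$, $i=3$, $p=r_3$ — the hypotheses now reducing, after a short conjugation computation, to $s\trid r_3\ne r_3$ and $r_3\notin\{r_2\trid r_3,r_2,s^{-1}\trid r_3,s^{-1}\trid r_2\}$, which hold by (1) and (3) — yields
\[
  (q_1,q_2,q_3,q_4):=(r_3\trid(r_2\trid r_3),\,r_3\trid r_2,\,r_3,\,s)\in\supp Q_3(r_3,r_2,r_3,s).
\]

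For the final insertion I take $p=r_1$ and split according to hypothesis~(4). If $r_1\trid s\ne s$, hence $s\trid r_1\ne r_1$, I use $i=4$; if $r_1\trid s=s$, then (4) forces $r_1\trid r_3\ne r_3$, equivalently $r_3\trid r_1\ne r_1$, and I use $i=3$. In both cases \eqref{eq:degreeprop+} at the chosen $i$ holds by the case assumption. A direct computation, using $q_3q_4=r_3s$ and $q_2q_3q_4=r_3r_2s$, shows that the conjugated terms $(q_{j+1}\cdots q_4)^{-1}\trid q_j$ of \eqref{eq:degreeprop-} collapse to $s^{-1}\trid r_2$ and $s^{-1}\trid r_3$, so that the full forbidden set becomes exactly the five-element set of hypothesis~(2); hence $r_1$ avoids it. Proposition~\ref{pro:degrees} then produces a non-zero element of $\supp Q_4(r_1,r_3,r_2,r_3,s)$, and Remark~\ref{rem:adVmW=0} gives $(\ad V)^4(W)\ne0$.

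I expect the main obstacle to be this last paragraph, namely the verification that the forbidden set of \eqref{eq:degreeprop-} for the final insertion coincides with the set in hypothesis~(2). This rests on the telescoping conjugation identities $(q_3q_4)^{-1}\trid q_2=s^{-1}\trid r_2$ and $(q_2q_3q_4)^{-1}\trid q_1=s^{-1}\trid r_3$. The other delicate point is to arrange the case split on hypothesis~(4) so that \eqref{eq:degreeprop+} is satisfied at the position $i$ one selects.
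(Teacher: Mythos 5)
Your proposal is correct and follows essentially the same route as the paper: the identical chain of insertions $Q_1(r_3,s)\to Q_2(r_2,r_3,s)\to Q_3(r_3,r_2,r_3,s)\to Q_4(r_1,r_3,r_2,r_3,s)$ via Proposition~\ref{pro:degrees}, with the same telescoping computations showing the forbidden set collapses to the set in hypothesis~(2). Your explicit case split on hypothesis~(4) (choosing $i=4$ or $i=3$) is just a spelled-out version of the paper's final sentence.
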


\begin{proof}
	Let $(p_1,p_2)=(r_3,s)$. Then $(p_1,p_2)\in\supp Q_1(r_3,s)$ since $s\trid r_3\not=r_3$.
        Conditions \eqref{eq:degreeprop+} and \eqref{eq:degreeprop-} with $m=1$, $i=2$
        and $p=r_2$ are fulfilled: $p_2\trid p\ne p$, $p=r_2\notin \{r_3,s^{-1}\trid r_3\}$.
	Thus
	\[
 	  (r_2\trid r_3,r_2,s)\in\supp Q_2(r_2,r_3,s)
	\]
        by Proposition \ref{pro:degrees}.

	Let now $(p_1,p_2,p_3)=(r_2\trid r_3,r_2,s)\in\supp Q_2(r_2,r_3,s)$.
        Then Conditions \eqref{eq:degreeprop+} and \eqref{eq:degreeprop-} with $m=2$, $i=3$
        and $p=r_3$ are fulfilled: $s\trid p\ne p$, 
	\[
	r_3\not\in\{p_1,p_2,p_3^{-1}\triangleright p_2,(p_2p_3)^{-1}\triangleright p_1\}
	=\{r_2\trid r_3,r_2,s^{-1}\trid r_2,s^{-1}\trid r_3\}.
	\]
	Hence
	$(r_3r_2\trid r_3,r_3\trid r_2,r_3,s)\in \supp Q_3(r_3,r_2,r_3,s)$
        by Proposition \ref{pro:degrees}.

	Finally, let $(p_1,p_2,p_3,p_4)=(r_3r_2\trid r_3,r_3\trid r_2,r_3,s)\in\supp
  Q_3(r_3,r_2,r_3,s)$
        and let $p=r_1$.
        Then
	\begin{equation*}
                p_4^{-1}\trid p_3=s^{-1}\trid r_3,\,
		(p_3p_4)^{-1}\trid p_2=s^{-1}\trid r_2,\, 
		(p_2p_3p_4)^{-1}\trid p_1=s^{-1}\trid r_3
	\end{equation*}
        and hence $p\notin \{p_1,p_2,p_3,p_4^{-1}\trid p_3,(p_3p_4)^{-1}\trid p_2,(p_2p_3p_4)^{-1}\trid p_1\}$.
        Since $p_4\trid p\not=p$ or $p_3\trid p\not=p$ by (4),
        Proposition~\ref{pro:degrees} with $m=3$ implies that
	$Q_4(r_1,r_3,r_2,r_3,s)\not=0$. Hence $(\ad V)^4(W)\ne 0$ by Lemma \ref{lem:T_n}.
\end{proof}

\begin{cor}
	\label{cor:classification}
	Assume that $\supp V$ is an indecomposable quandle and that $(\ad
	V)(W)\not=0$, $(\ad V)^4(W)=0$.  Then $\supp V$ is isomorphic to one of the
	quandles
	\begin{equation}
		\label{eq:quandles}
  	\{1\},(12)^{\Sym_3},(12)^{\Sym_4},(123)^{\Alt_4},
  	\Aff(5,2),\Aff(5,3),\Aff(5,4),(1234)^{\Sym_4}.
	\end{equation}
\end{cor}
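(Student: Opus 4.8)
The plan is to reduce the statement to the known classification of small indecomposable quandles recorded in Remark~\ref{rem:uptosix}, so that the only genuine work is to bound $|\supp V|$. First I would note that vanishing of the iterated adjoint action propagates upward: since $(\ad V)^{k+1}(W)=(\ad V)\bigl((\ad V)^k(W)\bigr)$, once some power vanishes all higher powers vanish as well. Combined with the hypotheses $(\ad V)(W)\ne 0$ and $(\ad V)^4(W)=0$, this produces a largest integer $m\in\{1,2,3\}$ such that $(\ad V)^m(W)\ne 0$ and $(\ad V)^{m+1}(W)=0$.

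Next I would invoke Corollary~\ref{cor:size_suppV} for this value of $m$. Its hypothesis~(2) is exactly the non-vanishing/vanishing pair just isolated, and its hypothesis~(1) holds because $\supp V$ is assumed to be an indecomposable quandle, which is the first alternative of that hypothesis. The conclusion of Corollary~\ref{cor:size_suppV} then yields $|\supp V|\le 2m$, with the sharper bound $|\supp V|\le 2m-1$ in the case that $\supp V$ and $\supp W$ commute. Since $m\le 3$, in every case we obtain $|\supp V|\le 6$.

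Finally, $\supp V$ is then an indecomposable quandle of size at most six, so by the classification in Remark~\ref{rem:uptosix} it must be isomorphic to one of $\{1\}$, $(12)^{\Sym_3}$, $(123)^{\Alt_4}$, $\Aff(5,2)$, $\Aff(5,3)$, $\Aff(5,4)$, $(12)^{\Sym_4}$, $(1234)^{\Sym_4}$. These are precisely the eight quandles appearing in \eqref{eq:quandles}, the two lists coinciding up to reordering, which establishes the corollary. I expect essentially no obstacle beyond correctly matching the hypotheses of Corollary~\ref{cor:size_suppV}: the substantive inequality $|\supp V|\le 2m$ has already been proved there, so what remains is the propagation argument for the choice of $m$ and a lookup against the quandle classification.
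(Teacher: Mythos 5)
Your proof is correct and follows the same route as the paper: the paper's own argument is exactly ``Corollary~\ref{cor:size_suppV} yields $|\supp V|\le 6$ and Remark~\ref{rem:uptosix} applies,'' and your extra care in isolating the maximal $m\in\{1,2,3\}$ with $(\ad V)^m(W)\ne 0$ and $(\ad V)^{m+1}(W)=0$ just makes explicit how hypothesis (2) of that corollary is met.
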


\begin{proof}
	Corollary~\ref{cor:size_suppV} yields that $|\supp V|\le 6$ and Remark
  \ref{rem:uptosix} applies.
\end{proof}

\subsection{Commuting supports}

Let $g,h\in G$.  Assume that $\supp V=\Og $, $\supp W=\Oh $, $G=\langle \Og
\cup \Oh \rangle $ and that $\Og $ and $\Oh $ commute.  We conclude an
implication of $(\ad V)^2(W)=0$, $(\ad W)^4(V)=0$ on $V$ and $W$.

\begin{lem}
	\label{lem:COMM:Oh_indecomposable}
  The quandles $\Og$ and $\Oh $ are indecomposable.
\end{lem}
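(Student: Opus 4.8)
The plan is to prove indecomposability of each class purely group-theoretically, without invoking the vanishing conditions $(\ad V)^2(W)=0$ or $(\ad W)^4(V)=0$; only the commuting of the supports and the generation hypothesis $G=\langle \Og\cup \Oh\rangle$ will be needed. Write $A=\langle \Og\rangle$ and $B=\langle \Oh\rangle$, so that $G=AB$ by Lemma~\ref{lem:decomposingG}. The inner group $\Inn(\Oh)$ is by definition the image of $B$ acting on $\Oh$ by conjugation, and indecomposability of $\Oh$ amounts to transitivity of this action. Since $\Oh$ is a single $G$-conjugacy class, it is a single orbit under $G$-conjugation, so it suffices to show that the $G$-orbit and the $B$-orbit of any $y\in \Oh$ coincide.

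First I would observe that $A$ centralizes $\Oh$ pointwise. Indeed, fix $y\in \Oh$; the set of elements of $G$ commuting with $y$ is a subgroup, and by the commuting assumption it contains $\Og$, hence it contains $A=\langle \Og\rangle$. Next, for an arbitrary $g=ab\in G$ with $a\in A$, $b\in B$, the element $byb^{-1}$ again lies in the conjugacy class $\Oh$, so $a$ commutes with it and therefore $gyg^{-1}=a(byb^{-1})a^{-1}=byb^{-1}$. Consequently $y^G=y^B$. Since $\Oh=y^G$, we get $\Oh=y^B$, i.e. $B$ acts transitively on $\Oh$, so $\Inn(\Oh)$ acts transitively and $\Oh$ is indecomposable. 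Interchanging the roles of $V$ and $W$ (equivalently, of $A$ and $B$) yields the same conclusion for $\Og$.

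There is no real obstacle here beyond bookkeeping: the one point that must be stated carefully is that $\Inn(\Oh)$ is exactly the image of $B=\langle \Oh\rangle$ in the symmetric group of $\Oh$, so that transitivity of the $B$-conjugation action is literally the definition of indecomposability. It is worth flagging explicitly that the argument uses neither the Nichols-algebra hypotheses nor finiteness of the classes; the commuting of $\Og$ and $\Oh$ collapses the contribution of the other generating subgroup to the conjugation action, reducing the $G$-orbit to the orbit under the class's own enveloping subgroup.
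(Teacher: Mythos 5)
Your proof is correct and follows essentially the same route as the paper's: both use Lemma~\ref{lem:decomposingG} to write $G=\langle\Og\rangle\langle\Oh\rangle$ and then use the commuting hypothesis to absorb the $\langle\Og\rangle$-contribution, so that $\Oh=G\trid h=\langle\Oh\rangle\trid h=\Inn(\Oh)\trid h$. The only cosmetic difference is that you centralize $\Oh$ pointwise by $\langle\Og\rangle$ to handle the factor order, while the paper simply puts $\langle\Og\rangle$ on the right acting on $h$ first.
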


\begin{proof}
	It is sufficient to prove the claim on $\Oh $.  By Lemma
	\ref{lem:decomposingG} and since $\Og $ and $\Oh $ commute, we obtain that
  $$\Oh =G\trid h=\langle \Oh \rangle \langle \Og \rangle \trid h
  =\langle \Oh \rangle \trid h=\Inn (\Oh )\trid h.$$
  Thus $\Oh $ is indecomposable.
\end{proof}

\begin{lem}
	\label{lem:COMM:adV2W,|Og|=1} 
	Assume that $(\ad V)(W)\not=0$ and $(\ad V)^2(W)=0$. Then $\Og=\{g\}$. 
\end{lem}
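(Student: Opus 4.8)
The plan is to prove that $|\Og|=1$ by assuming the contrary and deriving a contradiction with $(\ad V)^2(W)=0$. The assumption $(\ad V)(W)\neq 0$ gives, by Remark~\ref{rem:adVmW=0}, elements $r_1\in \supp V$ and $s\in \supp W$ with $Q_1(r_1,s)\neq 0$; by Remark~\ref{rem:degrees0} this means precisely that there exists a pair with $s\trid r_1\neq r_1$, and then $(r_1,s)\in \supp Q_1(r_1,s)$. My goal is to promote this to a nonzero element of $Q_2$, which by Remark~\ref{rem:adVmW=0} would contradict $(\ad V)^2(W)=0$. The tool for promotion is Proposition~\ref{pro:degrees} with $m=1$, $i=2$, applied with some $p=r_2\in \supp V$: it requires $s\trid r_2\neq r_2$ together with $r_2\notin\{r_1,\, s^{-1}\trid r_1\}$ (the set in \eqref{eq:degreeprop-} for $m=1$, $i=2$).

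First I would exploit the commuting hypothesis and indecomposability. Since $\Og$ is indecomposable by Lemma~\ref{lem:COMM:Oh_indecomposable}, and since $\supp V$ and $\supp W$ commute, for any fixed $s\in \supp W$ the element $s$ acts on $\supp V$ by the permutation $\varphi_s$; because $\Og$ and $\Oh$ commute, $\varphi_s$ commutes with all $\varphi_r$, $r\in\supp V$, and hence $\varphi_s$ is $\Inn(\Og)$-equivariant. The key observation is that the fixed-point set $\{r\in\supp V: s\trid r=r\}$ is then $\Inn(\Og)$-invariant; since $\Inn(\Og)$ acts transitively on the indecomposable quandle $\Og$ and since $s\trid r_1\neq r_1$ shows this fixed-point set is a proper subset, it must be empty. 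Thus $s\trid r\neq r$ for \emph{all} $r\in \supp V$. This is the heart of the argument and the step I expect to require the most care: establishing that a single non-fixed point forces, via equivariance and transitivity, that $s$ fixes no point of $\supp V$.

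With that in hand the rest is bookkeeping. If $|\Og|\geq 2$, I would choose $r_2\in \supp V$ avoiding the two-element (at most) excluded set $\{r_1,\, s^{-1}\trid r_1\}$; this is possible provided $|\supp V|\geq 3$, and the cases $|\supp V|=2$ must be handled separately using Remark~\ref{rem:smallcrossedsets}, which forces a trivial (commutative) crossed set of size two, contradicting $s\trid r\neq r$ for all $r$. Having chosen such an $r_2$, the fixed-point-free conclusion gives $s\trid r_2\neq r_2$, so both conditions \eqref{eq:degreeprop+} and \eqref{eq:degreeprop-} of Proposition~\ref{pro:degrees} hold, yielding
\[
  (r_2\trid r_1,\, r_2,\, s)\in \supp Q_2(r_2,r_1,s),
\]
and hence $(\ad V)^2(W)\neq 0$ by Remark~\ref{rem:adVmW=0}, a contradiction. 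Therefore $|\Og|=1$, i.e.\ $\Og=\{g\}$.

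The main obstacle is the passage from ``$s$ moves one point'' to ``$s$ moves every point''; everything else is selecting a suitable $r_2$ and invoking Proposition~\ref{pro:degrees}. I would be alert to the small-support degenerate cases, since the exclusion set in \eqref{eq:degreeprop-} has up to two elements and the choice of $r_2$ can fail when $\supp V$ is too small, and these must be dispatched by hand via the classification of small crossed sets.
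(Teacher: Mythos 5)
There is a genuine gap, and it sits exactly where you flagged the ``heart of the argument''. This lemma lives in the subsection whose standing hypothesis is that $\Og=\supp V$ and $\Oh=\supp W$ commute \emph{elementwise} in $G$ (this is how the paper uses the word throughout, e.g.\ in the proofs of Lemma~\ref{lem:COMM:Oh_indecomposable} and Corollary~\ref{cor:size_suppV}). Hence $s\trid r=srs^{-1}=r$ for every $r\in\Og$ and $s\in\Oh$: each $\varphi_s$ restricts to the identity on $\Og$. Your opening step --- reading Remark~\ref{rem:degrees0} as an equivalence and extracting from $(\ad V)(W)\neq 0$ a pair with $s\trid r_1\neq r_1$ --- is therefore impossible here. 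Remark~\ref{rem:degrees0} gives only a sufficient condition for $(p,s)\in\supp Q_1(p,s)$; the space $Q_1(r,s)$ can be nonzero even when $s\trid r=r$, because $c_{W,V}c_{V,W}$ can act nontrivially on $V_r\otimes W_s$ without moving the degree. Consequently your fixed-point argument starts from a false premise (the fixed-point set of $\varphi_s$ on $\Og$ is all of $\Og$, not a proper subset), and your final application of Proposition~\ref{pro:degrees} with $i=m+1=2$ requires $p_2\trid p=s\trid r_2\neq r_2$ in \eqref{eq:degreeprop+}, which can never hold in this setting.

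The paper's proof is a one-line appeal to Corollary~\ref{cor:size_suppV} with $m=1$, together with the indecomposability of $\Og$ from Lemma~\ref{lem:COMM:Oh_indecomposable}, and the mechanism there differs from yours in precisely the needed way: Proposition~\ref{pro:degrees} is applied with $i=1$, so the non-fixing condition in \eqref{eq:degreeprop+} is imposed on $p_1\in\supp V$ acting on $p\in\supp V$ (the requirement $p_2\trid p=p$ for $p_2\in\supp W$ is automatic by commutation), and the exclusion set in \eqref{eq:degreeprop-} collapses to the single point $\{p_1\}$ since $p_2^{-1}\trid p_1=p_1$. The vanishing of $(\ad V)^2(W)$ then forces $p_1\trid p=p$ for all $p\neq p_1$, so $\{p_1\}$ is $\Inn(\Og)$-invariant by Lemma~\ref{lem:invsubset}, and indecomposability gives $\Og=\{p_1\}$. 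In short: the contradiction must come from $\supp V$ acting on itself, not from $\supp W$ acting on $\supp V$.
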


\begin{proof}
  This follows from Corollary~\ref{cor:size_suppV} with $m=1$ using
  Lemma~\ref{lem:COMM:Oh_indecomposable}.
\end{proof}

\begin{pro} 
	\label{pro:COMM}
	Assume that $(\ad V)(W)\not=0$, $(\ad V)^2(W)=0$, and $(\ad W)^4(V)=0$.
  Then $\Og\cup\Oh$ is
	isomorphic to $Z_3^{3,1}$ or $Z_T^{4,1}$.
\end{pro}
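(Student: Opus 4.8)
The plan is to strip everything down to the single central element $g$ together with the indecomposable quandle $\Oh$, and then to pin down $\Oh$ among the short list of small indecomposable quandles. First I would invoke Lemma~\ref{lem:COMM:Oh_indecomposable} to record that $\Og$ and $\Oh$ are indecomposable, and Lemma~\ref{lem:COMM:adV2W,|Og|=1} (which uses $(\ad V)(W)\neq 0$ and $(\ad V)^2(W)=0$) to obtain $\Og=\{g\}$. Thus $g\in Z(G)$, so on $\Og\cup\Oh$ the map $\varphi _g$ is the identity while every $\varphi _y$ with $y\in\Oh$ fixes $g$. Hence $\Og\cup\Oh$ is the disjoint union of the central fixed point $g$ with the quandle $\Oh$, and it only remains to determine $\Oh$.

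Next I would transfer the nonvanishing hypothesis to the other side. By Lemma~\ref{lem:X_n} we have $(\ad V)(W)\cong(\id-c_{W,V}c_{V,W})(V\otimes W)$, so $(\ad V)(W)\neq 0$ says $c_{W,V}c_{V,W}\neq\id_{V\otimes W}$. Since $c_{V,W}$ is invertible and $c_{V,W}(c_{W,V}c_{V,W})=(c_{V,W}c_{W,V})c_{V,W}$, the operators $c_{W,V}c_{V,W}$ and $c_{V,W}c_{W,V}$ are conjugate; therefore $c_{V,W}c_{W,V}\neq\id_{W\otimes V}$, i.e.\ $(\ad W)(V)\neq 0$. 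Now let $m\in\{1,2,3\}$ be maximal with $(\ad W)^m(V)\neq 0$; such $m$ exists because $(\ad W)(V)\neq 0$ and $(\ad W)^4(V)=0$. Since $\supp W=\Oh$ is indecomposable and $\supp V$, $\supp W$ commute, Corollary~\ref{cor:size_suppV} (with the roles of $V$ and $W$ exchanged) gives $|\Oh|\le 2m-1\le 5$. By Remark~\ref{rem:uptosix} the indecomposable quandles of size at most five are $\{1\}$, $(12)^{\Sym_3}$, $(123)^{\Alt_4}$, $\Aff(5,2)$, $\Aff(5,3)$ and $\Aff(5,4)$.

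Finally I would eliminate the unwanted cases. If $\Oh=\{1\}$ then $g$ and $h$ are both central and generate $G$, contradicting that $G$ is non-abelian. The three affine quandles $\Aff(5,2),\Aff(5,3),\Aff(5,4)$ all have size five, and from the permutations listed in Remark~\ref{rem:uptosix} each $\varphi _i$ fixes only $i$ (they are affine over the field $\Z/5\Z$ with multiplier $\neq 1$), so $x\trid y\neq y$ for all $x\neq y$ in $\Oh$. Since $\supp V$ and $\supp W$ commute and $(\ad W)(V)\neq 0$, Corollary~\ref{cor:COMM:size5} (again with $V$ and $W$ exchanged) would then force $(\ad W)^4(V)\neq 0$, contradicting the hypothesis. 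Hence $\Oh\cong(12)^{\Sym_3}$ or $\Oh\cong(123)^{\Alt_4}$, and adjoining the central fixed point $g$ yields $\Og\cup\Oh\cong Z_3^{3,1}$ or $Z_T^{4,1}$ respectively. The main obstacle is the middle step: securing $(\ad W)(V)\neq 0$ so that the reflection machinery can be run on the $W$-side, and then matching each surviving candidate quandle precisely against the hypotheses of Corollaries~\ref{cor:size_suppV} and \ref{cor:COMM:size5}; once $(\ad W)(V)\neq 0$ is in hand, the elimination is essentially bookkeeping against Remark~\ref{rem:uptosix}.
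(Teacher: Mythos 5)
Your proposal is correct and follows essentially the same route as the paper: reduce to $\Og=\{g\}$ via Lemmas~\ref{lem:COMM:adV2W,|Og|=1} and \ref{lem:COMM:Oh_indecomposable}, bound $|\Oh|\le 5$ via Corollary~\ref{cor:size_suppV}, list the candidates via Remark~\ref{rem:uptosix}/Corollary~\ref{cor:classification}, and kill the three affine quandles of size five with Corollary~\ref{cor:COMM:size5}. Your explicit verification that $(\ad W)(V)\ne 0$ follows from $(\ad V)(W)\ne 0$ (via conjugacy of $c_{W,V}c_{V,W}$ and $c_{V,W}c_{W,V}$) is a welcome detail that the paper leaves implicit, but it does not change the argument.
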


\begin{proof}
  First, $\Og =\{g\}$ by Lemma~\ref{lem:COMM:adV2W,|Og|=1}.
  Further, $\Oh $ is indecomposable by Lemma~\ref{lem:COMM:Oh_indecomposable}.
  and $|\Oh |\ge 2$ since $G=\langle \Og \cup \Oh
  \rangle $ is non-abelian. Corollary~\ref{cor:size_suppV} implies
  that $|\Oh |\le 5$. Thus, by Corollary~\ref{cor:classification},
  $\Oh $ is isomorphic to one of the quandles $(12)^{\Sym_3}$,
  $(123)^{\Alt_4}$, $\Aff(5,2)$, $\Aff(5,3)$, $\Aff(5,4)$. Assume that $\Oh$ is
  one of the quandles $\Aff(5,2)$, $\Aff(5,3)$, $\Aff(5,4)$.
  Then $|\Oh |=5$ and $x\trid y\not=y$ for any $x,y\in \Oh $ with $x\not=y$.
  Thus $(\ad W)^4(V)\not=0$ by Corollary~\ref{cor:COMM:size5} and the
  proposition follows.
\end{proof}

\subsection{Non-commuting supports}

In this subsection let $g,h\in G$.  Assume that $gh\not=hg$, $\supp V=\Og $,
$\supp W=\Oh $, and $G=\langle \Og \cup \Oh \rangle $.  Then for all $s\in \Oh
$ there exists $r\in \Og $ with $rs\not=sr$. We determine consequences of the
equations $(\ad V)^2(W)=0$ and $(\ad W)^4(V)=0$.

\begin{lem}
	\label{lem:NC:varphi_h}
	Assume that $(\ad V)^2(W)=0$. Then the following hold.
  \begin{enumerate}
    \item $\Og $ is commutative. \label{it:Ogcomm}
    \item $\Og\ne\Oh$. \label{it:OgOh}
    \item $\Og =\langle \Oh \rangle \trid g$. \label{it:Oggeng}
    \item Let $s\in\Oh$. Then there exist $r_1,r_2\in\Og$
      such that $\varphi_s|_{\Og }=(r_1\, r_2)$. \label{it:phih_transp}
    \item $h^2\trid g=g$ and $(gh)^2=(hg)^2$. \label{it:ghgh}
    \item For all $m\in \Z $, $\{x\in \Og \,|\,x\trid (g^m\trid
      h)\not=g^m\trid h\}=\{g,h\trid g\}$. 
  \end{enumerate}
\end{lem}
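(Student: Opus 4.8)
The assumption $(\ad V)^2(W)=0$ is the master hypothesis, and the strategy is to feed it repeatedly into Proposition~\ref{pro:degrees} (and its specializations) with $m=1$, reading each non-vanishing $\supp Q_2$ as a contradiction. The six claims are not independent: the natural order is \eqref{it:Ogcomm}, then \eqref{it:OgOh}, then \eqref{it:Oggeng}, then \eqref{it:phih_transp}, then \eqref{it:ghgh}, then finally item~(6), with each part leaning on the previous ones. The main obstacle I expect is item~\eqref{it:phih_transp}: extracting the precise shape of $\varphi_s|_{\Og}$ (a single transposition) from a size bound requires combining the commutativity of $\Og$ from \eqref{it:Ogcomm} with the structural analysis of $\supp V$, so the work is in pinning down the cycle type rather than in any one calculation.

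\emph{Commutativity and $\Og\ne\Oh$.} For \eqref{it:Ogcomm} I would argue by contradiction: suppose $r\trid r'\ne r'$ for some $r,r'\in\Og$. The idea is that non-commutativity of $\Og$ lets one build a length-two chain in $\ad V$. Concretely, because $gh\ne hg$ there is some $s\in\Oh$ and $r\in\Og$ with $s\trid r\ne r$, so $(r,s)\in\supp Q_1(r,s)$ by Remark~\ref{rem:degrees0}; then using $r\trid r'\ne r'$ I would verify the hypotheses \eqref{eq:degreeprop+}--\eqref{eq:degreeprop-} of Proposition~\ref{pro:degrees} with $m=i=1$, $p=r'$, producing a non-zero degree in $\supp Q_2$ and contradicting Remark~\ref{rem:adVmW=0}. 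For \eqref{it:OgOh}, if $\Og=\Oh$ then $\supp V=\supp W$ is a single non-commutative class (since $gh\ne hg$), but \eqref{it:Ogcomm} forces $\Og$ commutative, a contradiction. For \eqref{it:Oggeng}, I would combine Lemma~\ref{lem:decomposingG} (so $G=AB$ with $A=\langle\Og\rangle$, $B=\langle\Oh\rangle$) with the observation that $\Og=G\trid g=\langle\Og\rangle\langle\Oh\rangle\trid g$; since $\Og$ is commutative by \eqref{it:Ogcomm}, the factor $\langle\Og\rangle$ acts trivially on $g$, leaving $\Og=\langle\Oh\rangle\trid g$.

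\emph{The transposition and the quadratic relations.} For \eqref{it:phih_transp} the plan is to apply Corollary~\ref{cor:size_suppV} with $m=1$: the hypothesis $(\ad V)^2(W)=0$ together with $(\ad V)(W)\ne0$ (which follows since $\Og,\Oh$ don't commute) bounds $|\supp V|$, and combined with the commutativity of $\Og$ this forces each $\varphi_s|_{\Og}$ to move at most two points; that it is exactly a transposition rather than the identity comes from choosing $s$ with $s\trid r\ne r$ for some $r$, and the crossed-set axiom then supplies the second moved point $r_2$ with $\varphi_s=(r_1\,r_2)$. For \eqref{it:ghgh}, I would read off $h^2\trid g=g$ directly from $\varphi_h^2=\id$ on $\Og$ (the transposition squares to the identity) together with \eqref{it:Oggeng}; the braid-type identity $(gh)^2=(hg)^2$ then follows by translating $h^2\trid g=g$ through the enveloping-group relation $hg=(h\trid g)h$, i.e. $hghg=h(h\trid g)hg=\cdots$, cancelling to the desired equality. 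Finally, item~(6) is the sharpened version of \eqref{it:phih_transp}: for each $m$ the element $g^m\trid h\in\Oh$ is moved, under $\varphi_x$ for $x\in\Og$, exactly when $x\in\{g,h\trid g\}$; I would verify this by noting $g$ itself moves $g^m\trid h$ (from non-commutativity, via \eqref{it:Oggeng}) and then using the crossed-set identities \eqref{eq:tril} together with the transposition structure to check that $h\trid g$ is the unique other element of $\Og$ that fails to centralize $g^m\trid h$, all remaining elements of $\Og$ fixing it.
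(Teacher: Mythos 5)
Your overall strategy (feeding $(\ad V)^2(W)=0$ into Proposition~\ref{pro:degrees} with $m=1$) is the right one, but the two load-bearing items are not correctly executed. The serious problem is item~\eqref{it:phih_transp}: Corollary~\ref{cor:size_suppV} does not apply here. Its hypothesis~(1) asks that $\supp V=\Og$ be indecomposable or decompose into exactly two $\Inn(\Og)$-orbits interchanged by every element of $\supp W$; but by your own item~\eqref{it:Ogcomm} the quandle $\Og$ is commutative and has at least two elements (as $h\trid g\ne g$), so its $\Inn$-orbits are singletons and neither alternative holds unless you already know $|\Og|=2$ and that every $s\in\Oh$ swaps the two points --- which is essentially the statement you are trying to prove. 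Moreover the bound $|\Og|\le 2$ that you want to extract is not yet true at this stage: the case $|\Og|=3$ survives until Lemma~\ref{lem:NC:2generated} and is only eliminated later, so any argument yielding $|\Og|\le 2$ here must be wrong. The paper proves \eqref{it:phih_transp} directly: fix $s\in\Oh$ and $r_1\in\Og$ with $s\trid r_1\ne r_1$, so $(r_1,s)\in\supp Q_1(r_1,s)$; for any $p\in\Og\setminus\{r_1,s^{-1}\trid r_1\}$ with $s\trid p\ne p$, Proposition~\ref{pro:degrees} with $m=1$, $i=2$ gives $\supp Q_2(p,r_1,s)\ne 0$, contradicting $(\ad V)^2(W)=0$; hence $\varphi_s|_{\Og}=(r_1\;s^{-1}\trid r_1)$, with no size bound on $\Og$ needed.

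There is a secondary gap in your item~\eqref{it:Ogcomm}, caused by putting it before \eqref{it:phih_transp}. Applying Proposition~\ref{pro:degrees} with $m=i=1$ and $p=r'$ requires, via \eqref{eq:degreeprop+}, that $p_2\trid p=p$, i.e.\ $s\trid r'=r'$, which you have not secured; if instead $s\trid r'\ne r'$ you must switch to $i=2$ and then rule out $r'\in\{r,s^{-1}\trid r\}$, and the sketch gives no way to close that case analysis. The paper avoids this entirely by proving \eqref{it:phih_transp} first: once $s\trid p=p$ for all $p\notin\{r_1,s^{-1}\trid r_1\}$, the $i=1$ case of the proposition forces $r_1\trid p=p$ for all such $p$, and bijectivity of $\varphi_{r_1}$ on $\Og$ then gives $\varphi_{r_1}|_{\Og}=\id$ for every $r_1\in\Og$. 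So the correct dependency is \eqref{it:phih_transp} before \eqref{it:Ogcomm}, the reverse of your ordering. Items (2), (3), (5) and (6) are fine once (1) and (4) are in place (your derivations there match the paper's, noting that $(gh)^2=(hg)^2$ needs the commutativity of $\Og$ in addition to $h^2\trid g=g$, which you do use implicitly).
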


\begin{proof}
	(4) and (1). First, $|\Og |\ge 2$ and $|\Oh |\ge 2$ since $gh\not=hg$.  Let
	$r_1\in\Og$ and $s\in\Oh$ such that $s\trid r_1\not=r_1$.  Then $(r_1,s)\in
	\supp Q_1(r_1,s)$ by Remark~\ref{rem:degrees0}.  Let $p\in \Og $. Assume
	that $p\notin \{r_1,s^{-1}\trid r_1\}$.  Since $Q_2(p,r_1,s)=0$ because of
	$(\ad V)^2(W)=0$, Proposition~\ref{pro:degrees} implies that $s\trid
	p=p=r_1\trid p$.  Then $\varphi_s|_{\Og }=(r_1\, s^{-1}\trid r_1)$ which is
	the claim in (4).  The equation $r_1\trid p=p$ implies
  that $r_1\trid r_2=r_2$ for all $r_2\in
  \Og $.  Thus \eqref{it:Ogcomm} holds.
  
  (2). If $\Og=\Oh $ then $G=\langle \Og \rangle $ is commutative by
  \eqref{it:Ogcomm}, a contradiction.

  (3). Lemma~\ref{lem:decomposingG} and (1) yield that
  $\Og =G\trid g=\langle \Oh \rangle \langle \Og \rangle \trid g=\langle
  \Oh \rangle \trid g$.

  (5). {}From (1) we know that $g\trid (h\trid g)=h\trid g$ and hence
  $hgh\trid g=h^2\trid g=g$, where the second equation follows from (4).
  This implies (5).

  (6). By (1), $\Og $ is commutative. Thus it
  suffices to prove the claim for $m=0$. The latter follows from
  (4) with $s=h$ since $gh\not=hg$.
\end{proof}

\begin{lem}
	\label{lem:induction}
	Assume that $(\ad V)^2(W)=0$ and that $h$ commutes with $g\trid h$.
  Then the following hold.
  \begin{enumerate}
    \item For all $m\in \Z $, $(h\trid g)\trid (g^m\trid h)=g^{m+1}\trid h$.
    \item $\langle\Og\rangle\triangleright h=\langle g\rangle\triangleright h$.
  \end{enumerate}
\end{lem}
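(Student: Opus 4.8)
The plan is to derive (1) by induction on $m\in\Z$ anchored at $m=0$, and then to read off (2) from (1) together with Lemma~\ref{lem:NC:varphi_h}(6). Throughout I would abbreviate $a=h\trid g\in\Og$ and $b_m=g^m\trid h\in\Oh$; since $\trid$ is conjugation, $g\trid b_m=b_{m+1}$ for every $m$, so that (1) is precisely the assertion $a\trid b_m=b_{m+1}$, and $g\trid(-)$ is a bijection of $\Oh$.

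First I would dispose of the base case $m=0$ by observing that it is a restatement of the hypothesis. Expanding the conjugations gives $(h\trid g)\trid h=hghg^{-1}h^{-1}$ and $g\trid h=ghg^{-1}$, and these coincide exactly when $h(ghg^{-1})=(ghg^{-1})h$, that is, when $h$ commutes with $g\trid h$. For the inductive step I would invoke self-distributivity of the conjugation quandle together with the commutativity of $\Og$ from Lemma~\ref{lem:NC:varphi_h}\eqref{it:Ogcomm}, which yields $a\trid g=g$. Then
\[
  a\trid b_{m+1}=a\trid(g\trid b_m)=(a\trid g)\trid(a\trid b_m)=g\trid(a\trid b_m),
\]
so the hypothesis $a\trid b_m=b_{m+1}$ propagates upward to $a\trid b_{m+1}=g\trid b_{m+1}=b_{m+2}$. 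Running the same identity backwards and cancelling the bijection $g\trid(-)$ from $g\trid(a\trid b_{m-1})=a\trid b_m=g\trid b_m$ gives $a\trid b_{m-1}=b_m$, so the statement extends to all negative $m$ as well. This establishes (1).

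For (2) I would put $B=\langle g\rangle\trid h=\{b_m\mid m\in\Z\}$; the inclusion $B\subseteq\langle\Og\rangle\trid h$ is immediate since $g\in\Og$. To get the reverse inclusion it suffices to show that every element of $\Og$ and every inverse of such an element maps $B$ into $B$. By Lemma~\ref{lem:NC:varphi_h}(6) the only elements of $\Og$ that move a given $b_m$ are $g$ and $a=h\trid g$, and both send $b_m$ to $b_{m+1}\in B$, the first by definition and the second by (1). Every other $r\in\Og$ fixes $b_m$ under conjugation, hence commutes with it as a group element, so $r^{-1}\trid b_m=b_m$ as well; and for the two exceptional elements $g^{-1}\trid b_m=b_{m-1}$ and $a^{-1}\trid b_m=b_{m-1}$ by (1). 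Thus $\Og\cup\Og^{-1}$ stabilizes $B$, whence $\langle\Og\rangle\trid h\subseteq B$ and equality follows.

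The only genuinely delicate point is the treatment of the inverse conjugations in (2): since $B$ may a priori be infinite one cannot argue that an injective self-map is surjective. The resolution is the remark that an $r\in\Og$ fixing $b_m$ actually commutes with $b_m$, so its inverse fixes $b_m$ too; combined with the explicit backward formulas for $g$ and $a$ coming from (1), this closes the argument without any finiteness assumption on $\supp W$.
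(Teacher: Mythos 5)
Your proposal is correct and follows essentially the same route as the paper: part (1) rests on the commutativity of $\Og$ (so that $g$, equivalently $g^m$, fixes $h\trid g$ under conjugation) together with the $m=0$ case being a restatement of the hypothesis, and part (2) reduces $\langle\Og\rangle\trid h$ to $\langle g\rangle\trid h$ by checking the action of each generator of $\Og$ and its inverse, which is the paper's word-length induction in disguise. The only cosmetic difference is that you invoke Lemma~\ref{lem:NC:varphi_h}(6) at every $g^m\trid h$ where the paper uses item (4) at $h$ and then commutes elements of $\Og$ past $\langle g\rangle$; both handle the inverses correctly.
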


\begin{proof}
	First we prove (1). By Lemma~\ref{lem:NC:varphi_h}\eqref{it:Ogcomm}, $\Og $
	is commutative. Thus it suffices to consider the case $m=0$. Now $(h\trid
	g)\trid h=hg\trid h=g\trid h$ by assumption.

	Now we prove (2). Lemma~\ref{lem:NC:varphi_h}(4) and (1) with $m\in
	\{-1,0\}$, imply that
  \[
  (\Og{})^{\pm 1}\trid h\subseteq \{h\}\cup
  \{g,g^{-1},h\triangleright g,(h\trid g)^{-1}\} \trid h
  \subseteq \{h,g\triangleright h,g^{-1}\triangleright h\}.
	\]
  Now write $\langle\Og\rangle=\cup_{m\in\N_0}A_m$, where $A_m=\{x_1^{\pm1}\cdots
	x_m^{\pm1}\mid x_i\in\Og\}$. It suffices to show that $A_m\trid h\subseteq
  \langle g\rangle \trid h$ for all $m\in \N_0$.
  We proceed by induction on $m$.
  The case $m=0$ is trivial and the case $m=1$ was just proven.
  Let now $m\in \N $ and assume that
  $A_m\trid h\subseteq \langle g\rangle \trid h$. 
	Using the induction hypothesis and the fact that $\Og$ is commutative, see
  Lemma~\ref{lem:NC:varphi_h}(1), we obtain that 
	\begin{align*}
    A_{m+1}\triangleright h &=(\Og){}^{\pm 1}\trid (A_m\trid h)\\
    &\subseteq(\Og)^{\pm 1}\trid (\langle g\rangle \trid h)
    = \langle g\rangle \trid ((\Og)^{\pm 1}\trid h)
    \subseteq \langle g\rangle \trid h.
	\end{align*}
	This implies (2). 
\end{proof}

\begin{lem}
	\label{lem:OgUOh=D4}
	Assume that $(\ad V)^2(W)=0$ and that $\Oh$ is commutative.
	Then $\Og\cup\Oh$ is isomorphic to $Z_2^{2,2}$.
\end{lem}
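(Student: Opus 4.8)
The plan is to determine $\Og\cup\Oh$ completely by first using what $(\ad V)^2(W)=0$ already yields, then extracting a size bound from the commutativity of $\Oh$, and finally recognizing the dihedral quandle. First I would collect the consequences of Lemma~\ref{lem:NC:varphi_h}: the class $\Og$ is commutative by \eqref{it:Ogcomm}, and setting $g'=h\trid g=hgh^{-1}$ we have $g'\ne g$ because $gh\ne hg$. Since $h^2\trid g=g$ by \eqref{it:ghgh}, also $h\trid g'=g$, so conjugation by $h$ interchanges $g$ and $g'$. On the other side, commutativity of $\Oh$ gives $h\trid(g\trid h)=g\trid h$, i.e.\ $h$ commutes with $g\trid h$, so Lemma~\ref{lem:induction} applies. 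Writing $G=\langle\Og\rangle\langle\Oh\rangle$ by Lemma~\ref{lem:decomposingG} and noting that $\langle\Oh\rangle$ centralizes $h$ (as $\Oh$ is commutative), I obtain $\Oh=G\trid h=\langle\Og\rangle\trid h$, which by Lemma~\ref{lem:induction}(2) equals $\langle g\rangle\trid h=\{g^m\trid h\mid m\in\Z\}$.

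The crux is the bound $|\Oh|=2$; this is the main obstacle, since the hypothesis $(\ad V)^2(W)=0$ restricts $\supp V=\Og$ but says nothing directly about $\supp W=\Oh$, so the size bound for $\Oh$ must be squeezed out of its own commutativity. I would introduce $c=[h,g]=g'g^{-1}$. Because $\Og$ is commutative, $g$ and $g'$ commute, which gives $gc=cg=g'$; and since $h$ swaps $g$ and $g'$ one computes $hch^{-1}=c^{-1}$. Using $hg'=gh$ one checks $g\trid h=hc$, and then a one-line induction with $gcg^{-1}=c$ yields $g^m\trid h=hc^m$ for all $m\in\Z$, so that $\Oh=\{hc^m\mid m\in\Z\}$. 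Now I impose commutativity of $\Oh$: from $hch^{-1}=c^{-1}$ one gets $(hc^i)(hc^j)=h^2c^{\,j-i}$, and equating this with $(hc^j)(hc^i)=h^2c^{\,i-j}$ forces $c^{2(j-i)}=1$, hence $c^2=1$. As $c\ne1$ (because $gh\ne hg$), the element $c$ has order two, so $\Oh=\{h,hc\}$ and $|\Oh|=2$; moreover $c$ commutes with $g$ and, since $c^{-1}=c$, also with $h$, so $c$ is central in $G=\langle g,h\rangle$. Feeding this back into $\Og=\langle\Oh\rangle\trid g$ from Lemma~\ref{lem:NC:varphi_h}\eqref{it:Oggeng}, and using that $c$ is central while $h$ swaps $g,g'$, the orbit of $g$ is exactly $\{g,g'\}$, so $|\Og|=2$ as well.

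Finally I would assemble the quandle. At this point $\Og=\{g,g'\}$ and $\Oh=\{h,hc\}$ are two disjoint $\Inn(\Og\cup\Oh)$-orbits, each of size two and each commutative. Every element of $\Oh$ acts on $\Og$ as the transposition $(g\ g')$, and by $g^m\trid h=hc^m$ with $c^2=1$ every element of $\Og$ acts on $\Oh$ as the transposition $(h\ hc)$. Hence any bijection $\Og\to\Oh$ is a quandle isomorphism, and Lemma~\ref{lem:about_quandles} with $n=2$, $Y_1=\Og$, $Y_2=\Oh$ identifies $\Og\cup\Oh$ with the quandle of \eqref{eq:permutations} for $n=2$, namely the dihedral quandle $\D_4$. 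This is precisely $Z_2^{2,2}$, as desired.
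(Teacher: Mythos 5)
Your proof is correct and follows essentially the same route as the paper: both reduce $\Oh$ to $\langle g\rangle\trid h$ via Lemmas~\ref{lem:decomposingG} and \ref{lem:induction}(2), then use $h^2\trid g=g$ and the commutativity of $\Oh$ to cut this orbit down to two elements (your relation $c^2=1$ is exactly the paper's $g^2\trid h=h$), and finally compute $\Og=\langle\Oh\rangle\trid g=\{g,h\trid g\}$ symmetrically. The commutator bookkeeping with $c=[h,g]$ and the appeal to Lemma~\ref{lem:about_quandles} at the end are only cosmetic variations.
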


\begin{proof}
	Lemma~\ref{lem:NC:varphi_h}\eqref{it:phih_transp} implies that $ghg\trid h=h$.  
	Since $\Oh$ is commutative, $hg\triangleright h=g\triangleright h$ and hence
	$h=ghg\triangleright h=g^2\triangleright h$.  Therefore
	$$ \Oh= \langle \Og\rangle \langle \Oh \rangle \trid h
  =\langle \Og \rangle \trid h=\{h,g\triangleright h\}$$
  by Lemmas \ref{lem:decomposingG} and \ref{lem:induction}(2)
  and since $\Oh $ is commutative.
	Recall that $\Og $ is commutative by Lemma~\ref{lem:NC:varphi_h}\eqref{it:Ogcomm}
	and that $h^2\trid g=g$ by Lemma \ref{lem:NC:varphi_h}\eqref{it:ghgh}.
  From Lemma \ref{lem:decomposingG} we obtain 
	\[
		\Og=\langle\Oh\rangle\langle\Og\rangle\triangleright g
		=\langle\Oh\rangle\triangleright g
		=\langle h,g\trid h\rangle\trid g\subseteq \langle g,h\rangle \trid g=\{h\triangleright g,g\}.
	\]
	Therefore $\Og =\{h\trid g,g\}$ and $\Og\cup\Oh\simeq Z_2^{2,2}$ as quandles.
\end{proof}

\begin{lem}
	\label{lem:commuting_elements}
	Let $x,y\in\Oh$ such that $x\trid y=y$. Assume that
	$y\trid z\not=z$ for all $z\in \Oh\setminus\{x,y\}$,
	and that $\varphi_x|_{\Og}=(r\,s)$ for some $r,s\in\Og$, $r\not=s$.
  Then $\varphi_y|_{\Og}=(r\,s)$.
\end{lem}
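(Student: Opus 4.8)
The plan is to show that $\varphi_y|_{\Og}$, which is a transposition by Lemma~\ref{lem:NC:varphi_h}\eqref{it:phih_transp}, must coincide with $(r\,s)$. First I would record that $x\trid y=y$ forces $y\trid x=x$, since $\Og\cup\Oh$ is a union of conjugacy classes and hence a crossed set. The quandle identity $\varphi_{y\trid x}=\varphi_y\varphi_x\varphi_y^{-1}$ then gives $\varphi_x=\varphi_y\varphi_x\varphi_y^{-1}$, so $\varphi_x$ and $\varphi_y$ commute as permutations of $\Og\cup\Oh$; in particular their restrictions to $\Og$ commute. Two commuting transpositions are either equal or have disjoint support, so it remains to exclude the possibility $\varphi_y|_{\Og}=(r'\,s')$ with $\{r',s'\}\cap\{r,s\}=\emptyset$.

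To rule out the disjoint case I would produce an element of $\Oh\setminus\{x,y\}$ that is fixed by $y$, contradicting the hypothesis that $y\trid z\ne z$ for all such $z$. Set $x_1=r\trid x\in\Oh$. Since $\Og$ is commutative by Lemma~\ref{lem:NC:varphi_h}\eqref{it:Ogcomm}, the permutation $\varphi_r$ acts trivially on $\Og$, whence $\varphi_{x_1}|_{\Og}=\varphi_r\varphi_x\varphi_r^{-1}|_{\Og}=\varphi_x|_{\Og}=(r\,s)$; in the disjoint case this already shows $x_1\ne y$. Moreover $x_1\ne x$, because $x_1=x$ would mean $x\trid r=r$, contradicting $\varphi_x|_{\Og}=(r\,s)$. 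Finally I would compute $y\trid x_1$ by self-distributivity: $y\trid(r\trid x)=(y\trid r)\trid(y\trid x)$. Here $y\trid r=r$ because $r\notin\{r',s'\}$, and $y\trid x=x$ as noted above, so $y\trid x_1=r\trid x=x_1$. Thus $x_1\in\Oh\setminus\{x,y\}$ is fixed by $y$, the desired contradiction, and therefore $\{r',s'\}=\{r,s\}$.

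The only inputs are the crossed-set axiom, the two standard quandle identities, and the structural facts from Lemma~\ref{lem:NC:varphi_h} (that $\Og$ is commutative and that every element of $\Oh$ acts on $\Og$ as a transposition), all of which rest on $(\ad V)^2(W)=0$. The main subtlety is the reduction via commutativity: without first observing that $\varphi_x|_{\Og}$ and $\varphi_y|_{\Og}$ commute, a transposition $\varphi_y|_{\Og}$ sharing exactly one point with $(r\,s)$ would not be excluded, and the distributivity computation of $y\trid x_1$ would break down, since $y\trid r=r$ would fail. Once the equal-or-disjoint dichotomy is in place the remaining computation is short and direct, so I expect no serious obstacle.
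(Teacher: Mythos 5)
Your argument is essentially the paper's: both prove that $\varphi_y|_{\Og}$ is a transposition commuting with $\varphi_x|_{\Og}=(r\,s)$, reduce to excluding the case of disjoint supports, and in that case exhibit $r\trid x$ as an element of $\Oh\setminus\{x,y\}$ fixed by $\varphi_y$, contradicting the hypothesis. The one point you should repair is that you justify two steps by Lemma~\ref{lem:NC:varphi_h}\eqref{it:Ogcomm} and \eqref{it:phih_transp}, which require $(\ad V)^2(W)=0$ --- a hypothesis that Lemma~\ref{lem:commuting_elements} does not assume (it is deliberately omitted there, unlike in the neighbouring lemmas). Both uses are avoidable, as in the paper's proof: $\varphi_y|_{\Og}$ is a transposition simply because $y$ is conjugate to $x$ in $G$ and conjugation by a group element permutes the conjugacy class $\Og$; and without commutativity of $\Og$ one still computes $\varphi_{r\trid x}|_{\Og}=\varphi_r\varphi_x\varphi_r^{-1}|_{\Og}=(r\;\;r\trid s)$, which moves $r$ and hence cannot equal a transposition disjoint from $(r\,s)$, so $r\trid x\ne y$ and the rest of your distributivity computation goes through unchanged.
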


\begin{proof}
  Since $x,y\in \Oh $ and $\varphi _x|_{\Og }=(r\,s)$,
  there exist $a,b\in \Og $ such that $\varphi _y|_{\Og
  }=(a\,b)$. Assume that $(a\,b)\not=(r\,s)$. Then $|\{r,s,a,b\}|=4$
  since $\varphi _x|_{\Og }$ and $\varphi_y|_{\Og}$
  commute. Let
	$z=r\triangleright x$.
  First, $z=r\trid x\not=x$ since $x\trid r\not=r$.
  Second, $r\trid x\not=y$ since $\varphi _z|_{\Og }=(r\,r\trid s)\not=(a\,b)$.
	Hence $y\trid z\not=z$ by assumption, a contradiction to $y\trid (r\trid
  x)=(y\trid r)\trid (y\trid x)=r\trid x$. 
\end{proof}

\begin{lem}
	\label{lem:NC:2generated}
  Let $x,y\in \Oh $, $\psi _x=\varphi _x|_{\Og }$ and $\psi _y=\varphi
  _y|_{\Og }$. Assume that $(\ad V)^2(W)=0$ and that $x,y$ generate the
  quandle $\Oh$. If $\psi _x=\psi _y$ then $|\Og |=2$. Otherwise
  $|\Og |=3$ and $\psi _x \psi _y\not=\psi _y\psi _x$.
\end{lem}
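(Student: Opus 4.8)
The plan is to analyze the restriction map $\psi\colon s\mapsto\psi_s=\varphi_s|_{\Og}$ from $\Oh$ into $\Sym(\Og)$ and to recover $|\Og|$ purely from the action of $\langle\psi_x,\psi_y\rangle$ on $\Og$. The first observation is that $\psi$ is a quandle homomorphism into the conjugation quandle of $\Sym(\Og)$: since $\Og$ is a conjugacy class of $G$, conjugation by any $s\in\Oh\subseteq G$ permutes $\Og$, so $\psi_s$ is well defined, and the identity $\varphi_{s\trid t}=\varphi_s\varphi_t\varphi_s^{-1}$ in $\Inn G$ restricts to $\psi_{s\trid t}=\psi_s\psi_t\psi_s^{-1}$ (and similarly for $\tril$). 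By Lemma~\ref{lem:NC:varphi_h}\eqref{it:phih_transp}, each $\psi_s$ is a transposition of two elements of $\Og$.

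Next I would exploit the hypothesis that $x,y$ generate the quandle $\Oh$. Because $\psi$ is a quandle homomorphism, every $\psi_s$ with $s\in\Oh$ is obtained from $\psi_x,\psi_y$ by iterated conjugation and hence lies in $\langle\psi_x,\psi_y\rangle$; thus the image of $\langle\Oh\rangle$ under the conjugation action on $\Og$ is exactly $H:=\langle\psi_x,\psi_y\rangle$. On the other hand, Lemma~\ref{lem:NC:varphi_h}\eqref{it:Oggeng} gives $\Og=\langle\Oh\rangle\trid g$, so $\Og$ is a single $H$-orbit; in particular $H$ acts transitively on $\Og$. The entire statement now reduces to a combinatorial analysis of a group generated by two transpositions acting transitively on a set.

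Finally I would split into cases. If $\psi_x=\psi_y=(r\,s)$, then $H=\{\id,(r\,s)\}$ has orbits $\{r,s\}$ and singletons, so transitivity together with $r,s\in\Og$ forces $\Og=\{r,s\}$ and $|\Og|=2$. If $\psi_x\ne\psi_y$, two transpositions are either disjoint (and commuting) or share exactly one point (and non-commuting). In the disjoint case $\psi_x=(r\,s)$, $\psi_y=(a\,b)$ with $r,s,a,b$ pairwise distinct in $\Og$, the $H$-orbits are $\{r,s\}$, $\{a,b\}$ and singletons; these are distinct orbits, so $H$ cannot act transitively on a set containing all four points, contradicting transitivity. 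Hence $\psi_x\psi_y\ne\psi_y\psi_x$, and $\psi_x=(r\,s)$, $\psi_y=(s\,t)$ with $r,s,t$ distinct; then $H=\langle(r\,s),(s\,t)\rangle\simeq\Sym_3$ has unique nontrivial orbit $\{r,s,t\}$, and transitivity forces $\Og=\{r,s,t\}$, so $|\Og|=3$. The main point to get right is the passage from ``$x,y$ generate the quandle $\Oh$'' to ``$\langle\psi_x,\psi_y\rangle$ acts transitively on $\Og$'', via the homomorphism property of $\psi$ and Lemma~\ref{lem:NC:varphi_h}\eqref{it:Oggeng}; once transitivity is secured, the transposition bookkeeping is routine, and the disjoint-transposition case is precisely where commutativity is excluded.
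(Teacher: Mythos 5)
Your proof is correct and follows essentially the same route as the paper: the paper likewise reduces everything to the identity $\Og=\langle\Oh\rangle\trid g_1=\langle x,y\rangle\trid g_1$ (i.e.\ transitivity of $\langle\psi_x,\psi_y\rangle$ on $\Og$, via Lemma~\ref{lem:decomposingG} and the commutativity of $\Og$) combined with the fact that $\psi_x,\psi_y$ are transpositions, ruling out the disjoint case by an orbit count. Your version merely makes explicit the quandle-homomorphism bookkeeping and the final case analysis that the paper leaves as ``two calculations similar to \eqref{eq:Og}''.
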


\begin{proof}
  By Lemma~\ref{lem:NC:varphi_h}\eqref{it:Ogcomm} and \eqref{it:phih_transp},
  $\Og$ is commutative and there
	exist $g_1,g_2\in\Og$ such that
	$\psi_x=(g_1\;x\triangleright g_1)$ and
	$\psi_y=(g_2\;y\triangleright g_2)$.
	Assume now that $|\{g_1,x\triangleright g_1,g_2,y\triangleright g_2\}|=4$.
	Then $|\Og |\ge 4$.
  On the other hand, Lemma~\ref{lem:decomposingG} and the commutativity of
  $\Og$ imply that
  \begin{align} \label{eq:Og}
	  \Og=G\triangleright g_1
    =\langle \Oh \rangle \langle\Og\rangle\triangleright g_1
    =\langle\Oh\rangle\triangleright g_1
	  =\langle x,y\rangle \trid g_1=\{g_1,x\triangleright g_1\},
  \end{align}
  a contradiction to $|\Og |\ge 4$. Hence
	$|\{g_1,x\triangleright g_1,g_2,y\triangleright g_2\}|\le 3$
  and the lemma
  follows by two calculations similar to \eqref{eq:Og}.
\end{proof}

\begin{lem}
	\label{lem:NC:decomposing_Oh}
	Assume that $(\ad V)^2(W)=0$, $\Oh$ is decomposable and let
	$\Oh=Y_1\cup\dots\cup Y_k$ be the decomposition of $\Oh$ into orbits of
	the inner group of $\Oh$.  Then $k=2$ and $x\triangleright Y_1=Y_2$,
	$x\triangleright Y_2=Y_1$ for all $x\in \Og $.
\end{lem}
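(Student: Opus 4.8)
The plan is to translate the statement into the language of the conjugation action of $G$ on the class $\Oh$. Write $A=\langle\Og\rangle$ and $B=\langle\Oh\rangle$. By Lemma~\ref{lem:NC:varphi_h}\eqref{it:Ogcomm} the quandle $\Og$ is commutative, so any two elements of $\Og$ commute in $G$ and hence $A$ is abelian; moreover $G=AB$ by Lemma~\ref{lem:decomposingG}. Conjugation by an element of $G$ is a quandle automorphism of $\Oh$ and therefore permutes the $\Inn(\Oh)$-orbits $Y_1,\dots,Y_k$, giving a homomorphism $\sigma\colon G\to\Sym(\{Y_1,\dots,Y_k\})$. Since each generator $\varphi_x$ with $x\in\Oh$ of $\Inn(\Oh)$ preserves every orbit by definition, $\sigma(B)=1$; and since $G$ acts transitively on $\Oh=h^G$, the image $\sigma(G)=\sigma(A)$ is transitive on $\{Y_1,\dots,Y_k\}$.

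Next I would show that \emph{all} elements of $\Og$ induce the same permutation of the orbit set. Given $x\in\Og$, write $x=w\trid g$ with $w=ab$, $a\in A$, $b\in B$; then $b\trid g\in\Og\subseteq A$, so the abelian element $a$ commutes with it and $x=a\trid(b\trid g)=b\trid g$. Consequently $\sigma(x)=\sigma(b)\sigma(g)\sigma(b)^{-1}=\sigma(g)$. Therefore $\sigma(A)=\langle\sigma(g)\rangle$ is cyclic and transitive, so the (faithful) action is regular; in particular the stabiliser of $Y_1$ is trivial and $k=\mathrm{ord}(\sigma(g))$. It then remains to prove $\sigma(g)^2=\mathrm{id}$, for which it suffices to show $g^2\trid h\in Y_1=\langle\Oh\rangle\trid h$. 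Using $(gh)^2=(hg)^2$ from Lemma~\ref{lem:NC:varphi_h}\eqref{it:ghgh} one obtains $h(g\trid h)=(g^{-1}\trid h)h$, that is, $g^{-1}\trid h=h\trid(g\trid h)$; applying $\varphi_g$ gives $h=(g\trid h)\trid(g^2\trid h)$ and hence $g^2\trid h=(g\trid h)^{-1}\trid h$. As $g\trid h\in\Oh$, this lies in $\langle\Oh\rangle\trid h=Y_1$, so $\sigma(g)^2$ fixes $Y_1$ and, by regularity, $\sigma(g)^2=\mathrm{id}$. Thus $k\le 2$; since $\Oh$ is decomposable, $k=2$, and $\sigma(g)=(Y_1\,Y_2)$ yields $x\trid Y_1=Y_2$ and $x\trid Y_2=Y_1$ for every $x\in\Og$.

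The conceptual heart, and the step I expect to be the main obstacle, is the observation that every $x\in\Og$ acts on the orbit set exactly as $g$ does: this collapses a potentially $(\Z/2)^d$-type regular action of the abelian group $A$ into a \emph{cyclic} one, which is what forces the single cycle $\sigma(g)$ and, together with $g^2\trid h\in Y_1$, pins down $k=2$. The remaining input, the identity $g^2\trid h\in Y_1$, is then only a short manipulation of $(gh)^2=(hg)^2$, so the real work lies in setting up the action $\sigma$ and exploiting the abelianness of $A$ via the decomposition $G=AB$.
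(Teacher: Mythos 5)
Your proof is correct and follows essentially the same route as the paper's: you show that all elements of $\Og$ permute the $\Inn(\Oh)$-orbits in the same way as $g$ (the paper gets this directly from $\Og=\langle\Oh\rangle\trid g$, i.e.\ Lemma~\ref{lem:NC:varphi_h}\eqref{it:Oggeng}, which you re-derive from $G=AB$ and the commutativity of $\Og$), and then use $(gh)^2=(hg)^2$ to conclude that $g^2$ fixes the orbit of $h$, so transitivity forces $k\le 2$ and decomposability gives $k=2$. The permutation-representation language is a cosmetic repackaging of the paper's argument, not a genuinely different proof.
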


\begin{proof}
  First, $\Oh=g\triangleright\Oh=(g\triangleright
		Y_1)\cup\cdots\cup(g\triangleright Y_k)$
  is a decomposition into $\Inn (\Oh )$-orbits:
  $(g\trid y)\trid (g\trid Y_i)=g\trid (y\trid Y_i)=g\trid Y_i$ for all $y\in
  \Oh $, $1\le i\le k$.
	Thus $\varphi_g$ permutes the orbits $Y_1,\dots,Y_k$.
  Since $\Og =\langle \Oh \rangle \trid g$ by
  Lemma~\ref{lem:NC:varphi_h}\eqref{it:Oggeng},
  each $x\in \Og$ permutes the $\Inn (\Oh )$-orbits $Y_1,\dots ,Y_k$ in the
  same way as $g$ does.
  Let $Y\subset \Oh $ be the $\Inn (\Oh )$-orbit of $h$.  
  As $G$ is generated by $\Og \cup \Oh $ and $\Oh $ is a
  conjugacy class of $G$, we conclude that
	\[
		\Oh=Y\cup(g\triangleright Y)\cup\cdots\cup(g^{k-1}\triangleright Y).
	\]
  By Lemma \ref{lem:NC:varphi_h}\eqref{it:ghgh}, $ghg\triangleright h=h$ and
	hence $h\in Y\cap(g^2\triangleright Y)$. Thus $g^2\triangleright Y=Y$ and
	hence $\Oh=Y\cup(g\triangleright Y)$ since $\Oh $ is decomposable.
\end{proof}

\begin{lem}
	\label{lem:NC:Oh_is_decomposable}
	Assume that $(\ad V)^2(W)=0$, $(\ad W)^4(V)=0$ and that $\Oh$ is
	decomposable. Then $\Og\cup\Oh$ is isomorphic to $Z_2^{2,2}$ or to
	$Z_4^{4,2}$.
\end{lem}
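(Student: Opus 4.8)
The plan is to combine the structural lemmas of this subsection with the size and non-vanishing estimates. First I would invoke Lemma~\ref{lem:NC:decomposing_Oh} to write $\Oh=Y_1\cup Y_2$ as the union of its two $\Inn(\Oh)$-orbits, with $x\trid Y_1=Y_2$ and $x\trid Y_2=Y_1$ for every $x\in\Og$; fix $h\in Y_1$. From Lemma~\ref{lem:NC:varphi_h} I record that $\Og$ is commutative and that $\varphi_s|_{\Og}$ is a transposition for every $s\in\Oh$, and since $\Og$ interchanges $Y_1$ and $Y_2$ we have $g\trid h\ne h$, so $(\ad V)(W)\ne0$ and $(\ad W)(V)\ne0$ by Remark~\ref{rem:degrees0}. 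If $\Oh$ is commutative, then Lemma~\ref{lem:OgUOh=D4} already gives $\Og\cup\Oh\simeq Z_2^{2,2}$, so from now on I would assume $\Oh$ non-commutative and aim at $Z_4^{4,2}$.

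Next I would bound the sizes. Any $x\in\Og$ restricts to a bijection $Y_1\to Y_2$, so $|Y_1|=|Y_2|$; if some $|Y_i|=1$ then $|\Oh|=2$ and $\Oh$ is commutative, hence $|\Oh|=2|Y_1|\ge4$. Because $\Oh$ is decomposable with $\Og$ interchanging its two orbits, Corollary~\ref{cor:size_suppV} applies with the roles of $V$ and $W$ exchanged: taking $m$ maximal with $(\ad W)^m(V)\ne0$ (so $m\le3$) it yields $|\Oh|\le 2m\le6$. I would then show each orbit is commutative: a non-commutative orbit has $|Y_i|\ge3$, forcing $|\Oh|=6$, and in that case one produces an element of $\supp Q_4$ for the pair $(W,V)$ via Corollary~\ref{cor:NC:degrees}, whose hypotheses are met because $|\Oh|=6>5$ leaves room in \eqref{eq:degreeprop-}, contradicting $(\ad W)^4(V)=0$. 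With $Y_1$ commutative and $\varphi_g|_{Y_1}\colon Y_1\to Y_2$ a quandle isomorphism, Lemma~\ref{lem:about_quandles} identifies $\Oh$ with the size-$2n$ quandle whose two orbits are commutative and in which each element acts as an $n$-cycle on the other orbit, $n=|Y_1|\in\{2,3\}$; the case $n=3$ is discarded exactly as above, leaving $|\Oh|=4$.

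Finally I would determine $\Og$ and its action on $\Oh$. For $n=2$ the quandle $\Oh$ is generated by one element of each orbit, so Lemma~\ref{lem:NC:2generated} gives $|\Og|\in\{2,3\}$, and Lemma~\ref{lem:commuting_elements} collapses the transpositions $\varphi_s|_{\Og}$ to at most the two coming from $Y_1$ and $Y_2$. There remain finitely many candidate structures for $\Og\cup\Oh$ of size $4+|\Og|$, distinguished by $|\Og|\in\{2,3\}$ and by whether each $\varphi_g|_{\Oh}$ ($g\in\Og$) is a $4$-cycle or a product of two transpositions interchanging the orbits. All of them except $|\Og|=2$ with $\varphi_g|_{\Oh}$ a $4$-cycle (equivalently $g^2\trid h\ne h$) are to be eliminated by exhibiting a non-zero component of $\supp Q_4$ for the pair $(W,V)$, contradicting $(\ad W)^4(V)=0$; the surviving structure is exactly $Z_4^{4,2}$, and the Examples of Section~\ref{section:groups} then identify $G$ with a quotient of $\Gamma_4$.

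The main obstacle is the elimination of these spurious configurations when $|\Oh|=4$. For supports of size at least five Corollary~\ref{cor:NC:degrees} applies off the shelf, but when $|\Oh|=4$ its hypothesis already forbids five of the four available elements, so it cannot be invoked; instead one must iterate Proposition~\ref{pro:degrees} by hand, at each step choosing the inserted element $p$ so that the forbidden set in \eqref{eq:degreeprop-} does not exhaust $\Oh$. The subtle fact is that such a chain closes up to a non-zero element of $\supp Q_4$ precisely for the double-transposition actions (and for $|\Og|=3$), and fails to close up for the genuine $4$-cycle action; this is exactly what singles out $Z_4^{4,2}$ and excludes the spurious size-$4$ quandle whose every $\varphi_i$ is a product of two transpositions.
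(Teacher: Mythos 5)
Your overall skeleton matches the paper's: decompose $\Oh=Y_1\cup Y_2$ via Lemma~\ref{lem:NC:decomposing_Oh}, bound $|\Oh|\le 6$ by applying Corollary~\ref{cor:size_suppV} to the pair $(W,V)$, kill the case of non-commutative orbits (necessarily $|\Oh|=6$) with Corollary~\ref{cor:NC:degrees}, and use Lemma~\ref{lem:about_quandles} to reduce to $|Y_1|\in\{1,2,3\}$, with $|Y_1|=1$ handled by Lemma~\ref{lem:OgUOh=D4} and $|Y_1|=3$ again by Corollary~\ref{cor:NC:degrees}. One caution: for $|Y_1|=3$ the elements fed to Corollary~\ref{cor:NC:degrees} cannot be chosen ``exactly as above'', since with $Y_1$ commutative you must take $r_2$ and $r_3$ in \emph{different} orbits to satisfy $r_2\trid r_3\ne r_3$; the paper takes $r_1,r_2\in Y_1$ and $r_3\in Y_2$ there, the opposite distribution from the non-commutative case.

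The genuine gap is in the decisive case $|Y_1|=2$. You enumerate candidate structures ($|\Og|\in\{2,3\}$; $\varphi_g|_{\Oh}$ a $4$-cycle or a product of two transpositions) and propose to eliminate all but the $4$-cycle, $|\Og|=2$ configuration by hand-iterating Proposition~\ref{pro:degrees} to a non-zero element of $Q_4$ for $(W,V)$ --- but you never carry this out, and you yourself flag it as ``the subtle fact'' and ``the main obstacle''. As written, the case that actually produces $Z_4^{4,2}$ is unproven. Moreover, no $Q_4$ computation is needed: the consequences of $(\ad V)^2(W)=0$ already recorded in Lemma~\ref{lem:NC:varphi_h} settle it. Part \eqref{it:phih_transp} says $\varphi_y|_{\Og}$ is a transposition for each $y\in\Oh$; since every $x\in\Og$ swaps $Y_1$ and $Y_2$ we have $x\trid y\ne y$, hence $y\trid x\ne x$ for all $x\in\Og$ (crossed set), so this transposition is fixed-point free and $|\Og|=2$ outright --- no appeal to Lemma~\ref{lem:NC:2generated} and no elimination of $|\Og|=3$ is required. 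Part \eqref{it:ghgh} gives $ghg\trid h=h$, while the structure of $\Oh$ supplied by Lemma~\ref{lem:about_quandles} gives $hg\trid h=h\trid(g\trid h)\ne g\trid h$; together these force $g^2\trid h\ne h$, so $\varphi_g|_{\Oh}$ is a $4$-cycle and $\varphi_{h\trid g}=\varphi_g^{-1}$, which is exactly $Z_4^{4,2}$. In other words, the ``spurious'' double-transposition quandle is excluded by $(\ad V)^2(W)=0$ alone, and your proposed chain argument, besides being left unverified, is aimed at the wrong hypothesis.
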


\begin{proof}
	By Lemma \ref{lem:NC:decomposing_Oh}, $\Oh=Y_1\cup Y_2$, where $Y_1$ and
	$Y_2$ are the $\Inn(\Oh)$-orbits of $\Oh $. Moreover, $x\trid Y_1=Y_2$ and
	$x\trid Y_2=Y_1$ for all $x\in \Og $. Thus Corollary~\ref{cor:size_suppV}
	implies that $|\Oh|\leq6$. There are two cases to consider. 

	Assume first that $Y_1$ is non-commutative. Then $Y_1\simeq
	Y_2\simeq(12)^{\Sym_3}$ by Remark~\ref{rem:smallcrossedsets}.  Let $r_3\in
	Y_1$, $r_2\in Y_1\setminus\{r_3\}$ and $r_1\in
	Y_2\setminus\{g^{-1}\triangleright r_3,g^{-1}\triangleright r_2\}$.  By
	Corollary \ref{cor:NC:degrees} with $s=g$, $(\ad W)^4(V)\ne0$, a
	contradiction.

	Assume now that $Y_1$ is commutative. By Lemma \ref{lem:about_quandles},
	the permutations $\varphi_i$ defining $\Oh$ are given by
	\eqref{eq:permutations}. Further, $x\trid y\not=y$ and hence $y\trid
  x\not=x$ for all $x\in \Og $, $y\in \Oh $.
  But $\varphi _y|_{\Og }$ is a transposition for all $y\in \Oh $ by
  Lemma~\ref{lem:NC:varphi_h}\eqref{it:phih_transp}, and hence $|\Og |=2$.
  
  If $|Y_1|=1$ then $\Oh $ is commutative and
  $\Og\cup\Oh\simeq Z_2^{2,2}$ by Lemma~\ref{lem:OgUOh=D4}.
	Suppose next that $|Y_1|=2$. Then $\Oh\simeq Z_2^{2,2}$ by
  Lemma~\ref{lem:about_quandles}. Let $h'\in Y_1$ with $h'\not=h$.
  Since $ghg\trid h=h$ by Lemma~\ref{lem:NC:varphi_h}\eqref{it:ghgh}
  and since $hg\trid h\not=g\trid h$, we conclude that $g^2\trid h\not=h$,
  $\varphi _g=(h\,g\trid h\,h'\,g\trid h')$, and
  $\varphi _{h\trid g}=\varphi _g^{-1}$.
  Therefore $\Og \cup \Oh \simeq Z_4^{4,2}$.

	Finally, assume that $|Y_1|=3$. Let $r_2\in Y_1$.
  Then $r_2\triangleright x\ne x$
	for all $x\in Y_2$, by Lemma \ref{lem:about_quandles}. Now take $r_3\in
	Y_2\setminus\{g\triangleright r_2,g^{-1}\triangleright r_2\}$ and
	$r_1\in Y_1\setminus\{r_3\triangleright r_2,g^{-1}\triangleright r_3\}$.
  Then $(\ad W)^4(V)\ne0$ by Corollary \ref{cor:NC:degrees} with $s=g$,
  a contradiction.
\end{proof}

\begin{lem}
	\label{lem:123^A4}
	Assume that $h^2\trid g=g$. Then $h^2\trid (g\trid h)=g\trid h$.  In
	particular, $\Oh$ is not isomorphic to any of $(123)^{\Alt_4}$,
  $\Aff (5,2)$ and $\Aff (5,3)$.
\end{lem}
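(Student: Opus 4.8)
The first assertion is a one-line computation using self-distributivity. Since $\Og\cup\Oh$ consists of conjugacy classes, $\trid$ is conjugation, so the identity $a\trid(b\trid c)=(a\trid b)\trid(a\trid c)$ holds for arbitrary group elements, not only for elements of the quandle. The plan is to apply it with $a=h^2$, $b=g$, $c=h$, giving
$$ h^2\trid(g\trid h)=(h^2\trid g)\trid(h^2\trid h). $$
The hypothesis $h^2\trid g=g$ takes care of the first factor, and $h^2\trid h=h\trid(h\trid h)=h$ takes care of the second, so the right-hand side equals $g\trid h$.

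For the \emph{in particular} part I would argue by a fixed-point count for $\varphi_h^2$ acting on $\Oh$. The element $g\trid h=ghg^{-1}$ lies in $\Oh$ since it is conjugate to $h$, and it differs from $h$ precisely because $gh\ne hg$ in the non-commuting setting. The two identities above show that $\varphi_h^2$ fixes both $h$ (trivially) and $g\trid h$, and since $\varphi_h$ preserves the conjugacy class $\Oh$ we have $\varphi_h^2|_{\Oh}=(\varphi_h|_{\Oh})^2$. Hence $\varphi_h^2|_{\Oh}$ has at least two distinct fixed points, $h$ and $g\trid h$.

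It then remains to inspect the cycle types in Remark~\ref{rem:uptosix}: in $(123)^{\Alt_4}$ each $\varphi_i$ is a $3$-cycle, and in $\Aff(5,2)$ and $\Aff(5,3)$ each $\varphi_i$ is a $4$-cycle. In all three cases $\varphi_i$ fixes exactly the single point $i$, and so does its square, since the square of a $3$-cycle is again a $3$-cycle and the square of a $4$-cycle is a product of two transpositions. Thus if $\Oh$ were isomorphic to one of these quandles, $\varphi_h^2|_{\Oh}$ would fix only $h$, contradicting the second fixed point $g\trid h\ne h$. The argument is essentially routine once the first identity is in place; the only point needing care is the observation that $g\trid h$ is a genuinely new fixed point of $\varphi_h^2$ inside $\Oh$, namely that it both lies in $\Oh$ and is distinct from $h$. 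I would also remark that $\Aff(5,4)$ is correctly excluded from the list: there each $\varphi_i$ is an involution, so $\varphi_i^2=\id$ fixes every point and the count yields no contradiction.
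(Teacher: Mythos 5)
Your proof is correct and takes essentially the same route as the paper's: the identity $h^2\trid(g\trid h)=g\trid h$ follows from (self-)distributivity, so $h$ and $g\trid h$ are two distinct fixed points of $\varphi_h^2|_{\Oh}$ (distinct because $gh\ne hg$ in this subsection), while in $(123)^{\Alt_4}$, $\Aff(5,2)$ and $\Aff(5,3)$ the square of each $\varphi_i$ fixes only $i$ by the cycle types listed in Remark~\ref{rem:uptosix}. Your closing observation explaining why $\Aff(5,4)$ escapes this argument is a correct and worthwhile sanity check.
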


\begin{proof}
  The first claim follows from the definition of a quandle.
  Since $h$ and $g\trid h$ are fixed points of $\varphi _h^2|_{\Oh }$,
  the second claim follows from Remark~\ref{rem:uptosix}.
\end{proof}

\begin{lem}
	\label{lem:NC:affine5}
	Assume that $(\ad V)^2(W)=0$ and $(\ad W)^4(V)=0$. Then
	$\Oh$ is not isomorphic to $\Aff(5,4)$.
\end{lem}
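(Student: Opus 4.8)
The plan is to argue by contradiction. Assuming $\Oh\simeq\Aff(5,4)$, I would produce three elements $r_1,r_2,r_3\in\Oh$ and apply Corollary~\ref{cor:NC:degrees} to the pair $(W,V)$ with $s=g$, thereby concluding $(\ad W)^4(V)\ne0$ and contradicting the hypothesis. This is the same device already used in Lemma~\ref{lem:NC:Oh_is_decomposable}: the corollary is invoked with the roles of $V$ and $W$ interchanged, so that $r_1,r_2,r_3$ range over $\supp W=\Oh$ and $s$ over $\supp V=\Og$, and its conclusion becomes $(\ad W)^4(V)\ne0$. Note that $\varphi_g|_{\Oh}$ is a quandle automorphism of $\Oh$, and it is non-trivial since $gh\ne hg$ forces $g\trid h\ne h$.

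The structural input I would isolate is that $\varphi_g|_{\Oh}$ moves at least four of the five elements of $\Oh$. Indeed $\Aff(5,4)$ is the affine quandle on $\Z/5$ with $i\trid j=2i-j$, and any two distinct elements generate it as a quandle; hence a non-trivial automorphism fixes at most one element. Writing $M$ for the set of elements moved by $\varphi_g|_{\Oh}$, this gives $|M|\ge4$. I would then pick any $r_2\in M$ and choose $r_3\in M\setminus\{r_2,\varphi_g(r_2),\varphi_g^{-1}(r_2)\}$, which is possible because we exclude at most three elements from a set of size at least four. By construction $r_2\ne r_3$, both are moved by $\varphi_g$, and $\varphi_g(r_2)\ne r_3$, $\varphi_g(r_3)\ne r_2$; these four inequalities are exactly condition~(3) of Corollary~\ref{cor:NC:degrees} for $s=g$. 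Finally I set $r_1=r_2$: condition~(1) reduces to $r_2\ne r_3$, and condition~(4) follows from $r_1\trid r_3=r_2\trid r_3\ne r_3$, again because $r_2\ne r_3$.

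The heart of the argument is condition~(2), namely that $r_2$ avoids the five forbidden elements $\{r_3\trid(r_2\trid r_3),\,r_3\trid r_2,\,r_3,\,g^{-1}\trid r_3,\,g^{-1}\trid r_2\}$, evaluated inside $\Oh$ via the identity $r_3r_2\trid r_3=r_3\trid(r_2\trid r_3)$. The first three equal $3r_3-2r_2,\,2r_3-r_2,\,r_3$ in the affine coordinates, and each differs from $r_2$ precisely because $r_2\ne r_3$ (here $2$ and $3$ are invertible modulo $5$); the last two are $\varphi_g^{-1}(r_3)$ and $\varphi_g^{-1}(r_2)$, which differ from $r_2$ exactly because condition~(3) gives $\varphi_g(r_2)\ne r_3$ and $\varphi_g(r_2)\ne r_2$. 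Thus all hypotheses of Corollary~\ref{cor:NC:degrees} hold and $(\ad W)^4(V)\ne0$, the desired contradiction; note that only the hypothesis $(\ad W)^4(V)=0$ is actually consumed.

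The main obstacle is purely the bookkeeping in condition~(2): the clean point is that, once condition~(3) is imposed, the single choice $r_1=r_2$ automatically lands outside the forbidden set, so no separate search for $r_1$ is required. The only other thing that must be recorded explicitly is the elementary fact that a non-trivial automorphism of $\Aff(5,4)$ has at most one fixed point, since this is exactly the feature separating $\Aff(5,4)$ from the quandles already eliminated in Lemma~\ref{lem:123^A4}.
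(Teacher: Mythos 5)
Your proof is correct, but it takes a genuinely different route from the paper's. The paper first spends most of its effort exploiting the hypothesis $(\ad V)^2(W)=0$: via Lemma~\ref{lem:NC:varphi_h} and Lemma~\ref{lem:NC:2generated} it shows $|\Og |\in\{2,3\}$, eliminates $|\Og |=3$ by playing the relation $xyxyx=yxyxy$ (for generators $x,y$ of $\Oh \simeq\Aff(5,4)$) against the two distinct commuting transpositions $\varphi_x|_{\Og }$, $\varphi_y|_{\Og }$, and only then concludes that $\varphi_g|_{\Oh }$ is fixed-point free before choosing $r_1,r_2,r_3$ for Corollary~\ref{cor:NC:degrees}. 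You replace all of this by the single observation that $\varphi_g|_{\Oh }$ is a non-trivial quandle automorphism of $\Aff(5,4)$ (non-trivial because $gh\ne hg$ gives $g\trid h\ne h$) and hence, since any two distinct elements generate $\Aff(5,4)$, fixes at most one point; the resulting four moved points leave enough room for the choice $r_1=r_2$ and $r_3\in M\setminus\{r_2,\varphi_g(r_2),\varphi_g^{-1}(r_2)\}$, and your verification of conditions (1)--(4) in the affine coordinates $i\trid j=2i-j$ is correct (every exclusion not involving $\varphi_g$ reduces to $r_2\ne r_3$ because $2$ and $3$ are invertible modulo $5$, and the two involving $\varphi_g^{-1}$ are exactly what the choice of $r_3$ and $r_2\in M$ guarantee). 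The application of Corollary~\ref{cor:NC:degrees} with the roles of $V$ and $W$ interchanged is legitimate, exactly as in the paper's own proofs of this lemma and of Lemma~\ref{lem:NC:Oh_is_decomposable}. Your argument is shorter, bypasses Lemma~\ref{lem:NC:2generated} entirely, and, as you observe, consumes only $(\ad W)^4(V)=0$ together with the standing assumption $gh\ne hg$, so it proves a marginally stronger statement; what it gives up is the intermediate conclusion $|\Og |=2$, which the paper does not reuse elsewhere.
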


\begin{proof}
	Assume that $\Oh\simeq\Aff(5,4)$.  Then
	$\Oh$ can be generated by two elements $x$ and $y$, $x\ne y$.
  By Lemma~\ref{lem:NC:varphi_h}\eqref{it:phih_transp}, $\varphi _x|_{\Og }$
  and $\varphi _y|_{\Og }$ are transpositions.
	By Lemma \ref{lem:NC:2generated}, either $|\Og |=2$ or $|\Og |=3$, $\varphi
  _x|_{\Og }\not=\varphi _y|_{\Og }$.
	Assume the second case. Let $z\in \Og $ such that $x\trid z\not=z$, $y\trid
  z\not=z$. Then $x\trid z\not=y\trid z$,
  $ x\trid (y\trid z)=y\trid z$, $y\trid (x\trid z)=x\trid z$. Therefore
  $$ xyxyx\trid z=y\trid z\not=x\trid z=yxyxy\trid z,$$
  a contradiction to $xyxyx=yxyxy$ in $G$.
  Hence $|\Og |=2$.

	Now $g\trid z\not=z$ for all $z\in \Oh $ and therefore we
	may assume that $g^3\triangleright h\ne h$. Moreover, for all $z_1,z_2\in
  \Oh $ there exists $z\in \Oh $ such that $z\trid z_1=z_2$. So let
  $r_2\in\Oh$ such that $r_2\triangleright h=g\triangleright h$ and let $r_3=h$.
  Since $ghgh=hghg$ by Lemma~\ref{lem:NC:varphi_h}\eqref{it:ghgh},
  we conclude that $r_2\trid r_3\ne r_3$, $g\trid r_2\not=r_2$,
  $g\trid r_2\not=r_3$ since
  $$ (g\trid r_2)\trid (g\trid h)=g\trid (r_2\trid h)=g^2\trid h\ne
  g^{-1}\trid h=h\trid (g\trid h),$$
  $g\trid r_3\not\in\{r_2,r_3\}$. Moreover, $r_3r_2\trid r_3=hg\trid
  h=g^{-1}\trid r_3$, and hence there exists $r_1\in \Oh \setminus
  \{r_3r_2\trid r_3,r_3\trid r_2,r_3,g^{-1}\trid r_2,g^{-1}\trid r_3\}$.
  Since $r_1\trid g\not=g$, Corollary~\ref{cor:NC:degrees} with $s=g$
  implies that $(\ad W)^4(V)\ne 0$. This is a contradiction and hence
	$\Oh\not\simeq\Aff(5,4)$.
\end{proof}

\begin{lem}
	\label{lem:12^S4}
	Assume that $(\ad V)^2(W)=0$ and $(\ad W)^4(V)=0$. Then $\Oh$ is neither
	isomorphic to $(1234)^{\Sym_4}$ nor to $(12)^{\Sym_4}$.
\end{lem}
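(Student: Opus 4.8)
The plan is to argue by contradiction: granting the consequences of $(\ad V)^2(W)=0$ collected in Lemma~\ref{lem:NC:varphi_h}, I would suppose $\Oh\simeq(12)^{\Sym_4}$ or $\Oh\simeq(1234)^{\Sym_4}$ and then produce elements $r_1,r_2,r_3\in\Oh$ for which Corollary~\ref{cor:NC:degrees}, applied with the roles of $V$ and $W$ interchanged and with $s=g$, yields $(\ad W)^4(V)\ne0$, contradicting the hypothesis. This is the mechanism already used to exclude $\Aff(5,4)$ in Lemma~\ref{lem:NC:affine5}. The whole difficulty is to control the permutation $\varphi_g|_{\Oh}$ well enough to meet the hypotheses of the corollary, especially the condition $g\trid r_2,\,g\trid r_3\notin\{r_2,r_3\}$, which fails precisely when $\varphi_g|_{\Oh}$ is a $3$-cycle.

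The key observation is that the elements of each of the two quandles split into three commuting pairs: in $(12)^{\Sym_4}$ each transposition commutes only with the disjoint one, and in $(1234)^{\Sym_4}$ each $4$-cycle commutes only with its inverse. Hence for such a commuting pair $x,y\in\Oh$ the hypotheses of Lemma~\ref{lem:commuting_elements} hold, since $y$ moves every $z\in\Oh\setminus\{x,y\}$; therefore $\varphi_x|_{\Og}=\varphi_y|_{\Og}$. Combined with Lemma~\ref{lem:NC:varphi_h}\eqref{it:phih_transp}, this shows that the transpositions $\varphi_s|_{\Og}$, $s\in\Oh$, take at most three distinct values, forming a conjugation-closed set; such a set is either a single transposition or the three transpositions of a common three-element set. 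Since $\langle\Oh\rangle$ acts transitively on $\Og$ by Lemma~\ref{lem:NC:varphi_h}\eqref{it:Oggeng}, the first case forces $|\Og|=2$ and the second forces $|\Og|=3$. In particular $|\Og|\le3$, which eliminates the dangerous configuration $|\Og|=4$ in which $\varphi_g|_{\Oh}$ would be a $3$-cycle.

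With $|\Og|\in\{2,3\}$ I would then read off the shape of $\varphi_g|_{\Oh}$: since $\Og\cup\Oh$ is a crossed set, $\varphi_g$ fixes exactly those $s$ whose image $\varphi_s|_{\Og}$ fixes $g$, so $\varphi_g|_{\Oh}$ moves all six elements when $|\Og|=2$ and exactly four of them when $|\Og|=3$; in neither case is it a $3$-cycle. One can then choose $r_2,r_3$ among the moved elements so that $g\trid r_2,\,g\trid r_3\notin\{r_2,r_3\}$ and $r_2\trid r_3\ne r_3$, and finally choose $r_1\in\Oh$ avoiding the five values forbidden in Corollary~\ref{cor:NC:degrees}(2) while satisfying its condition (4); this leaves room because $|\Oh|=6$. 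Corollary~\ref{cor:NC:degrees} then gives $(\ad W)^4(V)\ne0$, the desired contradiction. I expect the main obstacle to be this last, purely combinatorial step: verifying in all four cases ($\Oh\in\{(12)^{\Sym_4},(1234)^{\Sym_4}\}$, $|\Og|\in\{2,3\}$) that a single $r_1$ can simultaneously dodge the five forbidden elements and meet condition (4), which forces a careful case-by-case inspection of the explicit multiplication tables, even though the bound $6>5$ guarantees a candidate exists.
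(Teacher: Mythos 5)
Your overall strategy is the paper's: assume $\Oh\simeq(12)^{\Sym_4}$ or $(1234)^{\Sym_4}$, bound $|\Og|$ by comparing the transpositions $\varphi_y|_{\Og}$, and then feed suitable $r_1,r_2,r_3\in\Oh$ and $s=g$ into Corollary~\ref{cor:NC:degrees}. Your use of Lemma~\ref{lem:commuting_elements} on the three commuting pairs of $\Oh$ is legitimate in both cases (in each quandle $\varphi_y$ fixes only $y$ and its unique partner), and combined with the transitivity of $\langle\Oh\rangle$ on $\Og$ it does yield $|\Og|\in\{2,3\}$, essentially as the paper gets from Lemma~\ref{lem:NC:2generated}. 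Up to that point the argument is sound.

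The gap is in the final step, and it is not just the bookkeeping you flag. The fact the paper proves and you do not is that for $s\in\Og$ with $s\trid r_3\ne r_3$ one has $s\trid r_3=x$ and $s\trid x=r_3$, where $x$ is the unique element of $\Oh\setminus\{r_3\}$ commuting with $r_3$. (The paper deduces this because $r_3^2$, resp.\ $r_3x$, centralizes $\Og$ --- by Lemma~\ref{lem:NC:varphi_h}\eqref{it:ghgh}, resp.\ because $\varphi_{r_3}|_{\Og}=\varphi_x|_{\Og}$ --- so $s\trid r_3$ is a fixed point of $\varphi_{r_3}^2|_{\Oh}$, resp.\ of $\varphi_{r_3}\varphi_x|_{\Oh}$, and in both quandles these permutations fix only $r_3$ and $x$.) Without this, condition (3) of Corollary~\ref{cor:NC:degrees} is not merely tedious but can a priori be unsatisfiable: one needs $r_2$ moved by $\varphi_s$ with $r_2\notin\{r_3,\,x,\,s\trid r_3,\,s^{-1}\trid r_3\}$, and in your $|\Og|=3$ case $\varphi_s|_{\Oh}$ moves only four elements, which necessarily include $r_3$, $x$ (since $\varphi_x|_{\Og}=\varphi_{r_3}|_{\Og}$ moves $s$), $s\trid r_3$ and $s^{-1}\trid r_3$. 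If these were four distinct elements, no admissible $r_2$ would exist. The identity $s\trid r_3=s^{-1}\trid r_3=x$ collapses that forbidden set to $\{r_3,x\}$, leaving two choices of $r_2$; it also places $x=s^{-1}\trid r_3$ among the five elements forbidden to $r_1$, which is why condition (4) then holds automatically. Your count ``$6>5$'' only produces an $r_1$ satisfying condition (2); it says nothing about the constraint on $r_2$, and the case analysis you defer is exactly where the argument would otherwise break. To close the proof you should establish the swap $s\trid r_3=x$, $s\trid x=r_3$ as above, and also verify (as the paper does) that $\varphi_s|_{\Oh}$ moves some element outside $\{r_3,x\}$ so that $r_2$ exists.
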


\begin{proof}
  Assume that $\Oh \simeq (1234)^{\Sym_4}$ or $\Oh \simeq (12)^{\Sym_4}$.
  Let $r_3\in \Oh $, $s\in \Og $ with $s\trid r_3\not=r_3$
  and let $x\in \Oh \setminus \{r_3\}$ with $r_3\trid x=x$. It suffices to
  show that $s\trid r_3=x$, $s\trid x=r_3$, and
  $\varphi _s|_{\Oh }\not=(x\,r_3)$.
  Indeed, let $r_2\in \Oh \setminus \{r_3,x\}$ with $s\trid r_2\not=r_2$ and
  let $r_1\in \Oh \setminus \{r_3r_2\trid r_3,r_3\trid r_2,r_3,
  s^{-1}\trid r_3,s^{-1}\trid r_2\}$. Then $r_1\trid r_3\not=r_3$
  since $r_1\not=r_3$ and $r_1\not=s^{-1}\trid r_3=x$, and hence
  Corollary~\ref{cor:NC:degrees} contradicts to $(\ad W)^4(V)=0$.

  Now we show that $s\trid r_3=x$, $s\trid x=r_3$.
  First, $\varphi _{r_3}|_{\Og }$ and $\varphi _x|_{\Og }$ are transpositions
  by Lemma~\ref{lem:NC:varphi_h}\eqref{it:phih_transp}.
  If $\Oh \simeq (1234)^{\Sym _4}$ then $r_3^2\trid (s\trid r_3)=s\trid r_3$
  and $\varphi _{r_3}^2|_{\Oh }$ has only $r_3$ and $x$ as fixed points.
  Hence $s\trid r_3=x$ and similarly $s\trid x=r_3$.
  If $\Oh \simeq (12)^{\Sym _4}$ then
  Lemma~\ref{lem:commuting_elements} implies that
  $\varphi _{r_3}|_{\Og }=\varphi _x|_{\Og }$.
  Hence $r_3x\trid (s\trid r_3)=s\trid r_3$.
  Since $\varphi _{r_3}\varphi _x|_{\Oh}$ has only $r_3$ and $x$ as fixed
  points, we conclude that $s\trid r_3=x$ and similarly $s\trid x=r_3$.

	Now we show that there exists $y\in \Oh \setminus \{r_3,x\}$ such that
	$s\trid y\not=y$. If $\Oh \simeq (1234)^{\Sym_4}$ then
	Lemma~\ref{lem:NC:2generated} implies that $|\Og |\le 3$ and the claim holds.
	If $\Oh \simeq (12)^{\Sym_4}$ then let $z\in \Oh \setminus \{r_3,x\}$. Then
	$r_3,x$ and $z$ generate $\Oh $ as a quandle.  Recall that $\varphi
  _{r_3}|_{\Og }=\varphi _x|_{\Og }=(s\,x\trid s)$. If $\varphi _z|_{\Og
  }=(a\,b)$ with $|\{s,x\trid s,a,b\}|=4$ then $|\Og |=2$
  by a calculation similar to
	\eqref{eq:Og} of Lemma \ref{lem:NC:2generated}, a contradiction.
  Otherwise $|\Og |\le 3$ as in the proof of
  Lemma~\ref{lem:NC:2generated}. Then again $y\trid s\not=s$ for four or six
  elements $y\in \Oh $.
\end{proof}

\begin{pro}
	\label{pro:NC}
	Assume that $(\ad V)^2(W)=0$
	and $(\ad W)^4(V)=0$.  Then $\Og\cup\Oh$ is isomorphic to $Z_2^{2,2}$, $Z_3^{3,2}$ or
	$Z_4^{4,2}$.
\end{pro}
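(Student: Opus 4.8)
The plan is to split the argument according to whether the quandle $\Oh$ is decomposable. If $\Oh$ is decomposable, then the hypotheses $(\ad V)^2(W)=0$ and $(\ad W)^4(V)=0$ are exactly those of Lemma~\ref{lem:NC:Oh_is_decomposable}, which already yields that $\Og\cup\Oh$ is isomorphic to $Z_2^{2,2}$ or $Z_4^{4,2}$. So nothing remains in this case, and the whole burden falls on the indecomposable case.

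Suppose now that $\Oh$ is indecomposable. Since $gh\ne hg$, there exist $r\in\Og$ and $s\in\Oh$ with $r\trid s\ne s$, so $(\ad W)(V)\ne0$ by Remark~\ref{rem:degrees0} (with the roles of $V$ and $W$ interchanged). Interchanging $V$ and $W$ in Corollary~\ref{cor:classification}, the hypotheses ``$\Oh$ indecomposable'', $(\ad W)(V)\ne0$ and $(\ad W)^4(V)=0$ force $\Oh$ to be one of the eight quandles in \eqref{eq:quandles}. I would then eliminate them one at a time: $\Oh\ne\{1\}$ because $gh\ne hg$ gives $|\Oh|\ge2$; by Lemma~\ref{lem:NC:varphi_h}\eqref{it:ghgh} we have $h^2\trid g=g$, so Lemma~\ref{lem:123^A4} excludes $(123)^{\Alt_4}$, $\Aff(5,2)$ and $\Aff(5,3)$; Lemma~\ref{lem:NC:affine5} excludes $\Aff(5,4)$; and Lemma~\ref{lem:12^S4} excludes $(12)^{\Sym_4}$ and $(1234)^{\Sym_4}$. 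This leaves only $\Oh\simeq(12)^{\Sym_3}$.

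It remains to pin down $\Og$ and the cross action, and this is where I expect the real work. Any two distinct elements of $(12)^{\Sym_3}$ generate it as a quandle, so Lemma~\ref{lem:NC:2generated} gives $|\Og|\in\{2,3\}$, and the main obstacle is to rule out $|\Og|=3$. The idea is to use the crossed-set axiom (valid since $\Og\cup\Oh$ is a union of conjugacy classes) to transport fixed-point data between $\Og$ and $\Oh$. If $|\Og|=3$, then by Lemma~\ref{lem:NC:2generated} two generators $x,y$ of $\Oh$ give non-commuting transpositions $\varphi_x|_{\Og}$, $\varphi_y|_{\Og}$ of the three-element set $\Og$, so the elements of $\Oh$ fix pairwise different points of $\Og$; the crossed-set equivalence $x\trid z=z\Leftrightarrow z\trid x=x$ then forces different elements of $\Og$ to fix different points of $\Oh$, so the transpositions $\varphi_{g'}|_{\Oh}$ with $g'\in\Og$ have pairwise different fixed points. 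But $\Og$ is commutative by Lemma~\ref{lem:NC:varphi_h}\eqref{it:Ogcomm}, so these permutations pairwise commute, and commuting transpositions in $\Sym_3$ must coincide --- a contradiction. (One must also check the degenerate possibility that some $\varphi_{g'}|_{\Oh}$ is the identity, which is excluded because $g'$ would then commute with all of $\Oh$, collapsing $\Og=\langle\Oh\rangle\trid g$ to a single point.)

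Hence $|\Og|=2$. Then $\varphi_s|_{\Og}$ is the unique nontrivial transposition for every $s\in\Oh$ by Lemma~\ref{lem:NC:varphi_h}\eqref{it:phih_transp}, so every $s\in\Oh$ moves $g$. By the crossed-set axiom a fixed point $s$ of $\varphi_g$ would satisfy $g\trid s=s$, hence $s\trid g=g$, contradicting that $\varphi_s|_{\Og}$ moves $g$; thus $\varphi_g|_{\Oh}$ is fixed-point-free. It is also nontrivial, since $g\trid h\ne h$, and a fixed-point-free nontrivial element of $\Sym_3$ is a $3$-cycle. Therefore $\Og$ (of size two, commutative) acts on $\Oh\simeq(12)^{\Sym_3}$ by a $3$-cycle and its inverse, while $\Oh$ acts on $\Og$ by the transposition; matching these permutations against those defining $Z_3^{3,2}$ identifies $\Og\cup\Oh\simeq Z_3^{3,2}$, which completes the proof.
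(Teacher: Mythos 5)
Your proof is correct and follows essentially the same route as the paper's: split on decomposability of $\Oh$, dispose of the decomposable case by Lemma~\ref{lem:NC:Oh_is_decomposable}, and in the indecomposable case use Corollary~\ref{cor:classification} together with Lemmas~\ref{lem:123^A4}, \ref{lem:NC:affine5} and \ref{lem:12^S4} to reduce to $\Oh\simeq(12)^{\Sym_3}$, then apply Lemma~\ref{lem:NC:2generated}. (The paper treats ``$\Oh$ commutative'' as a separate first case via Lemma~\ref{lem:OgUOh=D4}; absorbing it into the decomposable case as you do is legitimate, since a commutative crossed set with at least two elements is decomposable and $|\Oh|=1$ contradicts $gh\ne hg$.) The one place you genuinely diverge is the exclusion of $|\Og|=3$: the paper computes $(g\trid h)\trid g=g\trid (h\trid g)=h\trid g$ from the commutativity of $\Og$, concludes $\varphi_{g\trid h}|_{\Og}=\varphi_h|_{\Og}=(g\ h\trid g)$ by Lemma~\ref{lem:NC:varphi_h}\eqref{it:phih_transp}, and contradicts Lemma~\ref{lem:NC:2generated} directly, whereas your fixed-point-counting argument -- transporting fixed points between $\Og$ and $\Oh$ via the crossed-set axiom and then using that commuting transpositions of a three-element set coincide -- is longer but equally valid; your final identification of $Z_3^{3,2}$ when $|\Og|=2$ coincides with the paper's.
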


\begin{proof}
	First, $\Og\not=\Oh $ by Lemma~\ref{lem:NC:varphi_h}\eqref{it:OgOh}.
  If $\Oh $ is
	commutative then $g^G\cup h^G\simeq Z_2^{2,2}$ by Lemma \ref{lem:OgUOh=D4}.  If $\Oh$
	is decomposable then $g^G\cup h^G\simeq Z_2^{2,2}$ or $g^G\cup h^G\simeq
	Z_4^{4,2}$ by Lemma~\ref{lem:NC:Oh_is_decomposable}. Finally, suppose that
	$h^G$ is non-commutative and indecomposable. Then
	Corollary~\ref{cor:classification} implies that $\Oh$ is isomorphic to one of
	the non-commutative quandles of \eqref{eq:quandles}.
  Since $h^2\trid g=g$ by Lemma \ref{lem:NC:varphi_h}\eqref{it:ghgh},
  Lemmas \ref{lem:123^A4},
	\ref{lem:NC:affine5}, and \ref{lem:12^S4} imply that
	$\Oh\simeq(12)^{\Sym_3}$. Then $|\Og |=2$ or $|\Og |=3$ by Lemma
	\ref{lem:NC:2generated} and $\Og $ is commutative by
  Lemma~\ref{lem:NC:varphi_h}\eqref{it:Ogcomm}. If $|\Og |=2$
  then $g\trid x\not=x$ for all $x\in \Oh $ and hence $\varphi _g$
  is a three-cycle and $\varphi _{h\trid
	g}=\varphi _h\varphi _g\varphi _h^{-1}=\varphi _g^{-1}$.
  Thus $g^G\cup h^G\simeq Z_3^{3,2}$.  If $|\Og |=3$
	then $(g\trid h)\trid g=g\trid (h\trid g)=h\trid g$. Then
	Lemma~\ref{lem:NC:varphi_h}\eqref{it:phih_transp} implies that $\varphi
	_{g\trid h}|_{\Og }=\varphi _h|_{\Og }=(g\,h\trid g)$, a contradiction to
	Lemma~\ref{lem:NC:2generated} and $|\Og |=3$.
\end{proof}

\subsection{The proof of Theorem \ref{thm:quandles_and_groups}} 
\label{subsection:proof}

	Let $g\in \supp V$, $h\in \supp W$. Then $\supp V=\Og $, $\supp W=\Oh $ by
	assumption.  Let $X=\Og\cup\Oh$.  If $\Og $ and $\Oh $ commute then $X\simeq
	Z_3^{3,1}$ or $X\simeq Z_T^{4,1}$ by Proposition \ref{pro:COMM}.  Otherwise,
	$\Og$ and $\Oh$ do not commute and $X\simeq Z_2^{2,2}$ or $X\simeq Z_4^{4,2}$
	or $X\simeq Z_3^{3,2}$ by Proposition \ref{pro:NC}.
	The enveloping groups of the quandles $Z_T^{4,1}$, $Z_2^{2,2}$, $Z_3^{3,1}$,
	$Z_3^{3,2}$ and $Z_4^{4,2}$ were computed in \S\ref{section:groups}. Hence
	the theorem follows from the universal property of the enveloping group, see
	Remark \ref{rem:universal_property}.  \qed

\def\cprime{$'$}

\end{document}